\documentclass{amsart}

\usepackage{hyperref}
\hypersetup{
    colorlinks=true,
    linkcolor=red,
    urlcolor=pink,
    citecolor=blue
}
\usepackage{amssymb}
\usepackage{amsmath}
\usepackage{amsthm}
\usepackage{enumerate}
\usepackage{graphicx}
\usepackage{color}
\usepackage{subcaption}
\usepackage{float}

\usepackage{tikz}

\theoremstyle{plain}

\makeatletter
\newtheorem*{rep@theorem}{\rep@title}
\newcommand{\newreptheorem}[2]{%
\newenvironment{rep#1}[1]{%
 \def\rep@title{#2 \ref{##1}}%
 \begin{rep@theorem}}%
 {\end{rep@theorem}}}
\makeatother

\newtheorem{theorem}{Theorem}[section]
\newreptheorem{theorem}{Theorem}
\newtheorem{proposition}[theorem]{Proposition}

\newtheorem{lemma}[theorem]{Lemma}
\newtheorem{corollary}[theorem]{Corollary}
\newtheorem{question}[theorem]{Question}

\theoremstyle{definition}
\newtheorem{definition}[theorem]{Definition}

\theoremstyle{remark}
\newtheorem*{remark}{Remark}

\newcommand{\mC}{\mathbb{C}}
\newcommand{\mR}{\mathbb{R}}
\newcommand{\mN}{\mathbb{N}}

\newcommand{\mD}{\mathbb{D}}

\newcommand{\hC}{\widehat{\mathbb{C}}}

\begin{document}

\title{Holomorphic motions, Assouad dimension and quasiconformal mappings}

\date{September 15, 2026}

\author[K. Menssen]{Katheryn Menssen}
\address{Department of Mathematics, University of Hawaii Manoa, Honolulu, HI 96822, USA.}
\email{menssenk@hawaii.edu}

\author[M. Younsi]{Malik Younsi}
\address{Department of Mathematics, University of Hawaii Manoa, Honolulu, HI 96822, USA.}
\email{malik.younsi@gmail.com}

\keywords{Holomorphic motion, Assouad dimension, quasi-Assouad dimension, quasiconformal mapping, harmonic function, quasicircle.}
\subjclass[2010]{primary 37F44  secondary 30C62, 31A05, 28A78. }

\thanks{M.Y. was supported by NSF Grant DMS-2350530.}

\begin{abstract}
We study the variation of the quasi-Assouad dimension of a set moving under a holomorphic motion. We show that the reciprocal of the quasi-Assouad dimension is inf-harmonic in the sense of Fuhrer--Ransford--Younsi. As a consequence, we obtain quasiconformal distortion bounds for quasi-Assouad dimension as well as an improved version of Smirnov's celebrated theorem on the dimension of quasicircles. Our approach is elementary in that it does not require optimal Sobolev regularity for quasiconformal mappings.
\end{abstract}

\maketitle

\section{Introduction}
\label{sec1}

In the following we denote by $\mD, \mC$ and $\hC$ the open unit disk, the complex plane and the Riemann sphere, respectively.

\begin{definition}
\label{def:motion}
A \textit{holomorphic motion} of a set $E \subset \hC$ is a map $f: \mD \times E \to \hC$ such that:

\begin{enumerate}[(i)]
\item for each fixed $z \in E$, the map $\lambda \mapsto f(\lambda,z)$ is holomorphic on $\mD$;
\item for each fixed $\lambda \in \mD$, the map $z \mapsto f(\lambda,z)$ is injective on $E$;
\item $f(0,z)=z$ for all $z \in E$.
\end{enumerate}
For $\lambda \in \mD$ we define $f_\lambda: E \to \hC$ by $f_\lambda(z):=f(\lambda,z)$. We also set $E_\lambda:=f_\lambda(E)$.
\end{definition}

Holomorphic motions were introduced by Ma\~{n}\'{e}, Sad and Sullivan \cite{MSS83} in 1983 to describe how Julia sets of hyperbolic rational maps change when the coefficients vary holomorphically. Closely related to this is the work of Ruelle \cite{Ru82}, one of the landmarks of thermodynamic formalism, which established the fact that in the hyperbolic regime the Hausdorff dimension of Julia sets varies real analytically. Holomorphic motions have since been applied in various other areas of dynamical systems, for instance to describe the variation of limit sets of Kleinian groups (see e.g. \cite[\S12.2.1]{AIM09}). See also \cite{Tr17} for applications to the Loewner differential equation.

In the seminal article \cite{MSS83}, Ma\~{n}\'{e}, Sad and Sullivan proved the so-called $\lambda$-lemma, which states that every holomorphic motion $f: \mD \times E \to \hC$ can be extended to a holomorphic motion $F: \mD \times \overline{E} \to \hC$ such that $F$ is jointly continuous in $(\lambda,z)$. This was later generalized by Slodkowski in \cite{Sl91} who showed that every holomorphic motion extends to a holomorphic motion of the whole Riemann sphere, confirming a conjecture of Sullivan and Thurston \cite{ST86}. In this article we will therefore focus on holomorphic motions $f:\mD \times \hC \to \hC$. We shall also assume that each $f_\lambda: \hC \to \hC$ fixes $0,1$ and $\infty$, composing with a M\"{o}bius transformation if necessary.

As already observed in \cite{MSS83}, holomorphic motions of $\hC$ are closely related to quasiconformal mappings. Namely, a function $f:\mD \times \hC \to \hC$ is a holomorphic motion if and only if for each $\lambda \in \mD$ the map $f_\lambda: \hC \to \hC$ is $|\lambda|$-quasiconformal with Beltrami coefficient $\mu_\lambda$ depending analytically on $\lambda$. An important consequence is that every quasiconformal mapping can be embedded in a holomorphic motion in a natural way: if $F:\hC \to \hC$ is a $k$-quasiconformal mapping fixing $0,1,\infty$, then $F=\left.f_\lambda\right|_{\lambda=k}$ for some holomorphic motion $f:\mD \times \hC \to \hC$. In particular, the properties of quasiconformal mappings can be deduced from the properties of holomorphic motions. This viewpoint has proven over the years to be extremely fruitful for the study of the distortion properties of quasiconformal mappings; notable examples include the celebrated work of Astala \cite{As94} on quasiconformal distortion of area and dimension as well as the subsequent works of Eremenko--Hamilton \cite{EH95} and Smirnov \cite{Sm10}.

In the study of the properties of holomorphic motions, a significant number of recent advances have focused on the following question:

\begin{question}
\label{question:motions}
Let $\mathcal{F}$ be a real-valued function defined on the collection of all subsets of $\mathbb{C}$. If $f:\mD \times \hC \to \hC$ is a holomorphic motion and if $E$ is a subset of $\mC$, what is the behavior of the function
$$\lambda \mapsto \mathcal{F}(E_\lambda) \qquad (\lambda \in \mD)?$$
\end{question}

Question \ref{question:motions} has been studied extensively for various choices of the set-function $\mathcal{F}$. For instance, it was shown by Ransford, the second author and Ai in \cite{RYA20} that if $\mathcal{F}$ is logarithmic capacity, then the function $\lambda \mapsto \mathcal{F}(E_\lambda)$ is continuous. This remains true for condenser capacity \cite{Po21} but not for analytic capacity \cite{RYA20} or continuous analytic capacity \cite{Yo25}.

We mention, however, that the case where $\mathcal{F}$ is a notion of dimension has received the most attention. Historically, the study of the variation of the Hausdorff dimension of dynamical sets under holomorphic motions has led to important developments in various areas of dynamical systems. This includes the aforementioned classical work of Ruelle \cite{Ru82} as well as the work of Ransford \cite{Ra93}, both devoted to Julia sets of hyperbolic rational maps. The behavior of Hausdorff dimension under holomorphic motions was also studied by Astala and Zinsmeister in \cite{AZ95} for limit sets of Fuchsian groups. See also the work of Zakeri \cite{Za16} for other applications to holomorphic dynamics. All of these results, however, concern Question \ref{question:motions} for specific holomorphic motions or specific sets. The study of the variation of the dimension of arbitrary planar sets under arbitrary holomorphic motions was instigated by Fuhrer, Ransford and the second author \cite{FRY23}. Before discussing the results in more detail, we need to introduce a definition.

\begin{definition}
\label{def:infh}
Let $D$ be a domain in $\mC$. A positive function $u:D \to [0,\infty)$ is called \textit{inf-harmonic} if there exists a family $\mathcal{H}$ of positive harmonic functions on $D$ such that $u(\lambda) = \inf_{h \in \mathcal{H}} h(\lambda)$ for all $\lambda \in D$.
\end{definition}

The work \cite{FRY23} addresses not only Hausdorff dimension $\dim_H$ but also (upper) Minkowski dimension $\overline{\dim}_M$ and Packing dimension $\dim_P$. The first main result of \cite{FRY23} is the following, where $\dim$ denotes either $\overline{\dim}_M$ or $\dim_P$.

\begin{theorem}[Fuhrer--Ransford--Younsi]
\label{theorem:FRY1}
Let $f: \mD \times \hC \to \hC$ be a holomorphic motion and let $E$ be a subset of $\mC$, which we assume to be bounded in the case $\dim=\overline{\dim}_M$. Then either $\operatorname{dim}(E_\lambda)=0$ for all $\lambda \in \mD$, or $\lambda \mapsto 1/\operatorname{dim}(E_\lambda)$ is an inf-harmonic function on $\mD$.
\end{theorem}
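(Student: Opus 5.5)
Our plan is to write $1/\dim(E_\lambda)$ as an infimum of positive harmonic functions, one for each "covering configuration" of $E$. We treat upper Minkowski dimension first, for bounded $E$, and then pass to packing dimension through the standard formula
$$\dim_P(E)=\inf\Bigl\{\sup_j \overline{\dim}_M(E_j): E\subset \textstyle\bigcup_j E_j,\ E_j \text{ bounded}\Bigr\}.$$
Since $f_\lambda$ is a bijection, coverings of $E$ correspond to coverings of $E_\lambda$. We then have $1/\dim_P(E_\lambda)=\sup_{\{E_j\}}\inf_j 1/\overline{\dim}_M(f_\lambda(E_j))$. A supremum over coverings of an infimum of inf-harmonic functions is not obviously inf-harmonic, so we handle this step by Harnack-type compactness, as described below. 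The core of the proof is the Minkowski case.

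For $\overline{\dim}_M$ we use the fact that it equals the critical exponent of Whitney-type or ball coverings. For $\delta>0$ and $s>0$, the covering number condition $N(E_\lambda,\delta)\le C\delta^{-s}$ is what must be encoded holomorphically. The key tool is a holomorphic-motion distortion estimate for small scales. We would prove it as follows: for $z,w$ close and fixed, the function $\lambda\mapsto \log|f_\lambda(z)-f_\lambda(w)|$ is harmonic, since $f_\lambda(z)\ne f_\lambda(w)$ by injectivity. After normalizing by a reference distance, this function is essentially controlled by the harmonic-measure / Harnack bound of the $\lambda$-lemma, namely the $|\lambda|$-quasisymmetry with constants $\frac{1+|\lambda|}{1-|\lambda|}$.

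We then look at scales. Fix $\lambda_0$ and consider the ratio
$$h_{z,w}(\lambda)=\frac{\log|f_\lambda(z)-f_\lambda(w)|}{\log|z-w|}\quad\text{as } |z-w|\to 0.$$
Limits of such ratios are positive harmonic functions. This is Harnack plus the normal-family property, with positivity coming from the Hölder continuity of quasiconformal maps, $|f_\lambda(z)-f_\lambda(w)|\le C|z-w|^{(1-|\lambda|)/(1+|\lambda|)}$.

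Now suppose $\overline{\dim}_M(E_{\lambda_0})<s$. Then at a sequence of scales $\delta_n\to0$, $E$ is covered by $\delta_n^{-s+o(1)}$ preimages of small discs. Pushing these to other $\lambda$ gives the bound
$$\overline{\dim}_M(E_\lambda)\le s\cdot\limsup_n \frac{1}{h_n(\lambda)},$$
where $h_n$ is the harmonic exponent relating the scale at $\lambda$ to the scale at $\lambda_0$. Taking subsequential limits yields a positive harmonic function $h$ with $1/\dim(E_\lambda)\le h(\lambda)/s\cdot(\ldots)$ for all $\lambda$, and with equality at $\lambda_0$ as $s\downarrow\dim(E_{\lambda_0})$. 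Hence $1/\dim(E_\lambda)=\inf_{\lambda_0,s}h_{\lambda_0,s}(\lambda)/s$, which is inf-harmonic. If $\dim(E_{\lambda_0})=0$ for one $\lambda_0$, the same inequality with $s\to0$ forces $\dim(E_\lambda)=0$ everywhere, giving the dichotomy.

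The main obstacle is making the scale exponent uniform over all the discs in the covering, rather than working pair by pair. We need the whole family of balls at scale $\delta$ to be mapped to sets of comparable diameter $\approx\delta^{h(\lambda)}$. For this we would use quasisymmetry: images of balls are quasiballs. We would then apply Harnack's inequality to the harmonic functions $\lambda\mapsto\log\operatorname{diam} f_\lambda(B)/\log\delta$, which are uniformly controlled over all $B$. A compactness/diagonal argument then extracts a single harmonic minorant. We expect this is also what makes the packing-dimension supremum step work.
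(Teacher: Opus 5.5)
Your Minkowski-dimension argument, which you rightly treat as the core, runs in the wrong direction and rests on a uniformity claim that is false.

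To show that $1/\dim(E_\lambda)$ is inf-harmonic you need, for each $\lambda_0$, an (inf-)harmonic \emph{majorant} $u\ge 1/\dim(E_\cdot)$ with $u(\lambda_0)=1/\dim(E_{\lambda_0})$. In other words you need \emph{lower} bounds $\dim(E_\lambda)\ge 1/u(\lambda)$. Coverings at $\lambda_0$ transported to $\lambda$ only give upper bounds on $\dim(E_\lambda)$. Your own display $\overline{\dim}_M(E_\lambda)\le s\limsup_n 1/h_n(\lambda)$ gives $1/\overline{\dim}_M(E_\lambda)\ge \liminf_n h_n(\lambda)/s$, which is the reverse of the inequality you then assert. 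Carried through, your scheme would express $1/\dim(E_\lambda)$ as a supremum of harmonic functions, hence as a subharmonic function, and that is false in general.

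Take the holomorphic motion $f_\lambda(z)=z|z|^{2\lambda/(1-\lambda)}$, which fixes $0,1,\infty$, and $E=\{1/n\}_{n\ge1}\cup\{1+1/n\}_{n\ge1}$. Write $P(\lambda)=\operatorname{Re}\frac{1+\lambda}{1-\lambda}$. One finds $1/\overline{\dim}_M(E_\lambda)=\min\{1+P(\lambda),2\}$. This equals $2$ at $\lambda=0$ but has circle averages strictly less than $2$ about $0$, so it is not subharmonic.

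The same example shows why your ``main obstacle'' cannot be overcome. Discs of radius $\delta$ centred at $0$ become discs of radius $\delta^{P(\lambda)}$, while $\delta$-discs near $1$ keep diameter $\asymp\delta$. Harnack's inequality only traps each ball's exponent between $\frac{1-|\lambda|}{1+|\lambda|}$ and $\frac{1+|\lambda|}{1-|\lambda|}$ (taking $\lambda_0=0$). It cannot make the exponents agree, and no diagonal argument produces a single harmonic $h$ for all balls. Note also that $\lambda\mapsto\log\operatorname{diam}f_\lambda(B)/\log\delta$ is not harmonic. Since $\log\operatorname{diam}$ is a supremum over pairs of points, this quotient is only an infimum of harmonic functions.

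The missing idea, visible in the proof of Lemma~\ref{lemma2} (modelled on \cite[Lemma 5.2]{FRY23}), is to work with \emph{packings} and let every ball keep its own distortion, rather than seeking a common exponent.
\begin{enumerate}
\item Characterize the dimension by packings with balls of varying radii (the Minkowski analogue of Theorem~\ref{theorem:assouad}).
\item Given such a packing $\mathcal{B}$ at $\lambda_0$, set $a_B(\lambda)=\operatorname{diam} f_\lambda\circ f_{\lambda_0}^{-1}(B)/|h(\lambda)|$ with $h$ a holomorphic normalizer. Then $\log(1/a_B)$ is inf-harmonic by Lemma~\ref{lemma1}. This is where your pairwise harmonic functions belong: via an infimum over pairs, not a limit.
\item Define $s(\lambda)$ by $\sum_B a_B(\lambda)^{s(\lambda)}=c$. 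The implicit function theorem for inf-harmonic functions, Theorem~\ref{C:implicit}, makes $1/s$ inf-harmonic. This is exactly what absorbs the ball-to-ball variation.
\item Proposition~\ref{P:infharmonic}(iii) handles the limits, and Corollary~\ref{C:quasisym} turns the image sets into genuine packings of $E_\lambda$. This gives $\dim(E_\lambda)\ge 1/u(\lambda)$ with equality at $\lambda_0$, whence $1/\dim=\inf_{\lambda_0}u_{\lambda_0}$.
\end{enumerate}

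Your reduction from packing to Minkowski dimension is salvageable once the Minkowski case is in hand, but it needs two observations you have not stated. First, countable covers are upward-directed under common refinement, and refining only increases $\inf_j 1/\overline{\dim}_M(f_\lambda(E_j))$. Second, the supremum of an upward-directed family of inf-harmonic functions that is finite at one point is inf-harmonic, by Proposition~\ref{P:infharmonic}(iii) and continuity. ``Harnack-type compactness'' alone does not supply this.
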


For packing dimension this is sharp, in view of the following result.

\begin{theorem}[Fuhrer--Ransford--Younsi]
\label{theorem:FRY2}
Let $d:\mD \to (0,2]$ be a function such that $1/d$ is inf-harmonic on $\mD$. Then there exists a holomorphic motion $f:\mD \times E \to \mC$ of a compact subset $E \subset \mC$ such that
$$\dim_P(E_\lambda)=\dim_H(E_\lambda) = d(\lambda) \qquad (\lambda \in \mD).$$
\end{theorem}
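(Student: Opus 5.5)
The plan is to build $E$ as a self-similar (Cantor-type) set whose generating similarities move holomorphically in $\lambda$. The dimension will then be read off from a Moran-type pressure equation, and the inf-harmonic structure of $1/d$ will be matched by an infimum of harmonic functions.

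First I reduce to a countable family. Since $1/d=\inf_{h\in\mathcal H}h$ with $h$ positive harmonic and $d\le 2$, each $h$ may be replaced by $\max$-free data. By separability of the cone of positive harmonic functions (Harnack), one can extract a countable subfamily $\{h_n\}$ with $1/d=\inf_n h_n$ pointwise; this uses lower semicontinuity of the infimum along a dense subset. Each positive harmonic $h_n$ is the real part of a holomorphic function $g_n$ on $\mD$.

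The basic building block is a holomorphic motion of a self-similar set with dimension exactly $1/h(\lambda)$. Take $N$ maps of the form $z\mapsto r(\lambda)z+c_j$, where
$$r(\lambda)=N^{-g(\lambda)}, \qquad \text{so that}\quad |r(\lambda)|=N^{-h(\lambda)}.$$
Because $h\ge 1/2$, we have $|r|\le N^{-1/2}$. Choosing the translations $c_j$ suitably spread, for instance on a grid of $N$ points in a fixed square with $N$ large, keeps the first-level images disjoint for all $\lambda$. The open set condition then holds uniformly, and the similarity dimension gives
$$\dim_H=\dim_P=\frac{\log N}{-\log|r(\lambda)|}=\frac{1}{h(\lambda)}.$$
The attractor points are given by convergent series $\sum c_{j_k}r(\lambda)^{k}$, which are holomorphic in $\lambda$ and injective by disjointness, so this is a holomorphic motion. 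This construction may require locally restricting $\lambda$, or rescaling $h$ via Harnack, to keep the separation uniform. I expect this to be the main technical obstacle: ensuring separation for all $\lambda\in\mD$ simultaneously when $h$ is unbounded near the boundary. I would handle it by noting that large $h$ only makes $|r|$ smaller, and smaller ratios only improve separation. The only constraint is therefore $h\ge1/2$, which holds automatically.

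Next I combine the pieces. Let $E=\{0\}\cup\bigcup_n E^{(n)}$, where $E^{(n)}$ is a scaled copy of the $n$-th building block placed in a disc $D_n$ accumulating at $0$, for example a scaled copy of radius $4^{-n}$ near $2^{-n}$. The motion is translation by the centers composed with the motions above, and $0$ stays fixed. Injectivity across pieces requires the moving pieces to stay inside disjoint discs, which is ensured by the scaling because each block stays in a fixed bounded set. Countable stability of $\dim_H$ and $\dim_P$ gives
$$\dim(E_\lambda)=\sup_n \frac{1}{h_n(\lambda)}=d(\lambda).$$
The set $E$ is compact since the pieces shrink to $0$. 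Finally, $\dim_H\le\dim_P$ together with equality on each piece gives the required equality for both dimensions.
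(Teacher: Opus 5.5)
\textbf{There is a genuine gap in the building block.} Your countable reduction is fine: a subfamily of $\mathcal H$ that is dense for locally uniform convergence has the same pointwise infimum. The placement of the blocks in disjoint shrinking discs and the appeal to countable stability of $\dim_H$ and $\dim_P$ are also fine. The gap sits exactly at the obstacle you flagged: the claim that ``the only constraint is $h\ge 1/2$'' is false.

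Strong separation forces $N|r(\lambda)|^2<1$. Suppose the pieces $A_j=rA+c_j$ of the attractor $A$ are pairwise disjoint. Then for small $\epsilon>0$ the $\epsilon$-neighbourhood $U$ of $A$ contains the pairwise disjoint sets $rU+c_j$, which are the $|r|\epsilon$-neighbourhoods of the $A_j$. It also contains the nonempty open set of points whose distance to $A$ lies in $(|r|\epsilon,\epsilon)$. Hence $N|r|^2|U|<|U|$.

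Consequences:
\begin{enumerate}
\item Your block cannot exist when $h\equiv 1/2$.
\item The attractor depends continuously on $r$, and the first-level pieces meet for every $r$ with $|r|=N^{-1/2}$. So the gaps between pieces tend to $0$ as $h(\lambda)\to 1/2$.
\item A positive harmonic $h\ge 1/2$ can do this inside $\mD$. For example, $h(\lambda)=\tfrac12+\operatorname{Re}\frac{1+\lambda}{1-\lambda}$ tends to $1/2$ at every boundary point except $1$. For this $h$, $\arg r(\lambda)=-\log N\,\operatorname{Im} g(\lambda)$ takes every value.
\end{enumerate}
So no fixed grid gives uniform separation, and your argument gives no reason for separation at each individual $\lambda$ either. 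Injectivity of the motion needs exactly that. The natural rotation-proof mechanism is to enclose the pieces in discs. That requires $N$ disjoint discs of radius $N^{-h(\lambda)}\rho$ inside a disc of radius $\rho$, i.e.\ $h\ge\tfrac12+c/\log N$ on all of $\mD$.

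Neither remedy you mention helps. Restricting $\lambda$ locally does not give a motion over $\mD$. Rescaling $h$ changes the dimension, which must be exactly $1/h$.

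\textbf{The repair is cheap.} Replace $\{h_n\}$ by $\{h_n+1/m : n,m\in\mN\}$. This family has the same pointwise infimum $1/d$, and each member satisfies $h\ge\tfrac12+\tfrac1m$ on all of $\mD$.

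For such $h$, choose $N=N(m)$ so large that there are centres $c_1,\dots,c_N$ with the closed discs $\overline{\mD}(c_j,N^{-1/2-1/m}\rho)$ pairwise disjoint and contained in $\overline{\mD}(0,\rho)$; a square grid works for $N$ large. Then $|r(\lambda)|\le N^{-1/2-1/m}$, and $r(\lambda)\overline{\mD}(0,\rho)+c_j=\overline{\mD}(c_j,|r(\lambda)|\rho)$ whatever the argument of $r(\lambda)$. This gives strong separation for every $\lambda\in\mD$. It also puts all attractors in the fixed disc $\overline{\mD}(0,\rho)$, which is what your disjoint placement of the blocks needs.

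The rest of your argument then goes through. Values $d(\lambda)=2$ are attained only as $\sup_{n,m}1/(h_n(\lambda)+1/m)$, which countable stability handles.
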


In particular, Theorem \ref{theorem:FRY1} and Theorem \ref{theorem:FRY2} together provide a complete answer to Question \ref{question:motions} when $\mathcal{F}$ is the packing dimension. For (upper) Minkowski dimension or Hausdorff dimension the problem remains open, although there is a partial result, also proved in \cite{FRY23}.

\begin{theorem}[Fuhrer--Ransford--Younsi]
\label{theorem:FRY3}
Let $f: \mD \times \hC \to \hC$ be a holomorphic motion and let $E \subset \mC$. Then either $\dim_H(E_\lambda)=0$ for all $\lambda \in \mD$, or $\lambda \mapsto 1/\operatorname{dim}_H(E_\lambda) - 1/2$ is the supremum of a family of inf-harmonic functions on $\mD$.
\end{theorem}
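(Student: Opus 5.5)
The plan follows the strategy of \cite{FRY23} for Hausdorff dimension: reduce to measures and use a harmonic-type distortion estimate for balls. First, by Frostman-type characterizations, $\dim_H(E_\lambda)$ is the supremum over compact $K\subset E$ (one may pass to compact subsets, since Hausdorff dimension of analytic sets is inner regular and holomorphic motions of $\hC$ are homeomorphisms) and over measures $\nu$ on $K_\lambda$ of the largest $s$ with $\nu(B(w,r))\lesssim r^s$. Transport such a measure back to $K$ via $f_\lambda^{-1}$, i.e.\ write $\nu=(f_\lambda)_*\mu$ with $\mu$ a fixed measure on $K$. The goal is to show that for each fixed compact $K$ and fixed probability measure $\mu$ on $K$, the quantity
$$\lambda\mapsto \frac{1}{s_\mu(\lambda)}-\frac12,$$
where $s_\mu(\lambda)$ is a suitable lower local dimension exponent of $(f_\lambda)_*\mu$ (for instance the essential-infimum/Frostman exponent), is inf-harmonic. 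Then $1/\dim_H(E_\lambda)-1/2=\inf_{K,\mu}\bigl(1/s_\mu(\lambda)-1/2\bigr)$ is not quite what we want. So we instead use the dual description via covers, $\dim_H(E_\lambda)=\inf$ over covers, so that $1/\dim_H(E_\lambda)=\sup$ over ``cover data'' of quantities we show to be inf-harmonic after subtracting $1/2$. This supremum-of-inf-harmonic form is exactly what the statement asserts.

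Concretely, for each $\lambda_0$ and each $t$ with $t>\dim_H(E_{\lambda_0})$ I would take a cover of $E_{\lambda_0}$ by discs $B(w_j,r_j)$ with $\sum r_j^{t}$ small. The key distortion input is the holomorphic-motion version of the area and dimension distortion of Astala: for a disc $B$ moved by $f$, the image $f_\lambda(B)$ has diameter controlled by $\operatorname{diam}(B)^{P(\lambda)}$, where $\log$ of the diameter, suitably normalized, is a harmonic function of $\lambda$ up to bounded error (using Harnack and the fact that $\lambda\mapsto \log|f_\lambda(z)-f_\lambda(w)|$ is harmonic). Combined with Astala's area estimate (which is where the $1/2$ enters: the exponent $1/\dim-1/2$ behaves like $\frac{1}{2}\cdot\frac{1-|\lambda|}{1+|\lambda|}$-type Möbius distortion of the disc), one gets a positive harmonic function $h$ on $\mD$ with $h(\lambda_0)\approx 1/t-1/2$ and with $1/\dim_H(E_\lambda)-1/2\ge h(\lambda)$ everywhere. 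Such an $h$ arises as an infimum-compatible family, yielding an inf-harmonic minorant attaining the value at $\lambda_0$ in the limit; taking the supremum over $\lambda_0$ and $t$ gives the required representation.

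The main obstacle is the middle step: producing, from a single cover at $\lambda_0$, a bound valid at all $\lambda$ whose exponent depends harmonically on $\lambda$ with the precise $1/2$ shift. I would try first to prove it for discs of a fixed small scale by applying Harnack's inequality to $\lambda\mapsto \log(\operatorname{diam} f_\lambda(B)/\operatorname{diam}B)/\log \operatorname{diam}B$ and then using Astala's weighted area inequality $\sum \operatorname{diam}(f_\lambda(B_j))^{2}$ sums to control the sums of powers via Hölder, which is where $1/2$ appears as the reciprocal of the area dimension $2$.
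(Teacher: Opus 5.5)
The paper does not prove this statement; it is quoted from \cite{FRY23}. So I have judged your plan on its own terms and against the machinery the paper uses for the quasi-Assouad analogue. There is a genuine gap, exactly at the step you flag as the main obstacle, and in the form you state it that step is false.

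\emph{The harmonic minorant cannot exist.} You want, for each $\lambda_0$ and each $t>\dim_H(E_{\lambda_0})$, a positive \emph{harmonic} function $h$ with $h\le v:=1/\dim_H(E_\cdot)-1/2$ and $h(\lambda_0)\approx 1/t-1/2$. By Harnack such $h$ form a normal family, so letting $t\downarrow\dim_H(E_{\lambda_0})$ you would obtain a harmonic $h\le v$ with $h(\lambda_0)=v(\lambda_0)$. Now apply Theorem~\ref{theorem:FRY2}, extended to $\hC$ by Slodkowski, with $1/d=\min(1+\tfrac12\operatorname{Re}\lambda,\,1-\tfrac12\operatorname{Re}\lambda)$. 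Then $v=\tfrac12(1-|\operatorname{Re}\lambda|)$ is superharmonic and not harmonic. The function $v-h\ge0$ is then superharmonic and vanishes at $\lambda_0=0$, which contradicts the minimum principle. So the minorants must be genuinely inf-harmonic.

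\emph{Per-disc estimates cannot produce the $1/2$.} Your mechanism yields neither the inf-harmonic minorants nor even the $1/2$ in the inequality. Harnack applied disc by disc only gives $a_j(\lambda)\lesssim a_j(\lambda_0)^{1/K}$ for the normalized image diameters, where $K$ is the Harnack constant. These bounds, together with $\sum_j a_j(\lambda)^2\le C$, are all satisfied by $N=Sr^{-t}$ discs of radius $r$ whose images have size $r^{1/K}$ (when $Kt\le2$). For this configuration $\sum_j a_j(\lambda)^s=Sr^{s/K-t}\to\infty$ as $r\to0$ for every $s<Kt$. Hence no H\"older combination of per-disc bounds with the area bound can beat $\min(Kt,2)$, whereas the theorem gives $1/s-1/2=\tfrac1K(1/t-1/2)$. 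Note also that an area bound requires disjoint covering sets, which arbitrary covering discs are not.

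\emph{What is missing.} The area bound has to enter \emph{before} inf-harmonicity or Harnack is used, and it enters through entropy.
\begin{itemize}
\item Cover $E_{\lambda_0}$ by dyadic squares $Q_j$ with disjoint interiors. Set $a_j(\lambda):=\operatorname{diam}(f_\lambda\circ f_{\lambda_0}^{-1}(Q_j))/|h(\lambda)|$, where $h$ is holomorphic and dominates the image of a fixed disc containing all $Q_j$, as in the proof of Lemma~\ref{lemma2}.
\item Your observation that $\log|f_\lambda(z)-f_\lambda(w)|$ is harmonic gives that each $\log(1/a_j)$ is inf-harmonic (Lemma~\ref{lemma1}). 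Disjointness plus quasisymmetry gives $\sum_j a_j(\lambda)^2\le C$ on $\mD(0,p)$.
\item For a probability vector $q$ set $L_q:=\sum_j q_j\log(1/a_j)$ and $H(q):=-\sum_j q_j\log q_j$. Gibbs' inequality gives $2L_q-H(q)+\log C\ge0$. This function is therefore inf-harmonic, as in the proof of Corollary~\ref{corollary:assouadqc1}.
\item With $S:=\sum_j a_j(\lambda_0)^t<1$, it follows that
\[
u:=\inf_q\frac{2L_q-H(q)+\log C}{2\bigl(H(q)-\log S\bigr)}
\]
is inf-harmonic.
\item The variational identity $\log\sum_j a_j^s=\sup_q\bigl(H(q)-sL_q\bigr)$, which also yields Theorem~\ref{C:implicit}, gives two facts. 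First, $u(\lambda_0)\ge 1/t-1/2$. Second, $\sum_j a_j(\lambda)^s\le C^{s/2}S^{s\,u(\lambda)}$ whenever $1/s-1/2\le u(\lambda)$.
\item Take finer covers with $S\to0$, pass to a limit using Proposition~\ref{P:infharmonic}(iii), and exhaust $\mD$. This gives an inf-harmonic $u\le v$ with $u(\lambda_0)\ge 1/t-1/2$. The supremum over $\lambda_0$ and $t$ is the statement.
\end{itemize}
Astala's area theorem plays no role here. The trivial area bound for disjoint images, passed through Gibbs' inequality, is what produces the $1/2$.
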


Inf-harmonic functions enjoy several useful properties. For instance, they are continuous, superharmonic and satisfy Harnack's inequality. It follows from Theorem \ref{theorem:FRY1} and Theorem \ref{theorem:FRY3} that for any holomorphic motion $f:\mD \times \hC \to \hC$ and any set $E \subset \mC$, the function
$$\lambda \mapsto \frac{1}{\dim(E_\lambda)} - \frac{1}{2} \qquad (\lambda \in \mD)$$
satisfies Harnack's inequality, for $\dim$ either of $\dim_H$, $\overline{\dim}_M$ or $\dim_P$ (again, in the case of Minkowski dimension we assume that $E$ is bounded). Combining this with the fact that every quasiconformal mapping can be embedded in a holomorphic motion directly yields the following quasiconformal distortion bounds, as observed in \cite{FRY23}.

\begin{corollary}
\label{corollary:qc}
Let $F:\mC\to\mC$ be a $k$-quasiconformal mapping and let $E$ be a subset of $\mC$ such that $\dim(E)>0$. Then
\[
\frac{1}{K}\Bigl(\frac{1}{\dim E}-\frac{1}{2}\Bigr)
\le \Bigl(\frac{1}{\dim F(E)}-\frac{1}{2}\Bigr)\le
K\Bigl(\frac{1}{\dim E}-\frac{1}{2}\Bigr),
\]
where $K:=(1+k)/(1-k)$.
\end{corollary}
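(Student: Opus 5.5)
The plan is to embed $F$ in a holomorphic motion and then use Harnack's inequality for the function $\lambda\mapsto 1/\dim(E_\lambda)-1/2$ on $\mD$.

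\textbf{Normalization.} First I would reduce to the case where $F$ fixes $0$ and $1$ (it fixes $\infty$ after extending to $\hC$). Post-composing with an affine map $A$ changes none of the dimensions involved. So replacing $F$ by $A\circ F$ is harmless.

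\textbf{Embedding.} As recalled in the introduction, there is a holomorphic motion $f:\mD\times\hC\to\hC$ with $f_k=F$, where we may take $k\in[0,1)$ by rotating the parameter. Concretely, take $\mu_\lambda=\lambda\mu_F/k$ and let $f_\lambda$ be the normalized solution of the Beltrami equation; $f_\lambda$ depends holomorphically on $\lambda$ by the Ahlfors--Bers theorem. For $\overline{\dim}_M$ I would assume $E$ is bounded, as in Theorem \ref{theorem:FRY1}.

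\textbf{Harnack inequality.} Set
\[
u(\lambda):=\frac{1}{\dim E_\lambda}-\frac12 .
\]
Since $\dim E_\lambda\le 2$, we have $u\ge 0$. Because $\dim E>0$ and $E_0=E$, the degenerate alternative in Theorems \ref{theorem:FRY1} and \ref{theorem:FRY3} does not occur.

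For $\dim=\overline{\dim}_M$ or $\dim_P$, Theorem \ref{theorem:FRY1} says that $1/\dim E_\lambda=\inf_{h\in\mathcal H} h$ for a family $\mathcal H$ of positive harmonic functions. Each $h\in\mathcal H$ satisfies $h\ge 1/\dim E_\lambda\ge 1/2$, so $h-1/2$ is a positive harmonic function. Hence $u=\inf_h(h-\tfrac12)$ is itself inf-harmonic.

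For $\dim_H$, Theorem \ref{theorem:FRY3} says that $u=\sup_\alpha v_\alpha$, where each $v_\alpha$ is inf-harmonic. Each $v_\alpha$ is an infimum of positive harmonic functions, so it inherits the Harnack inequality
\[
\frac{1-|\lambda|}{1+|\lambda|}\,v(0)\le v(\lambda)\le \frac{1+|\lambda|}{1-|\lambda|}\,v(0).
\]
Taking the supremum over $\alpha$ preserves both sides. In every case $u$ therefore obeys this two-sided Harnack bound.

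\textbf{Conclusion.} Evaluate the Harnack bound at $\lambda=k$. Since $u(0)=1/\dim E-1/2$, $u(k)=1/\dim F(E)-1/2$, and $(1+k)/(1-k)=K$, we get exactly the claimed inequalities.

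\textbf{Main obstacle.} The main subtlety is the step where Harnack passes to $u$. It requires shifting the infimum by $1/2$ while keeping each function positive, and checking that suprema and infima preserve both Harnack bounds. Both reduce to the pointwise inequalities above, applied with the common constants.
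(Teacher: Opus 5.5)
Your proposal is correct and follows essentially the same route as the paper: embed $F$ in a holomorphic motion with $f_k=F$, get Harnack's inequality for $\lambda\mapsto 1/\dim(E_\lambda)-1/2$ from Theorems \ref{theorem:FRY1} and \ref{theorem:FRY3}, and evaluate at $\lambda=k$. You also supply two details the paper leaves implicit: each harmonic function can be shifted by $1/2$, and infima and suprema preserve the Harnack bounds. The only loose end is $k=0$, where $\lambda\mu_F/k$ is undefined, but there $F$ is affine and the inequality holds trivially.
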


For the Hausdorff dimension, the above estimate was proved by Astala \cite{As94}. For packing dimension it is a special case of a result of Kaufmann \cite[Theorem 4]{ka00}.

We also mention that the proof of Theorem \ref{theorem:FRY3} can be adapted to obtain the following bound for the Hausdorff dimension of quasicircles due to Smirnov \cite{Sm10}, see \cite[\S10]{FRY23}.

\begin{corollary}[Smirnov]
\label{corollary:smirnov}
If $\Gamma$ is a $k$-quasicircle, i.e. the image of the unit circle under a $k$-quasiconformal mapping of the sphere, then $\dim_H(\Gamma) \leq 1+k^2$.
\end{corollary}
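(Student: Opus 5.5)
The plan is to combine Smirnov's symmetrization trick with a refinement of the harmonic-function argument behind Theorem \ref{theorem:FRY3}. The refinement has to exploit that the dimension is known exactly along an entire diameter of $\mD$, not only at $\lambda=0$.

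\emph{Step 1: an antisymmetric Beltrami coefficient.} Let $\Gamma=F(\partial\mD)$ with $F$ $k$-quasiconformal. Following Smirnov, replace $F$ by a $k$-quasiconformal map $G$ with $G(\partial\mD)=\Gamma$ whose Beltrami coefficient $\mu$ is \emph{antisymmetric} with respect to reflection $z^*=1/\bar z$ in the unit circle. Concretely, one conformally welds the two complementary domains of $\Gamma$ using a reflection-compatible parametrisation. Now embed $G$ in the holomorphic motion $f_\lambda$ with Beltrami coefficient $\mu_\lambda=(\lambda/k)\mu$, normalised to fix $0,1,\infty$, so that $f_k=G$ up to a M\"obius map. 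Multiplying an antisymmetric coefficient by $i$ makes it symmetric. Hence for $\lambda\in i\mathbb R\cap\mD$ the map $f_\lambda$ commutes with reflection in the circle, so $E_\lambda=f_\lambda(\partial\mD)$ is a circle. Therefore $\dim_H(E_\lambda)=1$, that is,
\[
u(\lambda):=\frac{1}{\dim_H(E_\lambda)}-\frac12=\frac12\qquad\text{on the segment } I:=(-i,i).
\]

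\emph{Step 2: a harmonic lower bound.} I would reopen the proof of Theorem \ref{theorem:FRY3}. There, a lower bound $u(\lambda_0)\ge c$ comes from positive harmonic functions $h$ on $\mD$ that dominate the relevant covering/energy quantities. These are built by taking logarithms of moduli of holomorphic functions of $\lambda$ (distances $f_\lambda(z)-f_\lambda(w)$, derivatives) and normalising at a base point. The adaptation is to normalise not at the single point $0$ but along the whole segment $I$, where the covering data is that of a circle. The target is positive harmonic functions $h$ on $\mD$ with $h\ge\frac12$ on $I$ (up to $\varepsilon$), such that $u\ge\inf h$. If $h$ is positive on $\mD$ and $h\ge\frac12$ on $I$, the minimum principle on $\mD\setminus I$ gives $h\ge\frac12\,\omega$, where $\omega$ is the harmonic measure of $I$ in $\mD\setminus I$.

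\emph{Step 3: compute $\omega$.} The map $\lambda\mapsto\lambda^2$ sends the right half-disk conformally onto $\mD$ minus the slit $(-1,0]$. Further conformal maps reduce $\omega(k)$ to an explicit harmonic measure, which I expect to equal $(1-k^2)/(1+k^2)$. This gives
\[
\frac1{\dim_H\Gamma}-\frac12\ \ge\ \frac{1-k^2}{2(1+k^2)},\qquad\text{i.e.}\qquad \dim_H\Gamma\le 1+k^2 .
\]

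\emph{Main obstacle.} The hard step is Step 2. Theorem \ref{theorem:FRY3} only states that $u$ is a supremum of inf-harmonic functions. That form gives upper information on each piece, not the lower bound we need along $I$. So one must go inside the proof and check that the Hausdorff-content estimates at $\lambda\in I$ (for a circle, with controlled constants uniformly along $I$) transfer to the harmonic majorants. I would first try to cover $E_\lambda$ by images of a fixed family of dyadic arcs and apply Harnack-type control of $\log|f_\lambda(z)-f_\lambda(w)|$ simultaneously at all points of $I$.
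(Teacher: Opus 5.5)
\textbf{There is a genuine gap, and it sits in Step 3, not only in Step 2.} The harmonic measure you need is explicit. Under $z=i\lambda$ the right half-disk becomes the upper half-disk, and $w=(1+z)/(1-z)$ sends that onto the first quadrant, with $I$ going to the positive real axis. Hence
\[
\omega(k)=1-\frac{2}{\pi}\arg\frac{1+ik}{1-ik}=1-\frac{4}{\pi}\arctan k ,
\]
and not $(1-k^2)/(1+k^2)$. Your Steps 2--3 would therefore give only
\[
\frac{1}{\dim_H\Gamma}\ge 1-\frac{2}{\pi}\arctan k,
\qquad\text{i.e.}\qquad
\dim_H\Gamma\le \Bigl(1-\frac{2}{\pi}\arctan k\Bigr)^{-1}=1+\frac{2}{\pi}k+O(k^2).
\]
This bound is linear in $k$ and strictly weaker than $1+k^2$ for every $k\in(0,1)$.

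This cannot be repaired by a sharper computation. The function $h(\lambda)=\frac12(1-\operatorname{Re}\lambda)$ is positive harmonic on $\mD$ and equals $\frac12$ on $I$, yet $h(k)=\frac12(1-k)$. So no argument that uses only ``positive harmonic on $\mD$ and $\ge \frac12$ on $I$'' can reach order $k^2$. Indeed, for $\dim_{qA}$ your Step 2 comes for free: by Theorem~\ref{theorem:main1}, $1/\dim_{qA}(E_\lambda)-\frac12$ is inf-harmonic, so every majorant in the family is $\ge\frac12$ on $I$. Even so, one still only gets the linear bound.

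\textbf{The missing idea is symmetry of the harmonic majorants themselves,} not just knowledge of the values of $u$ along a diameter. The paper gets Smirnov's bound through Corollary~\ref{corollary:assouadqc2}, together with $\dim_H\le\dim_{qA}$.
\begin{enumerate}[(i)]
\item By finite stability it reduces to $\Gamma=f_{ik}([a,b])$, where $f$ satisfies $f(\lambda,z)=\overline{f(\overline{\lambda},\overline{z})}$. This is your normalization rotated by $i$.
\item For $z,w\in\mR$, the functions $\log|h(\lambda)/(f_\lambda(z)-f_\lambda(w))|$ satisfy $H(\lambda)=H(\overline{\lambda})$. Hence $1/\dim_{qA}(E_\lambda)-\frac12$ is inf-sym-harmonic (Theorem~\ref{theorem:main2}).
\item Lemma~\ref{L:HarnackSym} then gives $v(ik)\ge\frac{1-k^2}{1+k^2}\,v(0)$, using only the single value $v(0)=\frac12$. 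This inequality follows from averaging the Poisson kernel at $iy$ and $-iy$: $\frac12\bigl(P(iy,\zeta)+P(-iy,\zeta)\bigr)\ge\frac{1-y^2}{1+y^2}$.
\end{enumerate}

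This also dissolves your ``main obstacle''. A one-point lower Harnack bound passes through suprema. So even the sup-of-inf-sym-harmonic form of the Hausdorff result in \cite[\S10]{FRY23} suffices, and no uniform control along $I$ is needed.

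A minor further point: with your circle reflection $z\mapsto 1/\overline{z}$, distances are not preserved. One has $|f_{-\overline{\lambda}}(z)-f_{-\overline{\lambda}}(w)|=|f_\lambda(z)-f_\lambda(w)|/|f_\lambda(z)f_\lambda(w)|$ for $z,w\in\partial\mD$, so these logarithms are not symmetric in $\lambda$. This is why one works with arcs sent to intervals of $\mR$.
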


This article is devoted to the study of the variation of Assouad dimension $\dim_A$ and quasi-Assouad dimension $\dim_{qA}$ under holomorphic motions, i.e., Question \ref{question:motions} in the cases where $\mathcal{F}=\dim_A$ and $\mathcal{F}=\dim_{qA}$. The notion of Assouad dimension was formally introduced by Patrice Assouad in the 1970's for the study of the existence (and non-existence) of bi-Lipschitz embeddings into Euclidean spaces, although it had been considered before by Bouligand in the 1920's. Assouad dimension has proven over the years to be a fundamental tool in quasiconformal geometry, notably in the study of questions related to conformal dimension and quasisymmetric uniformization. The notion of quasi-Assouad dimension, on the other hand, was introduced much more recently, in 2016, by L\"{u} and Xi \cite{LL16}, motivated by the search for a variant of Assouad dimension invariant under quasi-Lipschitz maps, a natural property also shared by the Hausdorff, packing and upper Minkowski dimensions. We refer the reader to \cite{Fr21} for a comprehensive treatment of the Assouad and quasi-Assouad dimensions.

Our first main result is a version of Theorem \ref{theorem:FRY1} for quasi-Assouad dimension.

\begin{theorem}
\label{theorem:main1}
Let $f: \mD \times \hC \to \hC$ be a holomorphic motion and let $E \subset \mC$ be bounded. Then either $\operatorname{dim}_{qA}(E_\lambda)=0$ for all $\lambda \in \mD$, or $\lambda \mapsto 1/\operatorname{dim}_{qA}(E_\lambda)$ is an inf-harmonic function on $\mD$.
\end{theorem}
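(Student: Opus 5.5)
Since the paper stresses an elementary approach, I would follow the strategy of \cite{FRY23} for upper Minkowski dimension, but localized to the scales that define quasi-Assouad dimension. Recall that $\dim_{qA}(E)=\lim_{\delta\to0}\overline{\dim}_A^{\theta}$-type quantities. Concretely, $\dim_{qA}(E)$ is the infimum of those $\alpha$ for which, for every $\delta>0$, there is a constant $C$ such that
\[
N\bigl(B(z,R)\cap E,r\bigr)\le C\,(R/r)^{\alpha}
\quad\text{for all } z\in E \text{ and } 0<r\le R^{1+\delta}<R\le 1 .
\]
Here $N(\cdot,r)$ denotes the least number of $r$-balls needed to cover a set.

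\textbf{Reduction to harmonic majorants.} The key step is to show the following. Suppose $\dim_{qA}(E_{\lambda_0})<\alpha$ at some $\lambda_0$. Then, for any $\lambda_1$ and any $\alpha'$ with
\[
1/\alpha' < h_{\lambda_0}(\lambda_1)/\alpha
\]
for a suitable positive harmonic function $h_{\lambda_0}$ with $h_{\lambda_0}(\lambda_0)=1$, we get $\dim_{qA}(E_{\lambda_1})\le \alpha'$ (appropriately formulated). Given this, the conclusion is formal, exactly as in \cite{FRY23}. If $\dim_{qA}(E_{\lambda})>0$ somewhere, we set
\[
u(\lambda):=\inf_{\lambda_0,\,\alpha}\frac{P(\lambda_0,\lambda)}{\alpha},
\]
with $P(\lambda_0,\cdot)$ the Poisson/Harnack-type kernel. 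Then $u=1/\dim_{qA}(E_\lambda)$: the inequality $u\ge 1/\dim_{qA}$ comes from the comparison step, and $u\le$ by taking $\lambda_0=\lambda$. Rather than general harmonic functions, it is more convenient to work directly with the family $\mathcal H$ of positive harmonic functions $h$ for which $1/h(\lambda)\ge \dim_{qA}(E_\lambda)$ everywhere. The dichotomy (all zero, or all positive) follows from Harnack's inequality.

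\textbf{The comparison step.} Fix $z\in E$ and scales $r<R$. Consider the finitely many points of a maximal $r$-separated set in $B(z,R)\cap E$ at time $\lambda_0$, and the holomorphic functions $\lambda\mapsto f_\lambda(w)-f_\lambda(w')$ and $\lambda\mapsto f_\lambda(w)-f_\lambda(z)$. These never vanish, so their logarithms of moduli are harmonic. After normalizing, the function
\[
\lambda\mapsto \frac{\log|f_\lambda(w)-f_\lambda(w')|}{\log r}
\]
can be controlled from above and below by positive harmonic functions. One uses Harnack together with the uniform Hölder continuity of $f_\lambda$ given by Mori's theorem (exponents $(1-|\lambda|)/(1+|\lambda|)$ relative to $\lambda_0$ after a Möbius change of variables). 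I would follow the device of \cite{FRY23}: encode covering counts via a holomorphic family of finitely many points and apply the lemma that if $\log|g_j(\lambda)|/\log r_j\to$ limits, the limits form an inf-harmonic object (Harnack applied to the harmonic functions $\log|g_j|/\log r_j$, which are positive on compact subdisks for small $r$).

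\textbf{Main obstacle.} The main obstacle is that quasi-Assouad dimension requires uniformity over all locations and all pairs of scales $R^{1+\delta}\ge r$. By contrast, the Minkowski argument only needs a single scale going to zero. Moreover, the distorted scales $R_\lambda,r_\lambda$ are not comparable to powers of $R,r$ uniformly, but only with exponents varying harmonically. I would handle this with two ingredients. First, Mori distortion shows that the image of $B(z,R)$ is sandwiched between balls of radii $R^{\beta_\pm(\lambda)}$ up to constants, with $\beta_\pm$ harmonic in the exponent. Second, the condition $r\le R^{1+\delta}$ is transported to $r_\lambda\le R_\lambda^{1+\delta'}$ with $\delta'\to0$ as $\delta\to0$. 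This is precisely why the quasi-Assouad dimension, rather than the Assouad dimension, behaves well: the $\delta$-slack absorbs the multiplicative distortion of exponents. Taking $\delta\to0$ last then yields the harmonic inequality.
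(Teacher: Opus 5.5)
You have the right intuition for why quasi-Assouad dimension, rather than Assouad dimension, is tractable: the $\theta$-slack absorbs the distortion of exponents. The paper indeed transports $r\le R^{1+\theta}$ to $r'\le (R')^{1+\theta'}$ with $\theta'=\theta/K^4$ via quasisymmetry and H\"older continuity.

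But the logical skeleton points the wrong way. To prove that $u(\lambda)=1/\dim_{qA}(E_\lambda)$ is inf-harmonic, you need, for each $\lambda_0$, an (inf-)harmonic \emph{majorant} of $u$ equal to $u(\lambda_0)$ at $\lambda_0$. So you must transfer \emph{lower} bounds on dimension from $\lambda_0$ to $\lambda$, which is why the paper works with packings (witnesses of large dimension) rather than coverings. Your comparison step transfers upper bounds. It yields $u(\lambda_1)\ge h_{\lambda_0}(\lambda_1)\,u(\lambda_0)$, a harmonic \emph{minorant}. Your family $\mathcal H$ (defined by $1/h\ge\dim_{qA}(E_\lambda)$, i.e.\ $h\le u$) also consists of minorants. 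The inequality $u\ge 1/\dim_{qA}$ in your formula $u=\inf_{\lambda_0,\alpha}P(\lambda_0,\cdot)/\alpha$, however, needs majorants. As stated, the comparison step is in fact false. Take a motion carrying the unit circle at $\lambda=0$ to a von Koch snowflake at some $\lambda=k$ (Theorem~\ref{T:embedding}). Then $u\le 1=u(0)$, so a positive harmonic $h_0\le u$ with $h_0(0)=1$ must be $\equiv 1$ by the maximum principle. This forces $u\equiv 1$, which fails at $\lambda=k$.

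Even with the direction corrected, your outline has no mechanism that produces such a majorant. Controlling each $\log|f_\lambda(w)-f_\lambda(w')|/\log r$ separately by Harnack and Mori only gives distortion inequalities with non-sharp exponents. It offers no way to assemble the many pairwise harmonic functions into one inf-harmonic function. Nor can a kernel fixed in advance play the role of $P(\lambda_0,\cdot)$: the majorant has to be built from $E$ and $f$. The paper uses two ingredients that you are missing.

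\emph{First}, $f_\lambda\circ f_{\lambda_0}^{-1}$ turns an equal-radius configuration (such as your maximal $r$-separated set) into sets of very different sizes. One therefore needs the characterization of $h_F(\theta)$ by packings with \emph{variable} radii at most $R^{1+\theta}$ and $\sum_B(\operatorname{diam}B/2R)^t\to\infty$. This is Theorem~\ref{theorem:assouad}, proved by a dyadic grouping of radii and the choice $\beta-(t-\beta)/\theta>t'$ to handle $R_n\to0$.

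\emph{Second}, one must merge one harmonic function per ball into a single exponent. Set $a_B(\lambda)=\operatorname{diam}(f_\lambda\circ f_{\lambda_0}^{-1}(B))/(2c|f_\lambda\circ f_{\lambda_0}^{-1}(y_n)-f_\lambda\circ f_{\lambda_0}^{-1}(x_n)|)$. Its denominator, the modulus of a non-vanishing holomorphic function, replaces your Mori sandwich, and makes each $\log(1/a_B)$ inf-harmonic by Lemma~\ref{lemma1}. The implicit function theorem (Theorem~\ref{C:implicit}) then gives that $1/s_n$ is inf-harmonic, where $s_n$ is defined by holding $\sum_B a_B(\lambda)^{s_n(\lambda)}$ constant, with $s_n(\lambda_0)=d_n$. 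Compactness (Proposition~\ref{P:infharmonic}(iii)) yields an inf-harmonic limit $u$ with $u(\lambda_0)=1/h_{E_{\lambda_0}}(\theta)$.

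Only at this stage does your scale-transport idea enter. Together with Corollary~\ref{C:quasisym}, it pushes the packings forward and proves $u(\lambda)\ge 1/h_{E_\lambda}(\theta')$. One then lets $\theta\to0$, lets $p\to1$, and takes the infimum over $\lambda_0$.
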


An application of Harnack's inequality immediately gives a version of Corollary \ref{corollary:qc} for quasi-Assouad dimension.

\begin{corollary}
\label{corollary:assouadqc1}
Let $F:\mC\to\mC$ be a $k$-quasiconformal mapping and let $E$ be a bounded subset of $\mC$ such that $\dim_{qA}(E)>0$. Then
\[
\frac{1}{K}\Bigl(\frac{1}{\dim_{qA} E}-\frac{1}{2}\Bigr)
\le \Bigl(\frac{1}{\dim_{qA} F(E)}-\frac{1}{2}\Bigr)\le
K\Bigl(\frac{1}{\dim_{qA} E}-\frac{1}{2}\Bigr),
\]
where $K:=(1+k)/(1-k)$.
\end{corollary}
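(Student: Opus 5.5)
The plan is to deduce the corollary from Theorem~\ref{theorem:main1} by embedding $F$ in a holomorphic motion and then applying Harnack's inequality to each harmonic function in the inf-harmonic representation, after subtracting $1/2$.

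\textbf{Step 1: normalization.} Compose $F$ with a complex affine map $A(z)=az+b$ so that $G:=A\circ F$ fixes $0$ and $1$. Extend $G$ to $\hC$ by $G(\infty)=\infty$. Then $G$ is still $k$-quasiconformal. Since $A$ is bi-Lipschitz, and $\dim_{qA}$ is invariant under bi-Lipschitz maps (indeed under similarities, directly from the definition), we have $\dim_{qA}G(E)=\dim_{qA}F(E)$. So we may assume $F$ fixes $0,1,\infty$.

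\textbf{Step 2: embedding in a motion.} If $k=0$ there is nothing to prove, so assume $0<k<1$. Let $\mu$ be the Beltrami coefficient of $F$, so $\|\mu\|_\infty\le k$. For $\lambda\in\mD$, let $f_\lambda$ be the normalized solution (fixing $0,1,\infty$) of the Beltrami equation with coefficient $(\lambda/k)\mu$. By the measurable Riemann mapping theorem with analytic dependence on parameters, $f$ is a holomorphic motion of $\hC$, and $f_k=F$. Moreover each $E_\lambda=f_\lambda(E)$ is bounded, because $f_\lambda$ is a homeomorphism of $\hC$ fixing $\infty$ and $E$ is bounded. Since $\dim_{qA}(E_0)=\dim_{qA}(E)>0$, Theorem~\ref{theorem:main1} shows that
\[
u(\lambda):=1/\dim_{qA}(E_\lambda)
\]
is inf-harmonic. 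Write $u=\inf_{h\in\mathcal H}h$ for a family $\mathcal H$ of positive harmonic functions on $\mD$. In particular $u$ is finite everywhere.

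\textbf{Step 3: Harnack with the shift by $1/2$.} This is the only step needing care, since $u-1/2$ is not a priori inf-harmonic. Because $E_\lambda\subset\mC$, we have $\dim_{qA}(E_\lambda)\le 2$, hence $u\ge 1/2$. So every $h\in\mathcal H$ satisfies $h\ge u\ge 1/2$, and $h-1/2$ is a nonnegative harmonic function on $\mD$. Harnack's inequality on $\mD$ (also valid for nonnegative harmonic functions) then gives, with $K=(1+|\lambda|)/(1-|\lambda|)$,
\[
\tfrac1K\bigl(h(0)-\tfrac12\bigr)\le h(\lambda)-\tfrac12\le K\bigl(h(0)-\tfrac12\bigr).
\]
For the upper bound, use $u(\lambda)-\frac12\le h(\lambda)-\frac12\le K(h(0)-\frac12)$ and take the infimum over $h$ on the right, giving $u(\lambda)-\frac12\le K(u(0)-\frac12)$. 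For the lower bound, use $h(\lambda)-\frac12\ge \frac1K(h(0)-\frac12)\ge\frac1K(u(0)-\frac12)$ and take the infimum over $h$ on the left, giving $u(\lambda)-\frac12\ge\frac1K(u(0)-\frac12)$.

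\textbf{Step 4: conclusion.} Evaluate at $\lambda=k$, where $u(k)=1/\dim_{qA}F(E)$ and $u(0)=1/\dim_{qA}E$. The two inequalities from Step 3 are exactly the claimed bounds with $K=(1+k)/(1-k)$.
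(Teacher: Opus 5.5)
Your proof is correct and follows the paper's route: you embed $F$ in a holomorphic motion via the Beltrami coefficient $\lambda\mu_F/k$, apply Theorem~\ref{theorem:main1}, use $\dim_{qA}(E_\lambda)\le 2$ to shift by $1/2$, and apply Harnack's inequality at $\lambda=k$. The differences are cosmetic. The paper says $u-1/2$ is inf-harmonic and cites Proposition~\ref{P:infharmonic}(i), whereas you apply Harnack to each $h-1/2$ and take infima, and you also make the normalization and the case $k=0$ explicit.
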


Unfortunately we were unable to prove a version of Theorem \ref{theorem:main1} for Assouad dimension.

\begin{question}
Does Theorem \ref{theorem:main1} remain true if quasi-Assouad dimension is replaced by Assouad dimension?
\end{question}

We mention that Corollary \ref{corollary:assouadqc1} is true if quasi-Assouad dimension is replaced by Assouad dimension, see the work of Chrontsios Garitsis and Tyson \cite[Theorem 1.2]{CGT23}.

In order to state our second main result, we need to introduce a class of inf-harmonic functions that are symmetric with respect to the real line.

\begin{definition}
A harmonic function $h:\mD \to \mR$ is called \textit{symmetric} if $h(\lambda) = h(\overline{\lambda})$ for all $\lambda \in \mD$. A positive function $u:\mD \to [0,\infty)$ is called \textit{inf-sym-harmonic} if there exists a family $\mathcal{H}$ of positive and symmetric harmonic functions on $\mD$ such that $u(\lambda) = \inf_{h \in \mathcal{H}} h(\lambda)$ for all $\lambda \in \mD$.
\end{definition}

Our second main result says that the reciprocal of the variation of the quasi-Assouad dimension of a linear set under a symmetric holomorphic motion is inf-sym-harmonic.

\begin{theorem}
\label{theorem:main2}
Let $E \subset \mR$ be bounded and let $f: \mD \times \hC \to \hC$ be a holomorphic motion symmetric with respect to the real line, i.e.
$$f(\lambda,z)=\overline{f(\overline{\lambda},\overline{z})} \qquad (\lambda \in \mD, z \in \mC).$$
Then either $\operatorname{dim}_{qA}(E_\lambda)=0$ for all $\lambda \in \mD$, or $\lambda \mapsto 1/\operatorname{dim}_{qA}(E_\lambda)$ is an inf-sym-harmonic function on $\mD$.
\end{theorem}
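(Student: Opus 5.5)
We will follow the strategy of Theorem \ref{theorem:main1} and keep track of the extra symmetry throughout. The first step is to describe $1/\dim_{qA}(E_\lambda)$ through covering numbers. Fix $\delta>0$, $R>0$ and $x\in E$, and write $N_\lambda(x,R,r)$ for the minimal number of $r$-balls needed to cover $E_\lambda\cap B(f_\lambda(x),R)$ with $r\le R^{1+\delta}$. We will show that $1/\dim_{qA}(E_\lambda)$ is an infimum over all admissible scale data of quantities of the form
$$\liminf \frac{\log(R/r)}{\log N_\lambda(x,R,r)}.$$
Each such quantity will then be shown to be a limit of infima of positive harmonic functions of $\lambda$, as in the proof of Theorem \ref{theorem:main1}. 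The harmonic functions come from choosing, for each configuration of points $z_1,\dots,z_N$ of $E$ and centre $x$, the functions
$$\lambda\mapsto \frac{\log\bigl(\text{scale ratio at }\lambda\bigr)}{\log N},$$
where the scale ratio is computed from the distances $|f_\lambda(z_i)-f_\lambda(z_j)|$. These distances have the form $\log|g(\lambda)|$ with $g(\lambda)=f_\lambda(z_i)-f_\lambda(z_j)$ holomorphic and nonvanishing, by injectivity, so $\log|g|$ is harmonic. Positivity after normalization comes from the standard distortion estimates for holomorphic motions (Harnack / the $\lambda$-lemma Hölder bounds), exactly as in the earlier proof.

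The new input is symmetry. Since $E\subset\mR$ and $f(\lambda,z)=\overline{f(\overline\lambda,\overline z)}$, for real $z,w$ we get
$$f_{\overline\lambda}(z)-f_{\overline\lambda}(w)=\overline{f_\lambda(z)-f_\lambda(w)}.$$
Hence $|g(\overline\lambda)|=|g(\lambda)|$, and every function $\log|f_\lambda(z)-f_\lambda(w)|$ with $z,w\in E$ is a symmetric harmonic function. Consequently $E_{\overline\lambda}=\overline{E_\lambda}$ is isometric to $E_\lambda$, so $\lambda\mapsto 1/\dim_{qA}(E_\lambda)$ is itself symmetric.

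There are two ways to get a symmetric family. The first is to check that every harmonic function in the family produced by the proof of Theorem \ref{theorem:main1} is built only from the quantities $\log|f_\lambda(z)-f_\lambda(w)|$ with $z,w\in E$, plus constants, and from sums and positive multiples of these. Then every function in the family is automatically symmetric. The alternative is a soft fallback: if $h\ge u$ is positive harmonic with $u(\overline\lambda)=u(\lambda)$, then
$$\tfrac12\bigl(h(\lambda)+h(\overline\lambda)\bigr)$$
is positive, symmetric and harmonic, and still lies above $u$. This averaging only works, however, if the infimum is attained pointwise in the limit. Taking $u=\inf_{h\in\mathcal H}h$, the averaged functions give
$$\inf\nolimits_h \tfrac12\bigl(h(\lambda)+h(\overline\lambda)\bigr)\ge u(\lambda),$$
but equality is not automatic.

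The main obstacle is this last point. An infimum of averages can be strictly larger than the infimum itself. To get around it, I would first try to use the first route: show that the family is closed under the reflection $h\mapsto h(\overline{\,\cdot\,})$ and is built from symmetric pieces. If that fails, I would show that for each $\lambda_0$ there is an $h$ in the family that is nearly optimal at both $\lambda_0$ and $\overline{\lambda_0}$. For this I would use that each $h$ is itself symmetric when constructed from pairs of real points, which is the first route again. So the key verification is that in the proof of Theorem \ref{theorem:main1} every harmonic minorant-majorant used is of the form
$$\lambda\mapsto \frac{a+\sum_j c_j\log|f_\lambda(z_j)-f_\lambda(w_j)|}{b}$$
with $z_j,w_j\in E$, possibly after passing to limits, since limits of symmetric harmonic functions stay symmetric. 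Checking that the normalization used to make these functions positive does not break the symmetry is where the care will go.
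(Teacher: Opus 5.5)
There is a genuine gap, and it sits exactly at the step you call the key verification: for the construction actually used in the proof of Theorem \ref{theorem:main1}, it is false. (Your sketch of that proof via fixed-radius covering numbers and configurations of points of $E$ is also not what it does. Images of equal-radius disks under $f_\lambda\circ f_{\lambda_0}^{-1}$ have unequal sizes, which is why Theorem \ref{theorem:assouad} and the implicit equation $\sum_j a_j^{s}=c$ are needed.) In Lemma \ref{lemma2} the quantity entering the implicit equation is $\operatorname{diam}(f_\lambda\circ f_{\lambda_0}^{-1}(B))$ for a whole disk $B\in\mathcal{B}_n$. By Lemma \ref{lemma1}, the harmonic building blocks are therefore
\[
\lambda\mapsto\log\left|\frac{2c\,\bigl(f_\lambda\circ f_{\lambda_0}^{-1}(y_n)-f_\lambda\circ f_{\lambda_0}^{-1}(x_n)\bigr)}{f_\lambda(z)-f_\lambda(w)}\right|,\qquad z,w\in f_{\lambda_0}^{-1}(B),
\]
where $z,w$ range over the non-real topological disk $f_{\lambda_0}^{-1}(B)$ and $y_n$ is an arbitrary point of $\partial\mathbb{D}(x_n,2R_n)$. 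These functions need not be symmetric, and passing to limits does not help.

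Restricting to pairs in $E$ fails as well. $B\cap E_{\lambda_0}$ may consist only of the centre of $B$, so $\operatorname{diam}(f_\lambda(f_{\lambda_0}^{-1}(B)\cap E))$ can vanish and the mass $\sum_B(\operatorname{diam}B/2R_n)^{d_n}$ at $\lambda_0$ is lost. Your averaging fallback genuinely fails too: $1-|\operatorname{Im}\lambda|=\min(1+\operatorname{Im}\lambda,\,1-\operatorname{Im}\lambda)$ is symmetric and inf-harmonic but violates Lemma \ref{L:HarnackSym}. So the construction itself has to be changed, and your proposal does not say how.

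The missing idea is to replace $E$ by the whole real line. Its image $f_{\lambda_0}(\mathbb{R})\supset E_{\lambda_0}$ is an unbounded connected curve, which allows three modifications.
\begin{enumerate}[(i)]
\item Choose $y_n\in\partial\mathbb{D}(x_n,2R_n)\cap f_{\lambda_0}(\mathbb{R})$. Then $f_{\lambda_0}^{-1}(x_n),f_{\lambda_0}^{-1}(y_n)\in\mathbb{R}$, so the normalising modulus $|f_\lambda\circ f_{\lambda_0}^{-1}(y_n)-f_\lambda\circ f_{\lambda_0}^{-1}(x_n)|$ is symmetric in $\lambda$.
\item Use the real sets $f_{\lambda_0}^{-1}(B)\cap\mathbb{R}$ in place of $f_{\lambda_0}^{-1}(B)$. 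Lemma \ref{lemma1sym} and the symmetric implicit function theorem (Theorem \ref{C:implicitsym}) then make $1/s_n$ inf-sym-harmonic.
\item Control the loss at $\lambda_0$ by $\operatorname{diam}(B\cap f_{\lambda_0}(\mathbb{R}))\ge\frac12\operatorname{diam}(B)$. This holds because $B$ is centred on an unbounded connected curve, and it costs only a factor $2^{-d_n}$. At $\lambda$, the inequality $\operatorname{diam}(f_\lambda(f_{\lambda_0}^{-1}(B)\cap\mathbb{R}))\le\operatorname{diam}(f_\lambda\circ f_{\lambda_0}^{-1}(B))$ lets the H\"older/quasisymmetry estimate and the construction of $\mathcal{B}_n'$ via Corollary \ref{C:quasisym} go through as in Lemma \ref{lemma2}.
\end{enumerate}
Your general principle is correct: real endpoints give symmetric $\log|f_\lambda(z)-f_\lambda(w)|$, and positive combinations, infima and limits preserve symmetry. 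It is exactly what makes the argument work, but it only becomes applicable after this modification.
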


As a consequence of Theorem \ref{theorem:main2}, we obtain an improved version of Smirnov's result (Corollary \ref{corollary:smirnov}), where Hausdorff dimension is replaced by the larger quasi-Assouad dimension.

\begin{corollary}
\label{corollary:assouadqc2}
If $\Gamma$ is a $k$-quasicircle, then $\dim_{qA}(\Gamma) \leq 1+k^2$.
\end{corollary}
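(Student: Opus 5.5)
We derive Corollary \ref{corollary:assouadqc2} from Theorem \ref{theorem:main2}, following Smirnov's symmetrization idea as in \cite[\S10]{FRY23}.

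\textbf{Step 1: reduction to a symmetric motion of the real line.} Let $\Gamma=F(\partial\mD)$ with $F$ a $k$-quasiconformal map of $\hC$. Composing with a M\"obius map that sends $\partial\mD$ to $\hat{\mR}$, we may regard $\Gamma$ as the image of $\hat{\mR}$. Möbius maps change neither the quasiconformality constant nor the quasi-Assouad dimension of bounded pieces, since they are bi-Lipschitz on compact sets away from poles. Smirnov's key observation is then used: if $F$ has Beltrami coefficient $\mu$, replace it by the coefficient
\[
\nu(z)=\mu(z) \text{ for } \operatorname{Im} z>0,\qquad \nu(z)=\overline{\mu(\overline{z})}\cdot(\text{antisymmetrized version}) \text{ for } \operatorname{Im} z<0 .
\]
Equivalently, decompose $F=G\circ H$, where $H$ is conformal on one side of $\hat{\mR}$ and $G$ has a Beltrami coefficient with the antisymmetry $\mu(\bar z)=\overline{\mu(z)}$. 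Then $G(\hat{\mR})$ is a M\"obius image of $\Gamma$, still $k$-quasiconformal. Embed $\nu$ in the motion $\mu_\lambda=\lambda\nu/k$ and normalize at $0,1,\infty$. The resulting holomorphic motion $f$ satisfies $f(\lambda,z)=\overline{f(\overline{\lambda},\overline{z})}$ by uniqueness in the measurable Riemann mapping theorem, and $f_k(\hat{\mR})$ is M\"obius equivalent to $\Gamma$.

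\textbf{Step 2: the extra symmetry, $\lambda\mapsto-\lambda$.} With the antisymmetric choice, $f_{\lambda}(\mR)$ and $f_{-\lambda}(\mR)$ are related. Conjugating by $z\mapsto\bar z$ sends $\mu_\lambda$ to $\mu_{-\bar\lambda}$ on the relevant half-plane, so that $f_{\lambda}(\mR)$ and $f_{-\bar\lambda}(\mR)$ coincide up to reflection; for purely imaginary $\lambda$ the curve stays $\mR$. Combined with $\lambda\mapsto\bar\lambda$ symmetry, $d(\lambda):=\dim_{qA}(E_\lambda)$ with $E=[-1,1]$ (bounded, using countable stability via finitely many pieces) satisfies $d(\lambda)=1$ on the imaginary axis.

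\textbf{Step 3: the harmonic estimate.} By Theorem \ref{theorem:main2}, $1/d=\inf h$ over positive symmetric harmonic $h$ with $h\ge 1/d$. Each such $h$ satisfies $h\ge1$ on $i(-1,1)$. The main computation, which is the expected obstacle, is the extremal problem: minimize $h(k)$ over positive harmonic $h$ on $\mD$ with $h(\bar\lambda)=h(\lambda)$ and $h\ge1$ on the imaginary diameter. By symmetry we may also symmetrize under $\lambda\mapsto-\lambda$, since averaging $h(\lambda)$ and $h(-\lambda)$ keeps the constraints and the value at $k$ by Step 2. The Herglotz representation then reduces this to a measure on $\partial\mD$ invariant under both reflections, and Smirnov's computation in \cite[\S10]{FRY23} gives the optimal bound $h(k)\ge 1/(1+k^2)$, attained by $\operatorname{Re}\frac{1-\lambda^2}{1+\lambda^2}$-type kernels. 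Hence $1/d(k)\ge 1/(1+k^2)$, that is, $\dim_{qA}(\Gamma)\le 1+k^2$.

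To handle a bounded portion of the unbounded image, note that $\Gamma$ is compact. We cover it by finitely many images of bounded arcs and use finite stability of $\dim_{qA}$ together with bi-Lipschitz invariance under M\"obius maps on compacta.
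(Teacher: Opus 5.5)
Your proposal has two genuine gaps: one in the set-up (Steps 1–2) and one in the final estimate (Step 3).

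\textbf{The parameter orientation is reversed.} Suppose $f(\lambda,z)=\overline{f(\overline{\lambda},\overline{z})}$. Then for every real $\lambda$ the map $f_\lambda$ commutes with $z\mapsto\overline{z}$, so it maps $\widehat{\mR}$ onto itself. Hence $f_k(\widehat{\mR})$ can never be a M\"obius image of a genuine quasicircle. For $E\subset\mR$, the sets $E_\lambda$ stay in $\mR$ along the \emph{real} diameter, not the imaginary one.

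The condition $\mu(\overline z)=\overline{\mu(z)}$ that you call antisymmetry is actually the symmetric condition. Smirnov's antisymmetry is $\mu(\overline z)=-\overline{\mu(z)}$. The associated symmetric motion (Lemma \ref{symbeltrami}) places the quasicircle at the imaginary parameter: $\Gamma=f_{ik}([a,b])$.

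The evenness $d(\lambda)=d(-\lambda)$ asserted in Step 2 is not justified. Even if it held, replacing $h$ by $\frac12\bigl(h(\lambda)+h(-\lambda)\bigr)$ does not preserve the value $h(k)$. So this averaging cannot give a lower bound for all admissible $h$ at $k$.

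\textbf{The extremal bound in Step 3 is false, because you apply the harmonic estimate to $1/d$ itself.} Write $P$ for the Poisson kernel. The function you cite,
$h(\lambda)=\operatorname{Re}\frac{1-\lambda^2}{1+\lambda^2}=\frac12\bigl(P(\lambda,i)+P(\lambda,-i)\bigr)$,
is positive and symmetric, and $h(iy)=\frac{1+y^2}{1-y^2}\ge 1$ on the imaginary diameter. Yet $h(k)=\frac{1-k^2}{1+k^2}<\frac{1}{1+k^2}$.

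Fixing the orientation does not rescue this. The symmetric Harnack inequality applied to $u=1/\dim_{qA}(E_\lambda)$ with $u(0)=1$ yields only $\dim_{qA}\Gamma\le\frac{1+k^2}{1-k^2}$. This is sharp for that class: $\operatorname{Re}\frac{1+\lambda^2}{1-\lambda^2}$ equals $1$ at $0$ and $\frac{1-k^2}{1+k^2}$ at $ik$.

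The missing idea is to use $\dim_{qA}\le 2$.
\begin{itemize}
\item Every $h$ in the family representing $u$ satisfies $h\ge u\ge\frac12$, so $v=u-\frac12$ is again inf-sym-harmonic.
\item Since $\dim_{qA}[a,b]=1$, we have $v(0)=\frac12$.
\item Lemma \ref{L:HarnackSym} then gives $v(ik)\ge\frac12\cdot\frac{1-k^2}{1+k^2}$.
\item Therefore $1/\dim_{qA}\Gamma\ge\frac12+\frac12\cdot\frac{1-k^2}{1+k^2}=\frac{1}{1+k^2}$.
\end{itemize}
This is exactly the paper's argument.

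Your reduction to bounded arcs via finite stability and M\"obius/bi-Lipschitz invariance is fine and matches the paper.
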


The remainder of the paper is organized as follows. We review the notions of holomorphic motions, quasiconformal mappings, inf-harmonic functions and Assouad and quasi-Assouad dimensions in Section \ref{sec1}. Section \ref{sec2} is devoted to the proof of a new characterization of quasi-Assouad dimension. In Section \ref{sec3} we use this characterization to prove our first main result, Theorem \ref{theorem:main1}. Section \ref{sec4} contains the proof of the symmetric version Theorem \ref{theorem:main2}. Lastly, the applications to quasiconformal mappings are treated in Section \ref{sec5}.

\section{Preliminaries}
\label{sec1}

\subsection{Holomorphic motions and quasiconformal mappings}

Recall from Definition \ref{def:motion} that a holomorphic motion (of the Riemann sphere $\hC$) is a map $f: \mD \times \hC \to \hC$ such that:

\begin{enumerate}[(i)]
\item for each fixed $z \in \hC$, the map $\lambda \mapsto f(\lambda,z)$ is holomorphic on $\mD$;
\item for each fixed $\lambda \in \mD$, the map $z \mapsto f(\lambda,z)$ is injective on $\hC$;
\item $f(0,z)=z$ for all $z \in \hC$.
\end{enumerate}
For $\lambda \in \mD$ we define $f_\lambda: \hC \to \hC$ by $f_\lambda(z):=f(\lambda,z)$. For $E \subset \hC$ we also set $E_\lambda:=f_\lambda(E)$. We also assume that each $f_\lambda$ fixes $0,1$ and $\infty$.

As mentioned in the introduction, holomorphic motions are closely related to quasiconformal mappings.

\begin{definition}
A homeomorphism $f:\hC \to \hC$ is called \textit{quasiconformal} if

\begin{enumerate}[\normalfont(i)]
\item $f$ is orientation-preserving;
\item its distributional Wirtinger derivatives $\partial f/\partial z$ and $\partial f/\partial\overline{z}$ both belong to $L^2_{loc}(\mC)$, and
\item $f$ satisfies the \emph{Beltrami equation}:
\[
\frac{\partial f}{\partial \overline{z}}=\mu_f \, \frac{\partial f}{\partial z}\quad\text{a.e. on $\hC$},
\]
where $\mu_f$ is a measurable function on $\hC$ such that $\|\mu_f\|_{\infty} <1$.
\end{enumerate}
The function $\mu_f$ is called the \emph{Beltrami coefficient} or \emph{complex dilatation} of~$f$. We shall say that the mapping $f$ is \emph{$k$-quasiconformal} if $\|\mu_f\|_\infty\le k$.
\end{definition}

Quasiconformal mappings can obviously be defined on any domain in $\hC$, but in this paper we shall only consider quasiconformal mappings of the whole sphere. We also assume that each quasiconformal mapping $f:\hC \to \hC$ fixes $0,1$ and $\infty$.

The following fundamental result on the existence and uniqueness of solutions to the Beltrami equation is usually referred to as the Measurable Riemann Mapping Theorem, see e.g. \cite[Theorem~5.3.4]{AIM09}.

\begin{theorem}\label{T:MRMT}
Let $\mu$ be a measurable function on $\hC$ with $\|\mu\|_\infty <1$. Then there exists a unique quasi\-conformal mapping $f:\hC \to \hC$ with $\mu_f=\mu$ a.e. on $\hC$.
\end{theorem}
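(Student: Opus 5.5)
The statement is the Measurable Riemann Mapping Theorem, a classical result quoted from \cite[Theorem~5.3.4]{AIM09}. I would follow the standard route through the Beurling transform, first on $\mC$ for compactly supported $\mu$ and then in general.

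\textbf{Compactly supported $\mu$.} Let $\mu$ have compact support and $\|\mu\|_\infty\le k<1$.
\begin{enumerate}[(i)]
\item Look for $f(z)=z+\mathcal{C}\omega(z)$, where $\mathcal{C}$ is the Cauchy transform $\mathcal{C}\omega(z)=\frac{1}{\pi}\int\frac{\omega(\zeta)}{z-\zeta}\,dA(\zeta)$.
\item Then $\partial_{\overline z}f=\omega$ and $\partial_z f=1+\mathcal{S}\omega$, where $\mathcal{S}$ is the Beurling transform.
\item So the Beltrami equation becomes $\omega=\mu+\mu\,\mathcal{S}\omega$.
\item $\mathcal{S}$ is an isometry of $L^2$, and by Riesz--Thorin its $L^p$ norm tends to $1$ as $p\to2$. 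Fix $p>2$ with $k\|\mathcal{S}\|_{L^p}<1$.
\item A Neumann series then gives a unique solution $\omega\in L^p$, and it is supported in $\operatorname{supp}\mu$.
\item Since $p>2$, $\mathcal{C}\omega$ is H\"older continuous, so $f$ is continuous, $W^{1,p}_{loc}$, and satisfies $f(z)=z+O(1/z)$ at $\infty$.
\end{enumerate}

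\textbf{The main obstacle: $f$ is a homeomorphism.} This is where I expect the real work.
\begin{enumerate}[(i)]
\item First take $\mu$ smooth. Then $f$ is smooth by elliptic regularity.
\item Its Jacobian $|\partial_z f|^2(1-|\mu|^2)$ does not vanish. To see this, write $\partial_z f=e^{\sigma}$: one solves a linear equation for $\sigma$ by the same Beurling-transform method, and $\partial_z f$ is then determined by uniqueness.
\item So $f$ is a local diffeomorphism that is proper, because $f(z)\sim z$ at infinity. Hence it is a covering map of $\mC$, and therefore a diffeomorphism.
\item For general $\mu$, approximate by smooth $\mu_n\to\mu$ a.e. with $\|\mu_n\|_\infty\le k$.
\item The solutions $f_n$ are $K$-quasiconformal with uniform H\"older bounds, hence form an equicontinuous family, as do their inverses. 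By $L^p$ continuity of the Neumann series, $\omega_n\to\omega$.
\item So $f_n\to f$ locally uniformly. The inverses $f_n^{-1}$ also converge uniformly, and hence the limit $f$ is a homeomorphism.
\end{enumerate}

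\textbf{General $\mu$ on $\hC$.} Split $\mu=\mu\chi_{\mD}+\mu\chi_{\hC\setminus\mD}$.
\begin{enumerate}[(i)]
\item Solve for the part $\mu\chi_{\hC\setminus\mD}$ after conjugating by $z\mapsto1/z$, which turns it into a compactly supported coefficient. Call the resulting map $g$.
\item Transport the remaining part by the composition formula for Beltrami coefficients, which produces a compactly supported coefficient $\nu$ for the second map $h$.
\item Solve for $h$ with coefficient $\nu$ by the compactly supported case, and set $f=h\circ g$.
\item Normalize $f$ to fix $0,1,\infty$ by postcomposing with a M\"obius map.
\end{enumerate}

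\textbf{Uniqueness.} Suppose $f_1,f_2$ both solve the equation with the same $\mu$. Then $f_1\circ f_2^{-1}$ is a $W^{1,2}_{loc}$ homeomorphism with zero Beltrami coefficient. By Weyl's lemma it is conformal on $\hC$, so it is M\"obius. It fixes $0,1,\infty$, so it is the identity.

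This step relies on the chain rule for quasiconformal maps. That chain rule in turn needs the facts that quasiconformal maps preserve null sets and are differentiable almost everywhere, which I would quote from \cite{AIM09}.
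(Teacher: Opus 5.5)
The paper gives no proof of its own here and simply quotes \cite[Theorem~5.3.4]{AIM09}. Your outline is correct and is the standard argument of that reference: the Beurling transform and Neumann series for compactly supported $\mu$, the $e^{\sigma}$ trick plus smooth approximation for injectivity, the $z\mapsto 1/z$ splitting and composition for $\hC$, and Weyl's lemma for uniqueness. One small omission: before conjugating by $z\mapsto 1/z$, translate the principal solution so that it fixes $0$. This makes $g(\infty)=\infty$, so $g(\overline{\mD})$ is a compact subset of $\mC$ and $\nu$, which is supported there, is compactly supported as your next step requires.
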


It turns out that holomorphic motions can be viewed as families of quasiconformal mappings $\{f_\lambda\}_{\lambda \in \mD}$ depending holomorphically on $\lambda$. This fundamental result is often referred to as the Extended $\lambda$-lemma, see e.g. \cite[Theorem~12.3.2]{AIM09}.

\begin{theorem}\label{T:motions}
Let $f:\mathbb{D} \times \hC \to \hC$ be a function. The following statements are equivalent:
\begin{enumerate}[\normalfont(i)]
\item The map $f$ is a holomorphic motion;
\item For each $\lambda \in \mathbb{D}$, the map $f_\lambda : \hC \to \hC$ is $|\lambda|$-quasiconformal. Moreover, the $L^\infty(\hC)$-valued map $\lambda \mapsto \mu_\lambda$ is holomorphic on $\mathbb{D}$.
\end{enumerate}
\end{theorem}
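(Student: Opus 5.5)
The plan is to prove the two implications separately. The direction (ii)$\Rightarrow$(i) should follow from the Measurable Riemann Mapping Theorem (Theorem \ref{T:MRMT}) together with holomorphic dependence on parameters. The direction (i)$\Rightarrow$(ii) should come from a hyperbolic-geometry argument, the $\lambda$-lemma, which gives quasisymmetry, followed by a Schwarz lemma applied pointwise to the Beltrami coefficients.

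\emph{(ii)$\Rightarrow$(i).} Suppose each $f_\lambda$ is a quasiconformal homeomorphism fixing $0,1,\infty$, with $\lambda\mapsto\mu_\lambda$ holomorphic into the unit ball of $L^\infty$. Injectivity of $f_\lambda$ is automatic. Since $\|\mu_0\|_\infty\le 0$, the map $f_0$ is a conformal homeomorphism of $\hC$ fixing three points, hence the identity by the uniqueness in Theorem \ref{T:MRMT}. It remains to show that $\lambda\mapsto f_\lambda(z)$ is holomorphic for fixed $z$. Here I would write the normalized solution through the Beurling transform $S$ and the Cauchy transform. Locally, $\partial_{\bar z} f_\lambda=(I-\mu_\lambda S)^{-1}\mu_\lambda$ is given by a Neumann series $\sum(\mu_\lambda S)^n\mu_\lambda$ in $L^p$, with $p$ close to $2$ so that $\|\mu_\lambda\|_\infty\|S\|_{L^p}<1$. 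Each term is holomorphic in $\lambda$, and the convergence is locally uniform. Hence $f_\lambda(z)$ is holomorphic, after handling the point at infinity and the three-point normalization by composing with Möbius maps whose coefficients are holomorphic in $\lambda$ (the Ahlfors--Bers argument).

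\emph{(i)$\Rightarrow$(ii).} This is the harder direction. Fix four distinct points $z_1,\dots,z_4$. The cross-ratio $\lambda\mapsto[f_\lambda(z_1),\dots,f_\lambda(z_4)]$ is holomorphic on $\mD$ and, by injectivity, omits $0,1,\infty$. By the Schwarz--Pick lemma for the hyperbolic metric of $\hC\setminus\{0,1,\infty\}$, its hyperbolic distance from the value at $\lambda=0$ is at most $\rho_{\mD}(0,\lambda)$. Letting the points vary, this gives a uniform modulus of continuity for $f_\lambda$ (joint continuity, as in \cite{MSS83}) and a uniform bound on how $f_\lambda$ distorts cross-ratios. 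So $f_\lambda$ is a quasisymmetric homeomorphism, hence quasiconformal by the metric definition and its equivalence with the analytic one. This gives $f_\lambda\in W^{1,2}_{loc}$ with some Beltrami coefficient $\mu_\lambda$, $\|\mu_\lambda\|_\infty<1$, but not yet with the sharp bound $|\lambda|$.

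For the sharp bound and for holomorphy, I would show that $\lambda\mapsto \partial_z f_\lambda$ and $\lambda\mapsto\partial_{\bar z}f_\lambda$ are holomorphic as $L^2_{loc}$-valued maps. To do this, pair $f_\lambda$ with a test function $\varphi$: the quantity $\int f_\lambda\,\partial\varphi$ is holomorphic in $\lambda$ by Fubini and Morera, using the local uniform boundedness from the first step. The local uniform $L^2$ bounds on the derivatives then upgrade this weak holomorphy to strong holomorphy. Consequently $\mu_\lambda=\partial_{\bar z}f_\lambda/\partial_z f_\lambda$ is, for a.e.\ $z$, a holomorphic function of $\lambda$ with values in $\mD$ that vanishes at $\lambda=0$. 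The Schwarz lemma gives $|\mu_\lambda(z)|\le|\lambda|$, and assembling the pointwise statements yields holomorphy of $\lambda\mapsto\mu_\lambda$ in $L^\infty$.

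I expect the main obstacle to be the a priori regularity step, namely proving that $f_\lambda$ is quasiconformal at all, including the measurability and choice of representatives needed so that the a.e.\ pointwise Schwarz argument makes sense. I would handle it first via the cross-ratio/quasisymmetry route above, and only then run the Schwarz lemma on $\mu_\lambda$.
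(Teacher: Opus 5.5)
The paper gives no argument of its own here and simply cites \cite[Theorem~12.3.2]{AIM09}. Your outline is essentially the standard proof given there: Schwarz--Pick on cross-ratios to get quasisymmetry and hence quasiconformality, weak-then-strong holomorphy of $\partial_z f_\lambda$ and $\partial_{\bar z} f_\lambda$ in $L^2_{loc}$, a pointwise Schwarz lemma on $\lambda\mapsto\mu_\lambda(z)$, and Ahlfors--Bers holomorphic dependence for the converse. It is correct modulo the technicalities you already flag. One small imprecision: in (ii)$\Rightarrow$(i), Möbius normalizations alone do not reduce a Beltrami coefficient that is not compactly supported to the compactly supported case. You need the genuine Ahlfors--Bers theorem, or you can truncate to $\mu_\lambda\chi_{\{|z|<R\}}$, use the compact case, and let $R\to\infty$ via locally uniform convergence of the normalized solutions and Vitali's theorem.
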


A simple consequence of Theorem \ref{T:motions} is that every quasiconformal mapping can be embedded into a holomorphic motion.

\begin{theorem}\label{T:embedding}
If $F:\hC\to\hC$ is a $k$-quasiconformal mapping,
then there exists a holomorphic motion $f:\mD \times \hC \to \hC$ such that $f_k=F$.
\end{theorem}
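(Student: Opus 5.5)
The plan is to construct the motion explicitly from the Beltrami coefficient of $F$, using the Measurable Riemann Mapping Theorem (Theorem \ref{T:MRMT}) and the Extended $\lambda$-lemma (Theorem \ref{T:motions}). If $k=0$ then $F$ is a conformal automorphism of $\hC$ fixing $0,1,\infty$, hence the identity, and the trivial motion $f(\lambda,z)=z$ works. So assume $0<k<1$ and let $\mu:=\mu_F$, so $\|\mu\|_\infty\le k$. For $\lambda\in\mD$ define
\[
\mu_\lambda:=\frac{\lambda}{k}\,\mu .
\]
Then $\|\mu_\lambda\|_\infty\le |\lambda|<1$, so Theorem \ref{T:MRMT} provides a unique quasiconformal map $f_\lambda:\hC\to\hC$ with $\mu_{f_\lambda}=\mu_\lambda$ a.e. We normalize it to fix $0,1,\infty$ by post-composing with a M\"obius map, which does not change the Beltrami coefficient. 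Set $f(\lambda,z):=f_\lambda(z)$.

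Next we check, via Theorem \ref{T:motions}, that $f$ is a holomorphic motion. Each $f_\lambda$ is $|\lambda|$-quasiconformal because $\|\mu_\lambda\|_\infty\le|\lambda|$. The $L^\infty$-valued map $\lambda\mapsto\mu_\lambda=(\lambda/k)\mu$ is affine-linear in $\lambda$, hence holomorphic into $L^\infty(\hC)$. By the implication (ii)$\Rightarrow$(i) of Theorem \ref{T:motions}, $f$ is a holomorphic motion. In particular $f_0$ has zero Beltrami coefficient and fixes $0,1,\infty$, so $f_0=\mathrm{id}$, consistent with condition (iii).

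Finally, at $\lambda=k$ we have $\mu_k=\mu_F$ a.e. Both $f_k$ and $F$ are normalized to fix $0,1,\infty$, so the uniqueness part of Theorem \ref{T:MRMT} gives $f_k=F$.

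The only delicate point is the normalization, so that uniqueness in Theorem \ref{T:MRMT} is applied under the same convention for both $f_k$ and $F$. Since the paper assumes throughout that quasiconformal maps fix $0,1,\infty$, this is automatic. Everything else is a direct citation.
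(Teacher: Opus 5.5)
Your proposal is correct and matches the paper's proof: you set $\mu_\lambda=\lambda\mu_F/k$, solve the Beltrami equation via Theorem \ref{T:MRMT}, invoke Theorem \ref{T:motions} to get a holomorphic motion, and use uniqueness to conclude $f_k=F$. Your separate treatment of $k=0$ and your explicit check of the normalization are small additions that tidy up points the paper leaves implicit, namely the division by $k$ and the fact that uniqueness requires both $f_k$ and $F$ to fix $0,1,\infty$.
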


\begin{proof}
For $\lambda \in \mD$, set $\mu_\lambda:= \lambda \mu_F/k$. Solving the corresponding Beltrami equation gives a quasiconformal mapping $f_\lambda:\hC \to \hC$ with Beltrami coefficient satisfying $\|\mu_\lambda\|_\infty \leq |\lambda|$. Theorem \ref{T:motions} then guarantees that the maps $f_\lambda$ define a holomorphic motion, and the uniqueness part of Theorem \ref{T:MRMT} gives $f_k=F$.
\end{proof}

Quasiconformal mappings possess many interesting properties, for instance they are quasisymmetric in the following sense.

\begin{theorem}\label{T:quasisym}
Given $k\in[0,1)$, there exists an increasing homeomorphism
$\eta:[0,\infty)\to[0,\infty)$ such that every $k$-quasiconformal
mapping $h:\hC\to\hC$ satisfies
\begin{equation}\label{E:quasisym}
\frac{|h(z_0)-h(z_1)|}{|h(z_0)-h(z_2)|}\le
\eta\Bigl(\frac{|z_0-z_1|}{|z_0-z_2|}\Bigr)
\quad(z_0,z_1,z_2\in\mC).
\end{equation}
Moreover, one can take
$$\eta(t):=C\max \{t^K, t^{1/K} \} \qquad (t \in [0,\infty)),$$
where $C=C(k)$ is a constant depending only on $k$ and $K:=(1+k)/(1-k)$.
\end{theorem}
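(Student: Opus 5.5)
The statement just made is Theorem \ref{T:quasisym}: $k$-quasiconformal maps of the sphere are $\eta$-quasisymmetric, with $\eta(t)=C\max\{t^K,t^{1/K}\}$. I would prove it in two steps. First establish a uniform local H\"older-type distortion estimate for normalized $k$-quasiconformal maps. Then transfer it to arbitrary triples $z_0,z_1,z_2$ by pre- and post-composing with similarities.

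\textbf{Reduction.} Fix $z_0,z_1,z_2\in\mC$ with $z_0\neq z_2$. Let $A(z)=z_0+(z_2-z_0)z$ and let $B$ be the affine map with $B(h(z_0))=0$ and $B(h(z_2))=1$. Then $g:=B\circ h\circ A$ is $k$-quasiconformal, since similarities do not change $\|\mu\|_\infty$. It fixes $0$, $1$ and $\infty$. The left side of \eqref{E:quasisym} equals $|g(w)|$ with $w=(z_1-z_0)/(z_2-z_0)$, and $|w|=t$ is the ratio on the right. So it suffices to show that every normalized $k$-quasiconformal map $g$ (fixing $0,1,\infty$) satisfies
\[
|g(w)|\le C\max\{|w|^K,|w|^{1/K}\}\qquad (w\in\mC).
\]

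\textbf{Case $|w|\le 1$.} Here I would use the embedding of Theorem \ref{T:embedding}. It gives a holomorphic motion $f$ with $f_k=g$, normalized to fix $0,1,\infty$. For fixed $w\notin\{0,1,\infty\}$, the function $\lambda\mapsto f_\lambda(w)$ is holomorphic on $\mD$ and omits the values $0,1,\infty$, because each $f_\lambda$ is injective and fixes those points. By Schottky's theorem, or by comparing the hyperbolic metric of $\mC\setminus\{0,1\}$ near $0$, a holomorphic map $\phi:\mD\to\mC\setminus\{0,1\}$ satisfies
\[
\log\frac{1}{|\phi(\lambda)|}\ \ge\ \frac{1-|\lambda|}{1+|\lambda|}\,\log\frac{1}{|\phi(0)|}-c
\]
for $|\phi(0)|$ small, with $c$ absolute. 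This is the standard Harnack-type estimate for positive harmonic majorants of $\log(1/|\phi|)$ near the puncture. Applying it with $\phi(0)=w$ and $\lambda=k$ gives $|g(w)|\le e^{c}|w|^{1/K}$.

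\textbf{Case $|w|\ge1$.} I would apply the same argument to $\phi(\lambda)=1/f_\lambda(w)$, which also omits $0,1,\infty$. This gives $|g(w)|\le e^{c}|w|^{K}$. For $w$ in a fixed compact neighbourhood of the unit circle away from $0$ and $\infty$, the same hyperbolic-distance argument gives a uniform bound instead. Such $w$ lie at bounded hyperbolic distance from points where the estimate is already known, or one can use directly that the family $\{f_\lambda(w)\}_{|\lambda|\le k}$ lies in a compact subset of $\mC\setminus\{0\}$. Taking $C$ to be the largest of the resulting constants finishes the proof, and $C$ depends only on $k$.

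\textbf{Main obstacle.} The key point is the quantitative Schottky/hyperbolic-metric estimate with the exact exponent $(1-|\lambda|)/(1+|\lambda|)=1/K$. The cleanest route I would try is the following. Choose a positive harmonic function $u$ on $\mD$ with $u\le\log(1/|\phi|)+c$. One such function comes from $\log$ of the hyperbolic density of the thrice-punctured sphere, or equivalently from the $\lambda$-lemma growth bounds in \cite{AIM09}. Then apply Harnack's inequality $u(\lambda)\ge\frac{1-|\lambda|}{1+|\lambda|}u(0)$. Handling the additive constant $c$ and the region where $|w|$ is close to $1$ is routine bookkeeping that gets absorbed into $C(k)$.
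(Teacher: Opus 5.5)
The paper gives no proof of this statement; it quotes \cite[Corollary~3.10.4]{AIM09}. Two parts of your plan are sound: the reduction to a normalized map $g$, and reading off the exponent $1/K$ from Harnack's inequality at $\lambda=k$ for a motion through $g$. But as written the proof has two genuine gaps.

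\textbf{The key estimate is not established.} Everything rests on $\log(1/|\phi(\lambda)|)\ge\frac{1-|\lambda|}{1+|\lambda|}\log(1/|\phi(0)|)-c$, and your justification does not work. The classical Schottky theorem does not give this exact exponent. What you need is its sharp form (Jenkins, Hempel), $\log|\psi(\lambda)|\le\frac{1+|\lambda|}{1-|\lambda|}\bigl(\log^+|\psi(0)|+c_0\bigr)$. Apply it to $1/\phi$ precomposed with the automorphism of $\mD$ exchanging $0$ and $\lambda$.

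Your proposed mechanism cannot work: a positive harmonic $u$ on $\mD$ with $u\le\log(1/|\phi|)+c$ everywhere need not exist. Indeed $\phi$ may be a universal covering $\mD\to\mC\setminus\{0,1\}$, and such maps do arise as orbits of holomorphic motions. Then $\log(1/|\phi|)$ is unbounded below. Moreover, $\log$ of the hyperbolic density is subharmonic, not harmonic. And a harmonic \emph{majorant} of $\log(1/|\phi|)$, as in your Case $|w|\le1$, gives an inequality in the wrong direction.

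A correct route is to lift. Write $\phi=\Lambda\circ F$, where $F:\mD\to\mathbb{H}$ is holomorphic and $\Lambda(\tau)=16e^{i\pi\tau}(1+O(e^{i\pi\tau}))$ is the elliptic modular function. When $|\phi(0)|$ is small, use the cusp at $\infty$ to choose the lift with $\pi\operatorname{Im}F(0)\ge\log(1/|\phi(0)|)$. Then $\operatorname{Im}F$ is positive harmonic on all of $\mD$, so Harnack gives $\operatorname{Im}F(\lambda)\ge\operatorname{Im}F(0)/K\ge1$. On $\{\operatorname{Im}\tau\ge1\}$ one has $|\Lambda(\tau)|\le Ce^{-\pi\operatorname{Im}\tau}$. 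So the comparison with $\log(1/|\phi|)$ is needed only at the single point $F(\lambda)$, never globally.

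A Schwarz--Pick argument also works, but only with the second-order bound $\rho(z)\ge1/(|z|(\log(1/|z|)+C_0))$ near $0$. The first-order asymptotics $\rho\sim1/(|z|\log(1/|z|))$ only yield $|g(w)|\le|w|^{(1-o(1))/K}$.

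\textbf{Your Case $|w|\ge1$ is wrong as written.} Substituting $\phi=1/f_\lambda(w)$ into your estimate gives $\log|g(w)|\ge\frac1K\log|w|-c$. That is the \emph{lower} bound $|g(w)|\ge e^{-c}|w|^{1/K}$, not $|g(w)|\le e^{c}|w|^{K}$; inversion alone can never turn the factor $1/K$ into $K$.

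The fix is to apply that lower bound to $g^{-1}$, which is again $k$-quasiconformal and fixes $0,1,\infty$, at the point $g(w)$. When $|g(w)|$ is large this gives $|w|\ge e^{-c}|g(w)|^{1/K}$, i.e. $|g(w)|\le e^{cK}|w|^K$; otherwise there is nothing to prove. Equivalently, precompose with the automorphism of $\mD$ exchanging $0$ and $k$.

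A smaller point concerns the intermediate range of $|w|$. Your ``bounded hyperbolic distance'' remark fails near $w=1$, which is itself a puncture. Compactness of $\{f_\lambda(w)\}$ would also have to be uniform over all $g$. What you want there is Schottky's theorem itself, whose bound on $|\phi(k)|$ depends only on an upper bound for $|\phi(0)|$.

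With these repairs your holomorphic-motion argument becomes a valid, self-contained alternative to the textbook citation, in the spirit of the paper.
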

See e.g. \cite[Corollary~3.10.4]{AIM09}.

Quasiconformal maps are also locally H\"{o}lder continuous, see \cite[Corollary~3.10.3]{AIM09}.

\begin{theorem}\label{T:holder}
Given $k\in[0,1)$, there exists a constant $C'=C'(k)$ depending only on $k$ such that for every $k$-quasiconformal mapping $h:\hC \to \hC$ and every disk $B \subset \mC$, we have
$$|h(z)-h(w)| \leq C' \frac{\operatorname{diam}(h(B))}{(\operatorname{diam}{B})^{1/K}} |z-w|^{1/K} \qquad (z,w \in B),$$
where $K:=(1+k)/(1-k)$.
\end{theorem}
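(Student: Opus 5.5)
The plan is to derive the estimate from the quasisymmetry of $k$-quasiconformal mappings (Theorem \ref{T:quasisym}). The idea is to compare $|h(z)-h(w)|$ with $|h(z)-h(z_2)|$ for an auxiliary point $z_2$ in $\overline{B}$ that is far from $z$, at a distance comparable to $\operatorname{diam} B$.

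Let $B$ have center $c$ and radius $r$, so that $\operatorname{diam} B = 2r$. Fix $z,w \in B$; we may assume $z \neq w$, since otherwise there is nothing to prove.

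\emph{Choice of $z_2$.} Choose $z_2 \in \partial B$ with $|z-z_2| \geq r$. For instance, take the boundary point diametrically opposite to the direction of $z-c$, or any boundary point if $z=c$.

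\emph{Applying quasisymmetry.} Apply \eqref{E:quasisym} with $z_0=z$, $z_1=w$ and the point $z_2$ above. This gives
\[
|h(z)-h(w)| \leq \eta\Bigl(\frac{|z-w|}{|z-z_2|}\Bigr)\, |h(z)-h(z_2)|.
\]
Since $h$ is continuous, $|h(z)-h(z_2)| \leq \operatorname{diam} h(\overline{B}) = \operatorname{diam} h(B)$. Because $\eta$ is increasing and $|z-z_2|\ge r$, the argument of $\eta$ may be replaced by $t:=|z-w|/r$. This quantity satisfies $t \leq 2r/r = 2$.

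\emph{Bounding $\eta$.} We use the explicit form $\eta(t)=C\max\{t^K,t^{1/K}\}$.
\begin{itemize}
\item For $t\le 1$, the maximum is $t^{1/K}$.
\item For $1\le t\le 2$, we have $t^K = t^{K-1/K}\,t^{1/K} \le 2^{K}t^{1/K}$.
\end{itemize}
Hence $\eta(t) \le C2^K t^{1/K}$ for all $t\in[0,2]$. Finally,
\[
t^{1/K} = \frac{|z-w|^{1/K}}{r^{1/K}} = 2^{1/K}\,\frac{|z-w|^{1/K}}{(\operatorname{diam} B)^{1/K}}.
\]
Combining these bounds yields the claim with $C' := 2^{K+1}C(k)$, which depends only on $k$.

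There is no serious obstacle. The only points needing care are two: the auxiliary point $z_2$ must be placed inside $\overline{B}$, so that $|h(z)-h(z_2)|$ is controlled by $\operatorname{diam} h(B)$, yet at distance at least $r$ from $z$; and the branch $t^K$ of $\eta$ must be absorbed, which uses the bound $t\le 2$.
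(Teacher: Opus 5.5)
Your proof is correct. The paper gives no argument for this statement; it simply quotes it from \cite[Corollary~3.10.3]{AIM09}. You instead derive it from Theorem~\ref{T:quasisym}, which the paper also quotes.

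Your route has three steps. You compare $|h(z)-h(w)|$ with $|h(z)-h(z_2)|\le\operatorname{diam} h(B)$ for a boundary point $z_2$ with $|z-z_2|\ge r$. You then use monotonicity of $\eta$ to pass to $t=|z-w|/r\le 2$. Finally, you absorb the $t^K$ branch of $\eta(t)=C\max\{t^K,t^{1/K}\}$ via $t^K\le 2^{K}t^{1/K}$ on $[1,2]$.

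What this buys is that the H\"older estimate becomes a formal consequence of the power-type quasisymmetry function already stated in the paper. You also get an explicit constant $C'=2^{K+1}C(k)$, so the paper really needs only one external distortion result rather than two.

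Like Theorem~\ref{T:quasisym} itself, your argument implicitly uses the paper's standing normalization $h(\infty)=\infty$. This ensures that $h(\overline{B})\subset\mC$ and that the quasisymmetry inequality applies to points of $\mC$. The normalization holds for the map $f_{\lambda_0}\circ f_\lambda^{-1}$ to which the estimate is applied in Lemma~\ref{lemma2}, so nothing is lost in the paper's setting.
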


We will also need the following simple consequence of Theorem \ref{T:quasisym}, see \cite[Corollary 3.7]{FRY23}.

\begin{corollary}
\label{C:quasisym}
Given $k\in[0,1)$, there exists $\delta=\delta(k)>0$ such that every $k$-quasiconformal mapping $f:\hC\to\hC$ has the following property:

If $z_0\in\mC$ and $D$ is an open disk with center $z_0$,
then $f(D)$ contains the open disk with centre $f(z_0)$
and radius $\delta \operatorname{diam} f(D)$.
\end{corollary}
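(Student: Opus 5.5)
The statement to prove is Corollary \ref{C:quasisym}. The plan is to apply the quasisymmetry estimate of Theorem \ref{T:quasisym} directly, using only the fact that $f$ is a homeomorphism of $\hC$ fixing $\infty$.

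Let $D$ be the open disk with center $z_0$ and radius $r$. Since $f$ is a homeomorphism of $\hC$ fixing $\infty$, the image $f(D)$ is a bounded Jordan domain containing $f(z_0)$. Its boundary is $f(\partial D)$. Set $s:=\operatorname{dist}(f(z_0),\partial f(D))=\min_{|w-z_0|=r}|f(w)-f(z_0)|$. The open disk with center $f(z_0)$ and radius $s$ is connected, contains $f(z_0)$ and misses $\partial f(D)$, so it lies in $f(D)$. It therefore suffices to show that $s\ge\delta\operatorname{diam} f(D)$ for some $\delta=\delta(k)>0$.

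To bound the diameter from above, pick $w_1,w_2\in\overline D$ with $|f(w_1)-f(w_2)|=\operatorname{diam} f(D)$ (closure and continuity let the diameter be attained). Then $\operatorname{diam} f(D)\le 2\max_{w\in\overline D}|f(w)-f(z_0)|$. For any $w\in\overline D$ and any $w'$ with $|w'-z_0|=r$, apply \eqref{E:quasisym} with $z_1=w$ and $z_2=w'$. This gives
\[
|f(w)-f(z_0)|\le \eta\Bigl(\frac{|w-z_0|}{r}\Bigr)|f(w')-f(z_0)|\le \eta(1)\,|f(w')-f(z_0)|,
\]
using that $\eta$ is increasing and $|w-z_0|\le r$. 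The case $w=z_0$ is trivial.

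Now choose $w'$ on $\partial D$ attaining the minimum $s$. This yields $\max_{\overline D}|f(w)-f(z_0)|\le \eta(1)s$, hence $\operatorname{diam} f(D)\le 2\eta(1)s$. So the corollary holds with $\delta:=1/(2\eta(1))=1/(2C(k))$, which depends only on $k$.

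There is no real obstacle here. The only points needing care are that the inscribed disk of radius $s$ lies inside $f(D)$, which follows from the Jordan domain and connectedness argument above, and that $\eta$ from Theorem \ref{T:quasisym} is uniform in $f$ for fixed $k$, which is exactly what that theorem states.
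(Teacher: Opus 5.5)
Your argument is correct and is essentially the intended one. The paper gives no proof of its own; it cites \cite[Corollary 3.7]{FRY23} as a simple consequence of Theorem \ref{T:quasisym}, and that consequence is exactly your comparison $\max_{\overline D}|f(w)-f(z_0)|\le \eta(1)\min_{\partial D}|f(w)-f(z_0)|$, which gives $\delta=1/(2\eta(1))$ (your use of the standing normalization $f(\infty)=\infty$ to make $f(D)$ bounded and \eqref{E:quasisym} applicable matches the paper's conventions).
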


\subsection{Inf-harmonic functions}

Recall from Definition \ref{def:infh} that a positive function $u:D \to [0,\infty)$ defined on a domain $D \subset \mC$ is inf-harmonic if there exists a family $\mathcal{H}$ of positive harmonic functions on $D$ such that $u(\lambda) = \inf_{h \in \mathcal{H}} h(\lambda)$ for all $\lambda \in D$.

We will need the following elementary properties of inf-harmonic functions, see \cite{FRY23} for the proofs.

\begin{proposition}
\label{P:infharmonic}
\begin{enumerate}[\normalfont(i)]
\item If $u$ is an inf-harmonic function on a domain $D \subset \mC$ and $u \not\equiv 0$, then $u(\lambda)>0$ for all $\lambda \in D$ and $u$ satisfies Harnack's inequality:
$$
\frac{1}{ \tau_D(\lambda_1,\lambda_2)} \leq \frac{u(\lambda_1)}{u(\lambda_2)} \leq \tau_D(\lambda_1,\lambda_2)
\quad(\lambda_1,\lambda_2\in D),
$$
where $\tau_D(\lambda_1,\lambda_2)$ denotes the Harnack distance between $\lambda_1$ and $\lambda_2$.
\item If $u$ is an inf-harmonic function on a domain $D \subset \mC$, then it is continuous and superharmonic on $D$.

\item Let $(D_n)_{n\ge1}$ be an increasing sequence of domains, and let $D:=\cup_{n\ge1}D_n$. For each $n$, let $u_n$ be an inf-harmonic function on  $D_n$. Then either $u_n\to\infty$ locally uniformly on $D$, or else some subsequence $u_{n_j}\to u$ locally uniformly on $D$, where $u$ is inf-harmonic on $D$.
\end{enumerate}
\end{proposition}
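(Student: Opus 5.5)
The statement to prove is Proposition~\ref{P:infharmonic}. The plan is to derive everything from Harnack's inequality for each individual member of the family $\mathcal{H}$, and then pass to the infimum. For the compactness statement (iii) I will use normal-family arguments.

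\emph{Part (i).} Let $u=\inf_{h\in\mathcal H}h$ with each $h\ge 0$ harmonic on $D$. Each $h$ satisfies $h(\lambda_1)\le \tau_D(\lambda_1,\lambda_2)\,h(\lambda_2)$; this is the classical Harnack inequality, trivially true if $h\equiv 0$. Since $u\le h$, we get
\[
u(\lambda_1)\le \tau_D(\lambda_1,\lambda_2)\,h(\lambda_2)
\]
for every $h\in\mathcal H$. Taking the infimum over $h$ gives $u(\lambda_1)\le\tau_D(\lambda_1,\lambda_2)\,u(\lambda_2)$, and the reverse inequality follows by symmetry. If $u(\lambda_0)=0$ at some point, the inequality with $\lambda_2=\lambda_0$ forces $u\equiv0$. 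Hence $u\not\equiv0$ implies $u>0$ everywhere, and the two-sided Harnack inequality holds.

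\emph{Part (ii).} If $u\equiv0$ there is nothing to prove. Otherwise, (i) gives
\[
\tau_D(\lambda,\lambda_0)^{-1}\le u(\lambda)/u(\lambda_0)\le \tau_D(\lambda,\lambda_0).
\]
Since $\tau_D(\lambda,\lambda_0)\to1$ as $\lambda\to\lambda_0$, this shows $u$ is continuous. For superharmonicity, take a closed disk $\overline{\Delta}(a,r)\subset D$. For each $h\in\mathcal H$, the mean value property and $u\le h$ give
\[
\frac1{2\pi}\int_0^{2\pi}u(a+re^{i\theta})\,d\theta\le \frac1{2\pi}\int_0^{2\pi}h(a+re^{i\theta})\,d\theta=h(a).
\]
Taking the infimum over $h$ yields the super-mean-value inequality at $a$. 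Combined with continuity, this shows $u$ is superharmonic.

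\emph{Part (iii).} Fix $\lambda_0\in D_1$. First I will note that Harnack distances decrease along the sequence: $\tau_{D_n}\le\tau_{D_m}$ on $D_m$ for $n\ge m$, because a larger domain has fewer constraints. Given a compact $K\subset D$, enlarge it to a compact connected set containing $\lambda_0$. By compactness and monotonicity it lies in some $D_m$, so (i) gives constants $C_K$, uniform in $n\ge m$, with
\[
C_K^{-1}u_n(\lambda_0)\le u_n(\lambda)\le C_K u_n(\lambda_0)\qquad(\lambda\in K).
\]
This gives the dichotomy. If $u_n(\lambda_0)\to\infty$, then $u_n\to\infty$ uniformly on $K$. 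If not, some subsequence has $u_{n_j}(\lambda_0)$ bounded, and then the $u_{n_j}$ are locally uniformly bounded. They are also locally equicontinuous, via
\[
|u_n(\lambda)-u_n(\lambda')|\le(\tau_{D_m}(\lambda,\lambda')-1)\sup_K u_n.
\]
Arzelà--Ascoli then produces a further subsequence converging locally uniformly to some $u$ on $D$.

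The main obstacle is showing that this limit $u$ is inf-harmonic. I would prove that $u$ is the infimum of its positive harmonic majorants on $D$. Fix $a\in D$. For each large $j$, pick $h_j$ in the family for $u_{n_j}$ with $h_j(a)\le u_{n_j}(a)+1/j$. Each $h_j$ is a positive harmonic function on $D_{n_j}$, bounded at $a$, hence locally uniformly bounded by Harnack, again uniformly in $j$ on compacts. By the normal-family theorem for harmonic functions, a subsequence converges locally uniformly on $D$ to a harmonic $h\ge0$. Passing to the limit in $h_j\ge u_{n_j}$ gives $h\ge u$, and passing to the limit in the choice of $h_j$ gives $h(a)=u(a)$. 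The family of all such $h$, one for each $a\in D$, therefore has infimum exactly $u$, so $u$ is inf-harmonic.
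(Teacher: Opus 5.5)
Your proof is correct. Harnack's inequality for each $h\in\mathcal H$ passes to the infimum, giving (i), and with the super-mean-value inequality also (ii). In (iii), the Harnack-based Arzel\`a--Ascoli argument, together with a harmonic majorant $h\ge u$ touching $u$ at each point $a$ (obtained as a normal limit of near-minimizing $h_j$), shows the limit is inf-harmonic.

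The paper gives no argument of its own here; it only refers to \cite{FRY23}, and your route is the standard one for this statement. The two facts you use without comment follow from Harnack's inequality on disks and chaining: that $\tau_{D_m}(\cdot,\lambda_0)$ is bounded on compacts, and that $\tau_{D_m}(\lambda,\lambda')\to 1$ uniformly as $|\lambda-\lambda'|\to 0$ on compacts.
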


We conclude this section with an implicit function theorem for inf-harmonic functions proved in \cite[Corollary 4.6]{FRY23}.

\begin{theorem}
\label{C:implicit}
Let $D \subset \mC$ be a domain, and let $a_1,\dots,a_n:D\to(0,1)$ be functions such that
$\log (1/a_j)$ is inf-harmonic on $D$ for each $j$.
Let $c\in(0,n)$ and, for each $\lambda\in D$, let $s(\lambda)$
be the unique solution of the equation
$$
\sum_{j=1}^na_j(\lambda)^{s(\lambda)}=c.
$$
Then  $1/s$ is an inf-harmonic function on $D$.
\end{theorem}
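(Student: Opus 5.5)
The statement to prove is Theorem \ref{C:implicit}, the implicit function theorem for inf-harmonic functions. The plan is to express $1/s(\lambda)$ directly as an infimum of positive harmonic functions. The tool is a variational formula for $1/s$ built from the weighted-entropy identity for $\sum_j a_j^s = c$.

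For a fixed $\lambda$, consider probability vectors $p=(p_1,\dots,p_n)$ with $p_j\ge 0$ and $\sum_j p_j=1$. Concavity of $\log$ (Jensen) gives, for any $s>0$,
$$\log\sum_j a_j^s \;\ge\; \sum_j p_j\bigl(s\log a_j - \log p_j\bigr),$$
with equality when $p_j=a_j^s/\sum_i a_i^s$. At $s=s(\lambda)$ the left side equals $\log c$. Writing $L_j:=\log(1/a_j)>0$, this becomes
$$ s\sum_j p_jL_j \;\ge\; H(p)-\log c, \qquad H(p):=-\sum_j p_j\log p_j .$$
Equality holds for the Gibbs vector. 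When $H(p)-\log c>0$ this yields $1/s \le \sum_j p_jL_j/(H(p)-\log c)$, again with equality at the Gibbs vector. The Gibbs vector does satisfy $H(p)>\log c$: its value $H(p)-\log c=s\sum_j p_jL_j$ is positive. Hence
$$\frac{1}{s(\lambda)}=\inf_{p\,:\,H(p)>\log c}\ \frac{\sum_j p_j L_j(\lambda)}{H(p)-\log c}.$$
Here $c<n$ guarantees that the index set is nonempty, since the uniform vector has $H=\log n>\log c$. The index set depends only on $p$, not on $\lambda$.

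Next, for each fixed admissible $p$, the function $\lambda\mapsto \sum_j p_jL_j(\lambda)/(H(p)-\log c)$ is a positive combination of inf-harmonic functions. Sums of inf-harmonic functions are inf-harmonic: take the infimum over pairs $h_1+h_2$ with $h_i\in\mathcal H_i$. Positive multiples are inf-harmonic as well. Each such function is therefore inf-harmonic. An infimum of a family of inf-harmonic functions is inf-harmonic, by taking the union of the families of harmonic functions. So $1/s$ is inf-harmonic on $D$.

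One also has to check that $s(\lambda)$ is well defined. The map $t\mapsto\sum_j a_j^t$ is strictly decreasing from $n$ to $0$ on $[0,\infty)$ because each $a_j\in(0,1)$. Since $c\in(0,n)$, there is a unique solution $s(\lambda)>0$, and it is finite. The main point needing care is the variational identity itself: both the inequality direction and the attainment of equality at the Gibbs vector must be verified. This is routine Jensen/Gibbs computation, so I expect no serious obstacle. One should note that the bound $1/s\le\cdots$ is derived only for $p$ with $H(p)>\log c$, so terms with nonpositive denominators are simply excluded from the infimum.
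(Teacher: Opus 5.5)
Your proof is correct and complete. Jensen's inequality gives $s\sum_j p_jL_j\ge H(p)-\log c$ for every probability vector $p$. The Gibbs vector $p^*_j=a_j^{s}/c$ is admissible and gives equality. The admissible set $\{p: H(p)>\log c\}$ is independent of $\lambda$. Hence $1/s$ is an infimum of nonnegative combinations of the $L_j=\log(1/a_j)$, and so it is inf-harmonic.

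The paper gives no argument of its own here. It quotes the result from \cite[Corollary 4.6]{FRY23}, and for the symmetric analogue, Theorem \ref{C:implicitsym}, it invokes \cite[Lemma 4.7]{FRY23} applied to the inf-cone of inf-sym-harmonic functions. Your argument is a self-contained substitute for both citations. It uses nothing about harmonic functions beyond closure of the class under nonnegative linear combinations and arbitrary pointwise infima. So it proves the statement in any such inf-cone, and in particular gives Theorem \ref{C:implicitsym} verbatim, since nonnegative combinations of symmetric harmonic functions are symmetric.

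An equivalent but more abstract packaging runs through convex analysis. The function $\Phi(L):=1/s$ is positively homogeneous of degree one, increasing, and concave on $(0,\infty)^n$. Concavity holds because the superlevel set $\{\Phi\ge 1\}=\{L:\sum_j e^{-L_j}\le c\}$ is convex. Such a $\Phi$ is an infimum of nonnegative linear forms, by supporting hyperplanes. Your variational formula identifies these forms explicitly: the one touching at $L$ is $L'\mapsto \sum_j p^*_jL'_j/(s\sum_j p^*_jL_j)$, which is exactly $\langle\nabla\Phi(L),L'\rangle$. The explicit version avoids the convex-analysis step and makes the equality case transparent.

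One cosmetic point: for $p$ with some $p_j=0$, apply the Jensen step only after discarding those indices, with $0\log 0=0$. Alternatively, restrict the infimum to strictly positive $p$. This loses nothing, since the Gibbs vector has positive entries.
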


\subsection{Assouad dimension and quasi-Assouad dimension}
\label{subsection:assouad}

This section is devoted to the definition and basic properties of the Assouad and quasi-Assouad dimensions. For the sake of completeness we state the definition and results for $\mR^d$, although we will later focus only on the special case $d=2$. For more details the reader may consult the excellent book \cite{Fr21}.

In the following we use $B(x,r):=\{y \in \mR^d:|y-x|<r\}$ to denote the open ball centered at $x$ with radius $r>0$, and $\overline{B}(x,r)$ for the closed ball centered at $x$ with radius $r>0$. A finite collection of subsets $\{U_i\}$ of $\mR^d$ is called an r-\textit{cover} of a bounded set $E \subset \mR^d$ if $E \subset \cup_i U_i$ and if $\operatorname{diam}(U_i) \leq r$ for all $i$. We denote by $N_r(E)$ the smallest number of open sets required for an $r$-cover of $E$.

\begin{definition}
\label{def:assouad}
The \textit{Assouad dimension} of a non-empty bounded set $F \subset \mR^d$ is defined by

\[
\dim_A F = \inf \Big\{ \alpha \ge 0 : \exists C>0 \text{ s.t. }
\forall 0<r<R, \, \sup_{x \in F} N_r\big(\overline{B}(x,R) \cap F\big) \le C \left(\frac{R}{r}\right)^\alpha \Big\}.
\]
\end{definition}

We remark that Assouad dimension can also be defined in terms of packings instead of coverings.

\begin{definition}
A \textit{packing} of a set $E$ is a collection of finitely many disjoint closed balls centered in $E$. An $r$-\textit{packing} of $E$ is a packing of $E$ for which all the balls have radius equal to $r$.
\end{definition}

\begin{proposition}
\label{proposition:packing}
The Assouad dimension remains unchanged if the quantity $N_r\big(\overline{B}(x,R) \cap F\big)$ in Definition \ref{def:assouad} is replaced with the maximum number of balls in an $r$-\textit{packing} of $\overline{B}(x,R) \cap F$.
\end{proposition}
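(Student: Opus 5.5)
The plan is to compare covering numbers with packing numbers at the same scale $r$, losing only a constant that depends on $d$. That constant is harmless, because it can be absorbed into $C$ in Definition~\ref{def:assouad} without changing the exponent $\alpha$.

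Fix $x\in F$ and $0<r<R$, and write $A:=\overline{B}(x,R)\cap F$. Let $P_r(A)$ denote the maximal number of balls in an $r$-packing of $A$. The reverse inequality we need is $P_r(A)\le N_{r}(A)$. Given an $r$-cover $\{U_i\}$ of $A$ by sets of diameter at most $r$, and an $r$-packing $\{\overline{B}(y_j,r)\}$ with $y_j\in A$, each $U_i$ contains at most one center $y_j$: two centers of disjoint closed $r$-balls are at distance $>2r$, whereas $\operatorname{diam}U_i\le r$. Every center lies in some $U_i$, so $P_r(A)\le N_r(A)$. This shows that any exponent admissible for $N_r$ is admissible for $P_r$.

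For the converse, take a maximal $r$-packing $\{\overline{B}(y_j,r)\}_{j=1}^{P}$ of $A$; it is finite since $A$ is bounded. By maximality, every $y\in A$ lies within distance $2r$ of some $y_j$, so the open balls $B(y_j,3r)$ cover $A$. Each such ball has diameter $6r$ and can be covered by at most $c_d$ open sets of diameter at most $r$, where $c_d$ depends only on $d$ (for instance, small cubes slightly enlarged to open sets). Hence $N_r(A)\le c_d\,P_r(A)$. If $P_{r}(A)\le C(R/r)^\alpha$ for all $0<r<R$ and all $x\in F$, then $N_r(A)\le c_dC(R/r)^\alpha$, so $\alpha$ is admissible for the covering definition with constant $c_dC$.

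The two inequalities give the same set of admissible exponents, hence equal infima. The only point requiring care is whether the packing balls must lie inside $\overline{B}(x,R)$. In the argument above only the centers need to lie in $A$, which matches the definition of a packing of $A$, so no rescaling of $R$ is needed. If a stricter convention were used, a comparison between $N_r(\overline{B}(x,2R)\cap F)$ and the covering numbers at radius $R$ would cost at most a factor $C\cdot c_d 2^{\alpha}$, which is again absorbed into the constant.
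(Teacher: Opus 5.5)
Your proof is correct. The paper gives no argument for this proposition; it states it as a standard fact and refers the reader to \cite{Fr21}. Your two-sided comparison $P_r(A)\le N_r(A)\le c_d\,P_r(A)$, uniform in $x\in F$ and $0<r<R$, is the standard proof. Disjointness of the closed $r$-balls puts the centers more than $2r$ apart, so a set of diameter at most $r$ contains at most one center. Maximality of the packing makes the balls $B(y_j,3r)$ cover $A$, and each of these is covered by $c_d$ open sets of diameter at most $r$.

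One point in your favour: you compare at the same scale $r$, rather than using the usual $N_{2r}\lesssim P_r\lesssim N_{r/2}$. This leaves the admissible range of $(r,R)$ untouched. So the same argument also proves, word for word, the paper's analogous remark for quasi-Assouad dimension, where the constraint $r<R^{1+\theta}$ would otherwise need rechecking after rescaling $r$.

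Your final paragraph about a stricter convention is unnecessary, since the paper's definition of a packing only requires the centers to lie in the set.
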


We will also consider a related notion of dimension introduced in \cite{LL16} called quasi-Assouad dimension.

\begin{definition}
\label{def:quasiassouad}
The \textit{quasi-Assouad dimension} of a non-empty bounded set $F \subset \mR^d$ is defined by
$$\dim_{qA} F := \lim_{\theta \to 0} h_F(\theta),$$
where $h_F(\theta)$ is defined by
\[
h_F(\theta) := \inf \Big\{ \alpha \ge 0 : \exists C>0 \text{ s.t. }
\forall 0<r<R^{1+\theta}<R<1, \, \sup_{x \in F} N_r\big(\overline{B}(x,R) \cap F\big) \le C \left(\frac{R}{r}\right)^\alpha \Big\},
\]
for $\theta \in (0,1)$.
\end{definition}

\begin{remark}
The normalization $R<1$ is convenient because it ensures that $R^{1+\theta}<R$, but for bounded sets it makes no difference whether we allow unbounded values of $R$ or not.
\end{remark}

\begin{remark}
The quasi-Assouad dimension also remains unchanged if the quantity $N_r\big(\overline{B}(x,R) \cap F\big)$ in Definition \ref{def:quasiassouad} is replaced with the maximum number of balls in an $r$-\textit{packing} of $\overline{B}(x,R) \cap F$.
\end{remark}

\begin{proposition}
The various notions of dimension are related by the following inequalities, valid for any non-empty bounded set $F \subset \mR^d$:

$$\dim_H F \leq \dim_P F \leq \overline{\dim}_M F \leq \dim_{qA} F \leq \dim_A F.$$
Each of the above inequalities can be strict.
\end{proposition}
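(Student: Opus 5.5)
The statement to prove is the chain $\dim_H F \leq \dim_P F \leq \overline{\dim}_M F \leq \dim_{qA} F \leq \dim_A F$, together with strictness examples. The first two inequalities are classical and I will take them from the standard references (e.g. \cite{Fr21}). Only the last two involve the Assouad-type notions, and both reduce to comparing covering numbers directly from Definitions \ref{def:assouad} and \ref{def:quasiassouad}.

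For $\dim_{qA} F \le \dim_A F$, fix $\alpha>\dim_A F$. Then there is $C>0$ with $N_r(\overline{B}(x,R)\cap F)\le C(R/r)^\alpha$ for all $0<r<R$ and all $x\in F$. The range $0<r<R^{1+\theta}<R<1$ is a subset of this range, so the same bound holds there. Hence $h_F(\theta)\le\alpha$ for every $\theta$, and letting $\theta\to0$ and then $\alpha\downarrow\dim_A F$ gives the inequality. As a side remark, $h_F(\theta)$ is nondecreasing as $\theta\downarrow 0$, since a smaller $\theta$ enlarges the admissible range. So the limit in Definition \ref{def:quasiassouad} exists and is at most $\dim_A F$.

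For $\overline{\dim}_M F\le \dim_{qA}F$, fix $\theta\in(0,1)$ and $\alpha>h_F(\theta)$, with constant $C$. Because $F$ is bounded, it can be covered by some fixed number $M$ of closed balls of a fixed radius $R_0<1$ centred in $F$. For small $r$, choose $R:=R_0$, so that $r<R_0^{1+\theta}$. Each ball then satisfies $N_r(\overline{B}(x,R_0)\cap F)\le C(R_0/r)^\alpha$, and summing over the $M$ balls gives $N_r(F)\le MC R_0^\alpha r^{-\alpha}$. Therefore $\overline{\dim}_M F=\limsup_{r\to0}\log N_r(F)/(-\log r)\le\alpha$. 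Since $\theta$ and $\alpha$ were arbitrary, $\overline{\dim}_M F\le h_F(\theta)$ for all $\theta$, which yields $\overline{\dim}_M F\le\dim_{qA}F$.

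For strictness I will cite standard examples rather than construct new ones.
\begin{enumerate}[(i)]
\item $\dim_H<\dim_P$: a compact set built from a Cantor construction with alternating scales.
\item $\dim_P<\overline{\dim}_M$: the countable set $\{0\}\cup\{1/n\}$, which has packing dimension $0$ and Minkowski dimension $1/2$.
\item $\overline{\dim}_M<\dim_{qA}$: a set $\{n^{-p}\}$-type or a sparse sequence such as $\{2^{-n^2}\}$-type sets. For these the quasi-Assouad dimension detects small windows that the global Minkowski count misses.
\item $\dim_{qA}<\dim_A$: the set $\{0\}\cup\{2^{-n}\}$, which has Assouad dimension $1$, because blow-ups converge to a weak tangent containing a full geometric sequence structure. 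Its quasi-Assouad dimension is $0$, which follows from the counting estimate $N_r\lesssim\log(R/r)$ in the restricted range.
\end{enumerate}
These are all computed in \cite{Fr21}.

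The only step needing care is (iv): I must check that the constraint $r<R^{1+\theta}$ really suppresses the logarithmic growth up to a factor $(R/r)^\varepsilon$. The constraint gives $\log(R/r)\ge\theta\log(1/R)$, so $\log(R/r)\lesssim_\varepsilon(R/r)^\varepsilon$ uniformly, and this settles it.
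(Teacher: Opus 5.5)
Your covering arguments for $\overline{\dim}_M F\le\dim_{qA}F\le\dim_A F$ are correct, and examples (i) and (ii) are fine. The paper states this proposition without proof and defers to \cite{Fr21}, so there is no argument of its own to compare against. The strictness part, however, has a genuine error in (iv). The set $F=\{0\}\cup\{2^{-n}:n\ge0\}$ has $\dim_A F=0$, not $1$, so it does not separate $\dim_{qA}$ from $\dim_A$. To see this, take first $x=0$ and any $0<r<R$. The points $2^{-n}\le r$ are covered by $O(1)$ sets of diameter at most $r$, and at most $\log_2(R/r)+1$ points $2^{-n}\in(r,R]$ remain. For $x=2^{-k}$, either $R<2^{-k-1}$ and $\overline{B}(x,R)\cap F=\{x\}$, or $\overline{B}(x,R)\cap F\subset\overline{B}(0,3R)\cap F$. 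Hence $N_r(\overline{B}(x,R)\cap F)\lesssim 1+\log(R/r)\le C_\varepsilon(R/r)^\varepsilon$ for \emph{all} $0<r<R$. Your own check already shows this: $\log(R/r)\lesssim_\varepsilon(R/r)^\varepsilon$ holds for every $R/r\ge1$, so the constraint $r<R^{1+\theta}$ is never used. Likewise, the weak tangents of a geometric sequence are again countable geometric-type sets, which only give the trivial lower bound $0$.

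The same counting kills the alternative $\{2^{-n^2}\}$ in (iii). There $N_r(\overline{B}(x,R)\cap F)\lesssim 1+\sqrt{\log(R/r)}$ for all $r<R$, so all its dimensions vanish. Only $F_p=\{0\}\cup\{n^{-p}\}$ works in (iii). It has $\overline{\dim}_M F_p=1/(1+p)$. Taking $x=0$ and $r=c_pR^{1+1/p}$ with $c_p$ small, which is admissible when $\theta\le 1/p$, the points of $F_p\cap[R/2,R]$ are $r$-separated and number $\asymp R/r$. So $\dim_{qA}F_p=1$.

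To repair (iv), apply the mechanism you were reaching for to a set of Minkowski dimension zero. If $r<R^{1+\theta}$ then $1/r<(R/r)^{(1+\theta)/\theta}$. Combined with $N_r(\overline{B}(x,R)\cap F)\le N_r(F)$, this gives $h_F(\theta)\le\frac{1+\theta}{\theta}\,\overline{\dim}_M F$. Hence $\overline{\dim}_M F=0$ forces $\dim_{qA}F=0$, and it suffices to find such an $F$ with $\dim_A F=1$. One example is $F=\{0\}\cup\{2^{-n}(1+k/n):n\ge1,\ 0\le k<n\}$. Its $n$-th dyadic block has $n$ points, and the blocks with $2^{-n}<r$ lie in $[0,2r]$. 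So $N_r(F)\lesssim(\log(1/r))^2$ and $\overline{\dim}_M F=0$. On the other hand, take $x=R=2^{-n}$ and $r=2^{-n}/(2n)$. The $n$ points of the $n$-th block lie in $\overline{B}(x,R)$ with mutual distances at least $2r$. This gives $N_r(\overline{B}(x,R)\cap F)\ge n=\tfrac12(R/r)$ with $R/r\to\infty$, so $\dim_A F=1$.
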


We now list some of the basic properties of Assouad dimension and quasi-Assouad dimension.

\begin{proposition}
In the following $\dim$ denotes either Assouad dimension or quasi-Assouad dimension. We have that $\dim$
\begin{enumerate}[\normalfont(i)]
\item is monotone: $\dim(E) \leq \dim(F)$ whenever $E \subset F$;
\item is finitely stable: $\dim(E \cup F) = \max\{\dim(E),\dim(F)\}$ for all $E,F \subset \mR^d$;
\item is stable under closure: $\dim(F)=\dim(\overline{F})$ for all $F \subset \mR^d$;
\item satisfies the open set property: $\dim(U)=d$ for every bounded open set $U \subset \mR^d$;
\item is bi-Lipschitz invariant: if $T:\mR^d \to \mR^d$ is bi-Lipschitz then $\dim(T(F))=\dim(F)$ for all $F \subset \mR^d$.
\end{enumerate}
\end{proposition}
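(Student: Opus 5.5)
We argue directly from Definitions \ref{def:assouad} and \ref{def:quasiassouad}. The basic tool is a comparison lemma for covering numbers. Let $A \subset \mR^d$ be bounded and $L \geq 1$. Then $N_{Lr}(A)$ and $N_r(A)$ differ by at most a factor $c(d,L)$: an $r$-cover can be coarsened, and a set of diameter $Lr$ can be covered by $c(d,L)$ sets of diameter $r$. Also, for $y \in F$ with $|x-y|<R$, we have $\overline{B}(x,R)\subset \overline{B}(y,2R)$. Each property then becomes an inequality between covering numbers of the form
$$N_r\big(\overline{B}(x,R)\cap F'\big) \le c\, N_{r'}\big(\overline{B}(y,R')\cap F\big),$$
with $y \in F$, $R'\asymp R$ and $r'\asymp r$. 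Constants get absorbed into $C$, and factors such as $(2R/r)^\alpha$ are at most $2^\alpha (R/r)^\alpha$.

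\emph{Monotonicity}: for $x\in E\subset F$ we have $\overline{B}(x,R)\cap E\subset \overline{B}(x,R)\cap F$, so every admissible exponent for $F$ is admissible for $E$, for every $\theta$.

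\emph{Finite stability}: $\ge$ follows from monotonicity. For $\le$, take $x\in E\cup F$. Subadditivity gives
$$N_r\big(\overline{B}(x,R)\cap(E\cup F)\big)\le N_r\big(\overline{B}(x,R)\cap E\big)+N_r\big(\overline{B}(x,R)\cap F\big).$$
If the ball meets $E$ at some point $y$, replace the center by $y$ and the radius by $2R$; treat $F$ in the same way.

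\emph{Stability under closure}: given $x\in\overline F$ and $R$, choose $y\in F$ with $|x-y|<R$. Then
$$\overline{B}(x,R)\cap \overline F\subset \overline{\overline{B}(y,2R)\cap F}.$$
Enlarging each set of an $r$-cover slightly gives a $2r$-cover of the closure.

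\emph{Open set property}: the upper bound $d$ comes from a grid count, $N_r(\overline{B}(x,R))\le c_d (R/r)^d$. For the lower bound, fix a ball $B(x_0,R_0)\subset U$. By the packing version (Proposition \ref{proposition:packing} and the subsequent remark), it contains about $(R_0/r)^d$ disjoint $r$-balls. Letting $r\to 0$ with $R_0$ fixed, we have $r<R_0^{1+\theta}$ eventually, so any admissible $\alpha$ must satisfy $\alpha \ge d$.

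\emph{Bi-Lipschitz invariance}: suppose $L^{-1}|z-w|\le |T z-Tw|\le L|z-w|$. Then
$$\overline{B}(Tx,R)\cap T(F)\subset T\big(\overline{B}(x,LR)\cap F\big),$$
and $T$ maps an $(r/L)$-cover to an $r$-cover. Hence
$$N_r\big(\overline{B}(Tx,R)\cap TF\big)\le N_{r/L}\big(\overline{B}(x,LR)\cap F\big)\le C\,(L^2R/r)^\alpha .$$
The reverse inequality follows by applying the same argument to $T^{-1}$.

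For Assouad dimension all of this is immediate, because every pair $0<r<R$ is allowed. The main obstacle is the quasi-Assouad case. There the constraint $r<R^{1+\theta}<R<1$ does not transform exactly when we pass to $r'\asymp r$ and $R'\asymp R$ (for example $R'=LR$ and $r'=r/L$ in the bi-Lipschitz step, or $R'=2R$ in the other steps).

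To handle this, I would show that the new pair satisfies the constraint for a slightly smaller parameter $\theta'<\theta$, at least once $R$ is below a threshold $R_0(\theta,\theta',L)$. Indeed, $r<R^{1+\theta}$ implies
$$r/L< (LR)^{1+\theta'} \quad\text{whenever } R^{\theta-\theta'}\le L^{2+\theta'}.$$
The constraint $R'<1$ likewise holds for small $R$. For $R\ge R_0$, the ratio $R/r$ is bounded below and boundedness of $F$ controls $N_r$ at scale $R_0$. This gives $h_{F'}(\theta)\le h_F(\theta')$. Letting $\theta$ and then $\theta'$ tend to $0$, using that $h_F$ is monotone in $\theta$, yields the claimed inequality for $\dim_{qA}$.
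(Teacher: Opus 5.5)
The paper gives no proof of this proposition. It is recorded among the preliminaries as standard background, with the reader referred to Fraser's book \cite{Fr21}. Your direct verification from Definitions \ref{def:assouad} and \ref{def:quasiassouad} is correct. The comparison is therefore just this: the paper saves space by citing the literature, while your argument is self-contained and isolates the one place where the quasi-Assouad case needs care.

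You misidentify that place slightly, though harmlessly. Every substitution you make, namely $(r,R)\mapsto(r,2R)$, $(r/2,2R)$ or $(r/L,LR)$, enlarges $R$ and shrinks $r$. Hence $r'<(R')^{1+\theta}$ follows at once from $r<R^{1+\theta}$. Your condition $R^{\theta-\theta'}\le L^{2+\theta'}$ in fact holds for every $R<1$ and $L\ge 1$, even with $\theta'=\theta$.

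The only constraint that can fail is the normalization $R'<1$, and your large-$R$ argument already handles it:
\begin{enumerate}[(a)]
\item cover the relevant ball intersected with $F$ by boundedly many sets $\overline{B}(y_i,R_0)\cap F$ with $y_i\in F$;
\item when $r\ge R_0^{1+\theta}$, use the trivial bound $N_r(F)\le N_{R_0^{1+\theta}}(F)$ together with $R/r>1$.
\end{enumerate}
This gives the desired inequality between the functions $h$ at the same parameter. For example, you get $h_{TF}(\theta)\le h_F(\theta)$, $h_{\overline F}(\theta)\le h_F(\theta)$ and $h_{E\cup F}(\theta)\le\max\{h_E(\theta),h_F(\theta)\}$. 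So the passage to $\theta'<\theta$, and the appeal to monotonicity of $h_F$ in $\theta$, are unnecessary.

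Two cosmetic points. First, the definitions only apply to non-empty bounded sets, which you assume tacitly throughout. Second, the sets $T(U_i)$ in the bi-Lipschitz step are open by invariance of domain; alternatively, enlarge them slightly, as you do in the closure step.
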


\section{A new characterization of quasi-Assouad dimension}
\label{sec2}

As observed in Subsection \ref{subsection:assouad}, Assouad dimension and quasi-Assouad dimension can be defined using the notion of $r$-packing of a set $E$, which is a collection of finitely many pairwise disjoint closed balls with centers in $E$ and common radius $r$. For the proof of Theorem \ref{theorem:main1}, however, we need to allow the balls to have different radii, i.e., using general packings rather than $r$-packings. This is the content of the following result, which may be of independent interest.

\begin{theorem}
\label{theorem:assouad}
Let $F \subset \mR^d$ be bounded, and suppose $\dim_{qA} F>0$. Then $\dim_{qA} F = \lim_{\theta \to 0} \sup S_\theta$, where $S_\theta$ is the set of all $t>0$ for which the following holds:

\emph{($\star$)} For all $n \in \mN$, there exist $0<R_n<1$, $x_n \in F$ and a packing $\mathcal{B}_n$ of $\overline{B}(x_n,R_n) \cap F$ such that every ball $B \in \mathcal{B}_n$ has radius at most $R_n^{1+\theta}$ and
$$
\sum_{B \in \mathcal{B}_n} \left( \frac{\operatorname{diam}(B)}{2R_n} \right)^t > n.
$$
\end{theorem}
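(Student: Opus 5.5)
Write $s_\theta:=\sup S_\theta$; recall that $h_F(\theta)$ is nondecreasing as $\theta\downarrow 0$. Note also that $S_\theta$ is an interval of the form $(0,s_\theta)$ or $(0,s_\theta]$: each summand $\operatorname{diam}(B)/(2R_n)\le R_n^{\theta}<1$, so decreasing $t$ only increases the sum. Likewise $S_\theta$ grows as $\theta$ decreases, so $\lim_{\theta\to0}s_\theta$ exists. I will prove two inequalities separately:
\begin{enumerate}[(a)]
\item $s_\theta\ge h_F(\theta')$ for suitable $\theta'$ comparable to $\theta$ (this gives $\lim s_\theta\ge\dim_{qA}F$);
\item $s_\theta\le h_F(\theta/2)$ up to an arbitrarily small error (this gives $\lim s_\theta\le\dim_{qA}F$).
\end{enumerate}

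\emph{Lower bound.} Fix $t<h_F(\theta)$; we may take $t>0$ since $\dim_{qA}F>0$. By the packing version of Definition \ref{def:quasiassouad}, for every $C=n$ there are $0<r<R^{1+\theta}<R<1$ and $x\in F$ with an $r$-packing of $\overline{B}(x,R)\cap F$ having more than $n(R/r)^t$ balls. Each such ball contributes $(2r/2R)^t=(r/R)^t$ to the sum, so the total exceeds $n$. The radii are $r\le R^{1+\theta}$. Hence $t\in S_\theta$, so $s_\theta\ge h_F(\theta)$, and letting $\theta\to0$ gives $\lim s_\theta\ge \dim_{qA}F$.

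\emph{Upper bound.} This is the main step. Fix $\alpha>h_F(\theta/2)$ and $t>\alpha$; I will show $t\notin S_\theta$. Let $\mathcal B$ be a packing of $\overline B(x,R)\cap F$ with all radii at most $R^{1+\theta}$.
\begin{enumerate}[(1)]
\item \emph{Dyadic decomposition.} Group the balls by radius: let $\mathcal B_j$ consist of the balls with radius in $(2^{-j-1}R,2^{-j}R]$, for $j\ge j_0$ where $2^{-j_0}\approx R^{\theta}$.
\item \emph{Counting each class.} The balls in $\mathcal B_j$ are disjoint, have radii $\ge 2^{-j-1}R$, and are centered in $F\cap\overline B(x,R)$. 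Their centers are therefore $2^{-j-1}R$-separated, so their number is at most a constant (depending on $d$) times $N_{2^{-j-2}R}(\overline B(x,R)\cap F)$. Since $2^{-j-2}R\le R^{1+\theta}/4<R^{1+\theta/2}$ for small $R$, the $h_F(\theta/2)$ bound gives $\#\mathcal B_j\le C'2^{j\alpha}$.
\item \emph{Summing.} The total sum is at most $\sum_{j\ge j_0}C'2^{j\alpha}2^{-jt}\le C''2^{-j_0(t-\alpha)}\le C''$. This is bounded independently of $n$, so ($\star$) fails and $t\notin S_\theta$.
\end{enumerate}
For $R$ not small, which means $R$ bounded below, the sum is controlled trivially by a covering-number bound. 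Hence $s_\theta\le h_F(\theta/2)$, and letting $\theta\to0$ completes the proof.

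The obstacle I expect is the bookkeeping in step (2): making sure that the scale ratio $r<R^{1+\theta/2}$ really lands in the range permitted by $h_F(\theta/2)$, and handling the constants uniformly. The geometric decay in $j$ from $t>\alpha$ is exactly what absorbs the constants, so the sum stays bounded.
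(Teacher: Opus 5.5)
Your proposal is correct. The lower bound is the same as the paper's, but your upper bound takes a genuinely different and shorter route.

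\textbf{The paper's route.} It argues by contradiction. Assuming some $t\in S_\theta$ exceeds $h_F(\theta)$, it sorts the balls by \emph{absolute} dyadic diameters $d2^{-j}$ with $d=2\operatorname{diam}F$. It then introduces an auxiliary exponent $\beta$ with $\beta-(t-\beta)/\theta>t'$ and pigeonholes to a single dominant scale $J_n$, the maximizer of $a_{n,j}2^{-j\beta}$ beyond a cut-off $K_n$. After checking that $2^{-J_n}/R_n\to 0$, it splits into the cases ``$R_n$ bounded below'' and ``$R_n\to 0$''. Finally it shrinks the balls at that scale to a common radius, producing an $r_n$-packing that violates the $h_F(\theta)$ bound at exponent $t'$.

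\textbf{Your route.} You normalize the dyadic classes \emph{relative to} $R$, which makes the admissibility condition on the scale automatic for every class. The covering inequality then bounds each class by $C\,2^{(j+2)\alpha}$, and the geometric series gives the uniform bound $C4^{\alpha}/(1-2^{\alpha-t})$ on every admissible sum. There is no case split and no auxiliary exponent.

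\textbf{Comparison.} The paper's argument is essentially the same geometric-series estimate run in reverse; its factor $1/(1-2^{\beta-t})$ plays the role of yours. What the paper's version buys is an explicit equal-radius packing witnessing the failure. What yours buys is a clean quantitative bound that is uniform over all packings.

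\textbf{Unnecessary slack.} Your detour through $\theta/2$ and the separate ``$R$ not small'' case are not needed. A ball in class $j$ has radius $\rho$ with $2^{-j-1}R<\rho\le R^{1+\theta}$, so $2^{-j-2}R<R^{1+\theta}/2<R^{1+\theta}$ for \emph{every} $R\in(0,1)$. (Your bound $\le R^{1+\theta}/4$ is slightly off, but this is immaterial.) Hence the $h_F(\theta)$ inequality applies directly, and your argument yields the exact identity $\sup S_\theta=h_F(\theta)$ at each fixed $\theta$, matching the paper. This is the form invoked in the proof of Lemma \ref{lemma2}, although your weaker sandwich $h_F(\theta)\le \sup S_\theta\le h_F(\theta/2)$ would also suffice there once $\theta\to 0$.
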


\begin{remark}
For each $\theta \in (0,1)$, it is easy to see that $t' \in S_\theta$ whenever $t \in S_\theta$ and $0<t'<t$. The proof of Theorem \ref{theorem:assouad} will show that $S_\theta$ is an interval of the form $(0,h_F(\theta))$ or $(0,h_F(\theta)]$.
\end{remark}

\begin{remark}
As the proof will show, the inequality
$$\sum_{B \in \mathcal{B}_n} \left( \frac{\operatorname{diam}(B)}{2R_n} \right)^t > n$$
in ($\star$) can be replaced by
$$\sum_{B \in \mathcal{B}_n} \left( \frac{\operatorname{diam}(B)}{2R_n} \right)^t > a_n$$
where $(a_n)$ is any sequence of positive numbers with $a_n \to \infty$ as $n \to \infty$, or simply by
$$\lim_{n \to \infty} \sum_{B \in \mathcal{B}_n} \left( \frac{\operatorname{diam}(B)}{2R_n} \right)^t = \infty.$$
\end{remark}

\begin{proof}
In view of the definition of quasi-Assouad dimension, it suffices to show that $\sup S_\theta = h_F(\theta)$ for every $\theta \in (0,1)$ such that $h_F(\theta)>0$. Fix such a $\theta$.
\\

We first show that $h_F(\theta) \leq \sup S_\theta$. Let $t < h_F(\theta)$. By definition of $h_F(\theta)$, for each $n \in \mN$ there exists $x_n \in F$ and $0<r_n \leq R_n^{1+\theta}<R_n<1$ such that

$$N_{r_n}(\overline{B}(x_n,R_n) \cap F) > n \left(\frac{R_n}{r_n}\right)^t.$$

Recall that $N_{r_n}(\overline{B}(x_n,R_n) \cap F)$ can be taken to be the maximum number of balls in an $r_n$-packing $\mathcal{B}_n$ of $\overline{B}(x_n,R_n) \cap F$. For this packing, we have
$$\sum_{B \in \mathcal{B}_n} \left( \frac{\operatorname{diam}(B)}{2R_n} \right)^t = N_{r_n}(\overline{B}(x_n,R_n) \cap F) \left(\frac{2r_n}{2R_n}\right)^t > n.$$
This shows that $t \in S_\theta$ for every $t<h_F(\theta)$, hence $h_F(\theta) \leq \sup S_\theta$ as required.
\\

Suppose now for a contradiction that  $h_F(\theta) < \sup S_\theta$, and let $t \in S_\theta$ such that $h_F(\theta)<t$. Fix $t'>0$ such that $h_F(\theta)<t'<t$. Since $t \in S_\theta$, for each $n \in \mN$, there exist $0<R_n<1$, $x_n \in F$ and a packing $\mathcal{B}_n$ of $\overline{B}(x_n,R_n) \cap F$ such that every ball $B \in \mathcal{B}_n$ has radius at most $R_n^{1+\theta}$ and
$$
\sum_{B \in \mathcal{B}_n} \left( \frac{\operatorname{diam}(B)}{2R_n} \right)^t > n.
$$

In particular the above implies that $\mathcal{B}_n$ contains at least two balls, and each ball in $\mathcal{B}_n$ has diameter at most $d$, where $d:=2\operatorname{diam}(F)$. Fix $n \in \mN$ for now. For $j \in \mN \cup \{0\}$, denote by $a_{n,j}$ the number of balls $B \in \mathcal{B}_n$ such that $d2^{-j-1} < \operatorname{diam}(B) \leq d 2^{-j}$. Then
$$
\sum_{j=0}^{\infty}\frac{ a_{n,j}d^t2^{-jt}}{(2R_{n})^t} \geq \sum_{B \in \mathcal{B}_n} \left( \frac{\operatorname{diam}(B)}{2R_n} \right)^t > n
$$
and thus for each $k \in \mN$, either
$$
\sum_{j=0}^{k-1}\frac{ a_{n,j}d^t2^{-jt}}{(2R_{n})^t} > \frac{n}{2}
$$
or
$$
\sum_{j=k}^{\infty}\frac{ a_{n,j}d^t2^{-jt}}{(2R_{n})^t} > \frac{n}{2}.
$$
Clearly the latter holds for $k=0$. Also, since $a_{n,j}>0$ only for finitely many $j$, the integer
$$K_n:= \max \left\{ k \in \mN: \sum_{j=k}^{\infty}\frac{ a_{n,j}d^t2^{-jt}}{(2R_{n})^t} > \frac{n}{2} \right\}$$
is well-defined. It follows from the definition of $K_n$ that

$$
\sum_{j=0}^{K_n}\frac{ a_{n,j}d^t2^{-jt}}{(2R_{n})^t} > \frac{n}{2}.
$$
and
$$
\sum_{j=K_n}^{\infty}\frac{ a_{n,j}d^t2^{-jt}}{(2R_{n})^t} > \frac{n}{2}.
$$

Now, let $\beta \in (t',t)$ such that
$$\beta - \frac{t-\beta}{\theta} > t'.$$

For each $n \in \mN$, define an integer $J_n \geq K_n$ by
$$
a_{n,J_n} 2^{-J_n \beta} = \max \{a_{n,j}2^{-j\beta}: j \geq K_n\}.
$$
For simplicity we let $a_n:=a_{n,J_n}$. We claim that
$$
\lim_{n \to \infty} \frac{2^{-J_n}}{R_n} =0
$$
and in particular $\lim_{n \to \infty} J_n = \infty$, since $R_n<1$ for all $n$. To see this, note that for each $j$, either $a_{n,j}=0$ or $d2^{-j-1}<2R_n$. Therefore we have
\begin{eqnarray*}
\sum_{j=0}^{J_n} a_{n,j} \geq  \sum_{j=0}^{J_n} \frac{ a_{n,j}d^t2^{(-j-1)t}}{(2R_{n})^t} &=& 2^{-t}\sum_{j=0}^{J_n} \frac{ a_{n,j}d^t2^{-jt}}{(2R_{n})^t}\\
&\geq& 2^{-t}\sum_{j=0}^{K_n} \frac{ a_{n,j}d^t2^{-jt}}{(2R_{n})^t}\\
&>& 2^{-t} \frac{n}{2}.
\end{eqnarray*}

This shows that as $n$ tends to $\infty$, the number of balls $B \in \mathcal{B}_n$ with diameter at least $d 2^{-J_n-1}$ approaches infinity. But all the balls in $\mathcal{B}_n$ are pairwise disjoint and contained in a disk of radius $2R_n$, so comparing volumes gives
$$
\lim_{n \to \infty} \frac{2^{-J_n}}{R_n} =0
$$
as required.

Now, we have, using the fact that $a_n 2^{-J_n \beta} \geq a_{n,j} 2^{-j \beta}$ for all $j \geq K_n$,
\begin{eqnarray*}
\sum_{j=0}^{\infty} \frac{ a_n d^t2^{-J_n \beta}}{(2R_{n})^t} 2^{j(\beta-t)} &\geq& \sum_{j=K_n}^{\infty} \frac{ a_n d^t2^{-J_n \beta}}{(2R_{n})^t} 2^{j(\beta-t)}\\
&\geq& \sum_{j=K_n}^{\infty}\frac{ a_{n,j}d^t2^{-jt}}{(2R_{n})^t} > \frac{n}{2}
\end{eqnarray*}

and thus
$$\frac{n}{2} < \frac{ a_n d^t2^{-J_n \beta}}{(2R_{n})^t} \left( \frac{1}{1-2^{\beta-t}} \right),$$
or, equivalently,
$$\frac{a_n}{n} > \frac{(2R_n)^\beta}{d^\beta 2^{-J_n \beta}} \left( \frac{(2R_n)^{t-\beta} (1-2^{\beta-t})}{2d^{t-\beta}} \right).$$

For simplicity denote by $c_n:= \frac{2R_n}{d 2^{-J_n}}$ and $C:= \frac{(1-2^{\beta-t})}{2d^{t-\beta}}$. For all $n$ sufficiently large, we have $c_n>1$. Taking the logarithm on both sides of the above inequality and rearranging gives

\begin{eqnarray*}
\frac{\log(a_n/n)}{\log c_n} &>& \beta + \frac{\log((2R_n)^{t-\beta}C)}{\log c_n}\\
&=& \beta + \frac{(t-\beta) \log(2R_n) + \log C}{\log c_n}\\
&=& \beta + \frac{(t-\beta)\log(2R_n)}{\log(2R_n)-\log(d2^{-J_n})} + \frac{\log C}{\log c_n}\\
&=& \beta + \frac{t-\beta}{1 - \frac{\log(d 2^{-J_n})}{\log(2R_n)}} + \frac{\log C}{\log c_n}
\end{eqnarray*}

We now consider two cases. Suppose first that the sequence $(R_n)$ is bounded from below. Letting $n \to \infty$ in the above and using the fact that $2^{-J_n} \to 0$ and $c_n \to \infty$ gives
$$
\liminf_{n \to \infty} \frac{\log(a_n/n)}{\log c_n} \geq \beta.
$$
Since $\beta>t'$, we get
$$
\liminf_{n \to \infty} \frac{\log(a_n/n)}{\log c_n} > t'.
$$

Suppose, on the other hand, that $(R_n)$ is not bounded from below. Passing to a subsequence if necessary, assume $R_n \to 0$ as $n \to \infty$. Note that by definition of $a_n$ we must have $d2^{-J_n-1} \leq 2R_n^{1+\theta}$ for all $n$. Combining this with the inequality from above
$$
\frac{\log(a_n/n)}{\log c_n} > \beta + \frac{t-\beta}{1 - \frac{\log(d 2^{-J_n})}{\log(2R_n)}} + \frac{\log C}{\log c_n}
$$
gives
\begin{eqnarray*}
\frac{\log(a_n/n)}{\log c_n} &>& \beta + \frac{t-\beta}{1 - \frac{\log(4R_n^{1+\theta})}{\log(2R_n)}} + \frac{\log C}{\log c_n}\\
&=& \beta + \frac{t-\beta}{1 - \frac{\log(4) + (1+\theta) \log(R_n)}{\log(2)+\log(R_n)}} + \frac{\log C}{\log c_n}
\end{eqnarray*}
and letting $n\to \infty$ we obtain
$$
\liminf_{n \to \infty} \frac{\log(a_n/n)}{\log c_n} \geq \beta -\frac{t-\beta}{\theta}.
$$

The right-hand side is larger than $t'$ and we obtain the same inequality as in the first case, i.e.
$$
\liminf_{n \to \infty} \frac{\log(a_n/n)}{\log c_n} > t'.
$$

Now, passing to a subsequence if necessary, there exists $N \in \mN$ such that for all $n \geq N$, we have
$$
\frac{\log(a_n/n)}{\log c_n} > t'.
$$

Rearranging and using the definition of $c_n$ gives
$$
a_n \left( \frac{d 2^{-J_n}}{2R_n} \right)^{t'} > n \qquad (n \geq N).
$$

Now, for each ball $B \in \mathcal{B}_n$ such that $d 2^{-J_n-1} < \operatorname{diam}(B) \leq d 2^{-J_n}$, let $B' \subset B$ be the ball of diameter $d 2^{-J_n-1}$ with same center as $B$. Let $\mathcal{B}_n'$ denote the collection of all such balls $B'$. We have
\begin{eqnarray*}
\sum_{B' \in \mathcal{B}_n'} \left( \frac{\operatorname{diam}(B')}{2R_n} \right)^{t'} &=& a_n \left(\frac{d 2^{-J_n-1}}{2R_n}\right)^{t'}\\
&=& a_n \left(\frac{d 2^{-J_n}}{2R_n}\right)^{t'} 2^{-{t'}}\\
&>& n 2^{-{t'}}
\end{eqnarray*}
and this holds for all $n \geq N$. Now, for each $n \in \mN$, choose an integer $m_n \geq N$ large enough so that $m_n 2^{-t'} > n$. For simplicity relabel $\mathcal{B}_{m_n}'$ by $\mathcal{B}_n'$, $R_{m_n}$ by $R_n$ and $x_{m_n}$ by $x_n$. We obtain, for each $n \in \mN$, a packing $\mathcal{B}_n'$ of $\overline{B}(x_n,R_n) \cap F$ such that
$$\sum_{B' \in \mathcal{B}_n'} \left( \frac{\operatorname{diam}(B')}{2R_n} \right)^{t'} > n,$$
but the difference now is that all the balls in $\mathcal{B}_n'$ have equal radius $r_n:=d 2^{-J_n-2} \leq R_n^{1+\theta}$. This is easily seen to contradict the fact that $t'>h_F(\theta)$. Indeed, we have, for each $n \in \mN$,
$$N_{r_n}(\overline{B}(x_n,R_n) \cap F) \left(\frac{2r_n}{2R_n}\right)^{t'} \geq \sum_{B' \in \mathcal{B}_n'} \left(\frac{\operatorname{diam}(B')}{2R_n}\right)^{t'}>n,$$
hence
$$N_{r_n}(\overline{B}(x_n,R_n) \cap F) > n \left(\frac{R_n}{r_n}\right)^{t'} \qquad (n \in \mN),$$
so that $h_F(\theta) \geq {t'}$, a contradiction.

This shows that $h_F(\theta)=\sup S_\theta$, as required.

\end{proof}

\section{Proof of Theorem \ref{theorem:main1}}
\label{sec3}

In this section we prove our first main theorem, Theorem \ref{theorem:main1}. We first need a lemma.

\begin{lemma}
\label{lemma1}
Let $f:\mD \times \hC \to \hC$ be a holomorphic motion and let $E \subset \mC$ be bounded with $\operatorname{diam}(E)>0$. Suppose that $h:\mD \to \mC$ is a non-vanishing holomorphic function such that
$$|h(\lambda)| \geq \operatorname{diam}(E_\lambda) \qquad (\lambda \in \mD).$$
Then the function
$$\lambda \mapsto \log \left(\frac{|h(\lambda)|}{\operatorname{diam}(E_\lambda)}\right) \qquad (\lambda \in \mD)$$
is inf-harmonic.
\end{lemma}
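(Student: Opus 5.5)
Write $\operatorname{diam}(E_\lambda)=\sup_{z,w\in E}|f_\lambda(z)-f_\lambda(w)|$. The plan is to exhibit the function in question as an infimum of positive harmonic functions of the form $\log|h(\lambda)/(f_\lambda(z)-f_\lambda(w))|$. Indeed,
$$\log\left(\frac{|h(\lambda)|}{\operatorname{diam}(E_\lambda)}\right)=\inf_{z,w\in E,\,z\neq w}\log\left|\frac{h(\lambda)}{f_\lambda(z)-f_\lambda(w)}\right| .$$
So it suffices to check that each function $u_{z,w}(\lambda):=\log|h(\lambda)/(f_\lambda(z)-f_\lambda(w))|$ is a positive harmonic function on $\mD$.

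\textbf{Step 1: harmonicity.} For fixed distinct $z,w\in E$, the function $g(\lambda):=f_\lambda(z)-f_\lambda(w)$ is holomorphic on $\mD$, by property (i) of holomorphic motions. Since $E\subset\mC$ is bounded and each $f_\lambda$ fixes $\infty$ and is injective, $f_\lambda(z),f_\lambda(w)$ are finite. Injectivity also gives $g(\lambda)\neq0$ for all $\lambda$. As $h$ is non-vanishing too, $h/g$ is a zero-free holomorphic function on the simply connected domain $\mD$. Hence $\log|h/g|$ is harmonic.

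\textbf{Step 2: positivity.} Since $|g(\lambda)|\le\operatorname{diam}(E_\lambda)\le|h(\lambda)|$, we get $u_{z,w}\ge 0$. The definition of inf-harmonic requires ``positive'' harmonic functions, and in the paper's usage (compare Definition \ref{def:infh}, where $u$ takes values in $[0,\infty)$) this means nonnegative. If strict positivity is intended, note that by the minimum principle a nonnegative harmonic function is either strictly positive or identically $0$. In the latter case $u\equiv 0$, and we can replace each $u_{z,w}$ by $u_{z,w}+\epsilon$ for $\epsilon>0$ and take the infimum over $\epsilon$ as well; the infimum is unchanged.

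\textbf{Step 3: the identity.} The displayed equality follows from monotonicity of $\log$ and the definition of the diameter as a supremum. Here we use $\operatorname{diam}(E)>0$, which guarantees that distinct pairs $z\neq w$ exist, and injectivity, which gives $\operatorname{diam}(E_\lambda)>0$ so the logarithm is finite.

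I expect no real obstacle. The only delicate point is ensuring that $f_\lambda(z)$ stays in $\mC$ and that $g$ never vanishes, both of which follow from the normalization and injectivity.
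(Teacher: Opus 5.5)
Your proposal is correct and follows the paper's proof: the paper likewise writes $\log\bigl(|h(\lambda)|/\operatorname{diam}(E_\lambda)\bigr)$ as the infimum of $\log\bigl|h(\lambda)/(f_\lambda(z)-f_\lambda(w))\bigr|$ over distinct $z,w\in E$. Your Steps 1--3 only spell out the checks the paper leaves implicit: that each of these functions is harmonic and nonnegative, and that the identity holds.
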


Recall that $E_\lambda=f_\lambda(E)$ where $f_\lambda(z)=f(\lambda,z)$.

\begin{proof}
First recall that each $f_\lambda$ is assumed to fix $0,1,\infty$, so in particular $E_\lambda$ is a bounded set and $\operatorname{diam}(E_\lambda)$ is well-defined. The result follows directly by noting that for each $\lambda \in \mD$ we have
$$
\log \left(\frac{|h(\lambda)|}{\operatorname{diam}(E_\lambda)}\right) = \inf \left\{ \log \left| \frac{h(\lambda)}{f_\lambda(z)-f_\lambda(w)}\right|: z,w \in E, z \neq w \right\}.
$$
\end{proof}

Our next lemma contains the core of the proof of Theorem \ref{theorem:main1}. The proof follows the ideas of \cite[Lemma 5.2]{FRY23}.

\begin{lemma}
\label{lemma2}
Let $f: \mD \times \hC \to \hC$ be a holomorphic motion and let $E \subset \mC$ be bounded. Let $\lambda_0 \in \mD$ such that $\dim_{qA} (E_{\lambda_0})>0$. Then there exists an inf-harmonic function $u$ on $\mD$ such that
$$u(\lambda_0)=\frac{1}{\dim_{qA} (E_{\lambda_0})} \qquad \mbox{and} \qquad u(\lambda) \geq \frac{1}{\dim_{qA} (E_{\lambda})} \qquad (\lambda \in \mD).$$
\end{lemma}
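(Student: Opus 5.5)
The plan is to follow \cite[Lemma 5.2]{FRY23}, using Theorem \ref{theorem:assouad} in place of the packing characterization there.

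\textbf{Reduction.} Reparametrizing by a disk automorphism sending $0$ to $\lambda_0$ and post-composing with M\"obius maps to restore the normalization, we may assume $\lambda_0=0$. Inf-harmonicity and quasi-Assouad dimension are both preserved by this: the M\"obius maps are bi-Lipschitz on the relevant bounded sets. Put $d_0:=\dim_{qA}(E)$.

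\textbf{Approximating functions.} Fix $\theta\in(0,1)$ small and $t<h_E(\theta)$; by Theorem \ref{theorem:assouad} we have $t\in S_\theta$. For each $n$ we get $x_n$, $R_n$ and a packing $\mathcal{B}_n=\{B_j\}$ of $\overline{B}(x_n,R_n)\cap E$ with balls of radius at most $R_n^{1+\theta}$ and $\sum_j(\operatorname{diam}B_j/2R_n)^t>n$.

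Move these sets by the motion. Let $D_j:=E\cap B_j$ (enlarged slightly if needed) and $D:=\overline{B}(x_n,R_n)\cap E$. Set
$$a_j(\lambda):=\frac{\operatorname{diam}(f_\lambda(D_j))}{c\,\operatorname{diam}(f_\lambda(D))}.$$
To see that $\log(1/a_j)$ is inf-harmonic, use Lemma \ref{lemma1} with $h$ a holomorphic function of the form $f_\lambda(z)-f_\lambda(w)$ for fixed points $z,w\in D$ at comparable distance. Corollary \ref{C:quasisym} and Theorem \ref{T:quasisym}, applied on a smaller disk $|\lambda|<\rho$, give $|h(\lambda)|\asymp\operatorname{diam}f_\lambda(D)$, with a constant that can be absorbed into $c$.

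Theorem \ref{C:implicit} then says that the solution $s_n(\lambda)$ of $\sum_j a_j(\lambda)^{s_n(\lambda)}=c'$ satisfies: $1/s_n$ is inf-harmonic on $\{|\lambda|<\rho\}$.

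\textbf{Comparison with dimension.} Two inequalities are needed.
\begin{itemize}
\item At $\lambda=0$, since $\sum_j a_j(0)^t\gtrsim n$, we get $s_n(0)\ge t-o(1)$.
\item For each $\lambda$, the images $f_\lambda(B_j)$ contain disjoint disks by Corollary \ref{C:quasisym}, centred in $E_\lambda$, of radius $\asymp\operatorname{diam}f_\lambda(D_j)$, inside a ball around $f_\lambda(x_n)$ of radius $\asymp\operatorname{diam}f_\lambda(D)$. By Theorem \ref{T:holder}, radius at most $R^{1+\theta}$ becomes radius at most $R'^{\,1+\theta'}$ with $\theta'\approx\theta/K^2$, where $K$ depends on $\rho$. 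So this is an admissible packing for $E_\lambda$ with parameter $\theta'$.
\end{itemize}

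\textbf{Passing to the limit.} By Proposition \ref{P:infharmonic}(iii), a subsequence $1/s_n$ converges locally uniformly to an inf-harmonic $u_{\theta,t,\rho}$; the alternative $u_n\to\infty$ is excluded because $u_n(0)$ stays bounded. In the limit, $u_{\theta,t,\rho}(0)\le 1/t$.

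Wherever $u_{\theta,t,\rho}(\lambda)<1/s$, the sums $\sum_j a_j(\lambda)^{s}$ become unbounded along the sequence. Theorem \ref{theorem:assouad} then gives $s\le\sup S_{\theta'}(E_\lambda)=h_{E_\lambda}(\theta')$, so $u_{\theta,t,\rho}\ge 1/h_{E_\lambda}(\theta')\ge 1/\dim_{qA}(E_\lambda)$.

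Now let $t\to h_E(\theta)$, then $\theta\to0$, then $\rho\to1$, again extracting limits via Proposition \ref{P:infharmonic}(iii). This produces an inf-harmonic $u$ on $\mD$ with $u(0)\le 1/d_0$ and $u\ge 1/\dim_{qA}(E_\lambda)$ everywhere. Since also $u(0)\ge 1/d_0$, equality holds at $0$.

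\textbf{Main obstacle.} The hardest step is the parameter bookkeeping in the comparison: keeping the packing condition with exponent $1+\theta'$ uniformly in $\lambda$, and handling the normalizing function $h$ so that $\log(1/a_j)$ is genuinely inf-harmonic. This is exactly why the general-radius characterization of Theorem \ref{theorem:assouad} is needed: equal radii are not preserved by $f_\lambda$.
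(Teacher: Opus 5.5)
Your overall architecture is the paper's: packings from Theorem~\ref{theorem:assouad}, then Lemma~\ref{lemma1} with Theorem~\ref{C:implicit}, a limit via Proposition~\ref{P:infharmonic}(iii), a comparison via quasisymmetry, H\"older continuity and Corollary~\ref{C:quasisym}, and finally $\theta\to0$, $\rho\to1$. The reduction to $\lambda_0=0$ via $f(\phi(\lambda),f_{\lambda_0}^{-1}(z))$ is harmless and needs no extra M\"obius maps. However, two steps fail as written.

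\textbf{The constant in the implicit equation.} You never specify the constant $c'$ in $\sum_j a_j(\lambda)^{s_n(\lambda)}=c'$, and the argument breaks if it is independent of $n$. From $u(\lambda)<1/s$ you get $s_n(\lambda)>s$ for large $n$, hence only $\sum_j a_j(\lambda)^s\ge c'$. That does not make the sums unbounded, so Theorem~\ref{theorem:assouad} cannot be invoked for $E_\lambda$. You need $c'=c'_n\to\infty$, chosen so that $s_n(0)$ is still controlled. The clean choice is $c'_n:=\sum_j a_j(0)^t$, which gives $s_n(0)=t$ exactly and $c'_n\gtrsim n$. This is what the paper does, with right-hand side $\sum_B(\operatorname{diam}B/4cR_n)^{d_n}$.

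\textbf{The sets and normalization are at the wrong scale.}
\begin{enumerate}[(i)]
\item $D_j=E\cap B_j$ may be just the centre of $B_j$. Then $a_j\equiv0$, Lemma~\ref{lemma1} does not apply, and $\sum_j a_j(0)^t\gtrsim n$ fails.
\item $D=\overline B(x_n,R_n)\cap E$ can have diameter much smaller than $R_n$. The only available bound, radius $\le R_n^{1+\theta}$, then gives no gain relative to $R'\asymp\operatorname{diam}f_\lambda(D)$. So the constraint ``radius $\le R'^{1+\theta'}$'' in $(\star)$ for $E_\lambda$ is not justified.
\item Putting $\operatorname{diam}f_\lambda(D)$ in the denominator of $a_j$ does not make $\log(1/a_j)$ inf-harmonic. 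Comparability with $|h(\lambda)|$ up to a multiplicative constant cannot be ``absorbed into $c$'', because inf-harmonicity is not preserved under adding a bounded non-constant function.
\end{enumerate}

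\textbf{The fix (the paper's).} Use the fact that $f$ moves all of $\hC$.
\begin{itemize}
\item Work with the full disks $B_j$, not their traces on $E$.
\item Normalize by the holomorphic quantity $2c|f_\lambda(y_n)-f_\lambda(x_n)|$, where $y_n$ is any point of $\partial\mD(x_n,2R_n)$, not necessarily in $E$.
\item Take $c$ to be the quasisymmetry constant that guarantees $\operatorname{diam}f_\lambda(B_j)\le 2c|f_\lambda(y_n)-f_\lambda(x_n)|$ for $|\lambda|<\rho$; this is exactly the hypothesis of Lemma~\ref{lemma1}.
\end{itemize}
With this choice $a_j(0)=\operatorname{diam}B_j/(4cR_n)$ exactly, and the normalization sits at scale $R_n$. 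You then get $\operatorname{diam}f_\lambda(B_j)\le C\,(2c|f_\lambda(y_n)-f_\lambda(x_n)|)^{1+\theta'}$ by combining:
\begin{itemize}
\item quasisymmetry (Theorem~\ref{T:quasisym}) at the centre $z_0$ of $B_j$, compared with a point $y$ at distance $R_n$ from $z_0$;
\item H\"older continuity of $f_\lambda^{-1}$ (Theorem~\ref{T:holder}) applied to $x_n,y_n$.
\end{itemize}
This is the $(1+\theta')$-bookkeeping you flagged as the main obstacle. Finally, rescale so that $R'_n<1$ before applying Theorem~\ref{theorem:assouad}.
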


\begin{proof}
Fix $p \in (0,1)$ with $p>|\lambda_0|$. We carry out the proof on the disk $\mD(0,p)$ and then let $p \to 1$ at the end.

Fix $\theta \in (0,1)$ such that $h_{E_{\lambda_0}}(\theta)>0$. Let $(d_n)$ be a sequence with $0<d_n<h_{E_{\lambda_0}}(\theta)$ and $d_n \to h_{E_{\lambda_0}}(\theta)$ as $n \to \infty$. Fix $n \in \mN$ for now. By Theorem \ref{theorem:assouad} and the remark after it, there exist $0<R_n<1$, $x_n \in E_{\lambda_0}$ and a collection $\mathcal{B}_n$ of mutually disjoint closed disks with centers in $\overline{\mD}(x_n,R_n) \cap E_{\lambda_0}$ and radii at most $R_n^{1+\theta}$ such that

\begin{equation}
\label{eq11}
\sum_{B \in \mathcal{B}_n} \left( \frac{\operatorname{diam}(B)}{2R_n} \right)^{d_n} > n.
\end{equation}

Now, recall from the Extended $\lambda$-lemma (Theorem \ref{T:motions}) that for all $\lambda \in \mD(0,p)$, the map $f_\lambda: \hC \to \hC$ is $k$-quasiconformal for some uniform $k<1$ depending only on $p$. In particular, by Theorem \ref{T:quasisym} there exists a constant $c>0$ depending only on $p$ such that for all $\lambda \in \mD(0,p)$ and any disk $\mD(z_0,r) \subset \mC$, we have
$$
|f_\lambda \circ f_{\lambda_0}^{-1}(z_0)-f_\lambda \circ f_{\lambda_0}^{-1}(z_1)| \leq c|f_\lambda \circ f_{\lambda_0}^{-1}(z_0)-f_\lambda \circ f_{\lambda_0}^{-1}(z_2)| \qquad (z_1,z_2 \in \partial \mD(z_0,r)).
$$

We may assume that $c>1/2$, which will be useful later. Now, fix a point $y_n \in \partial \mD(x_n,2R_n)$. We have that for all $\lambda \in \mD(0,p)$ and all $z \in \mD(x_n,2R_n)$
\begin{equation}
\label{eq22}
\begin{aligned}
|f_\lambda \circ f_{\lambda_0}^{-1}(z)-f_\lambda \circ f_{\lambda_0}^{-1}(x_n)|
&\leq \max_{y \in \partial \mD(x_n,2R_n)}
|f_\lambda \circ f_{\lambda_0}^{-1}(y)-f_\lambda \circ f_{\lambda_0}^{-1}(x_n)| \\
&\leq c|f_\lambda \circ f_{\lambda_0}^{-1}(y_n)-f_\lambda \circ f_{\lambda_0}^{-1}(x_n)|.
\end{aligned}
\end{equation}

Let $B \in \mathcal{B}_n$. Then $B \subset \mD(x_n,2R_n)$ and we deduce from the above that
$$
\operatorname{diam}(f_\lambda \circ f_{\lambda_0}^{-1}(B)) \leq 2c|f_\lambda \circ f_{\lambda_0}^{-1}(y_n)-f_\lambda \circ f_{\lambda_0}^{-1}(x_n)|
$$
for all $\lambda \in \mD(0,p)$. By Lemma \ref{lemma1}, the function
$$\lambda \mapsto \log \left( \frac{2c|f_\lambda \circ f_{\lambda_0}^{-1}(y_n)-f_\lambda \circ f_{\lambda_0}^{-1}(x_n)|}{\operatorname{diam}(f_\lambda \circ f_{\lambda_0}^{-1}(B))}\right) \qquad (\lambda \in \mD(0,p))$$
is inf-harmonic. Now, for each $\lambda \in \mD(0,p)$, define $s_n(\lambda)$ to be the unique solution to the equation
$$
\sum_{B \in \mathcal{B}_n } \left( \frac{\operatorname{diam}(f_\lambda \circ f_{\lambda_0}^{-1}(B))}{2c|f_\lambda \circ f_{\lambda_0}^{-1}(y_n)-f_\lambda \circ f_{\lambda_0}^{-1}(x_n)|}\right)^{s_n(\lambda)} = \sum_{B \in \mathcal{B}_n} \left( \frac{\operatorname{diam}(B)}{4cR_n}\right)^{d_n}.
$$

Then $s_n(\lambda_0)=d_n$ and $1/s_n$ is inf-harmonic on $\mD(0,p)$, by Theorem \ref{C:implicit}. Passing to a subsequence if necessary, we may assume in view of (iii) in Proposition \ref{P:infharmonic} that $(1/s_n)$  converges locally uniformly to an inf-harmonic function $u$ on $\mD(0,p)$. Note that
$$u(\lambda_0) = \lim_{n \to \infty} \frac{1}{s_n(\lambda_0)} = \lim_{n\to \infty} \frac{1}{d_n} = \frac{1}{h_{E_{\lambda_0}}(\theta)}.$$
In particular $u(\lambda)>0$ for all $\lambda \in \mD(0,p)$, by (i) in Proposition \ref{P:infharmonic}.

We now prove that there exists a constant $K \geq 1$, depending only on $p$, such that
$$
u(\lambda) \geq \frac{1}{h_{E_{\lambda}}(\theta')} \qquad (\lambda \in \mD(0,p))
$$
where $\theta':=\theta/K^4$.

For this, fix $\lambda \in \mD(0,p)$, and let $b \in (0,1/u(\lambda))$. Then $s_n(\lambda)>b$ for all large enough $n$, and so, for such $n$, we have that
\begin{eqnarray*}
\sum_{B \in \mathcal{B}_n} \left( \frac{\operatorname{diam}(f_\lambda \circ f_{\lambda_0}^{-1}(B))}{2c|f_\lambda \circ f_{\lambda_0}^{-1}(y_n)-f_\lambda \circ f_{\lambda_0}^{-1}(x_n)|}\right)^{b} &\geq& \sum_{B \in \mathcal{B}_n } \left( \frac{\operatorname{diam}(f_\lambda \circ f_{\lambda_0}^{-1}(B))}{2c|f_\lambda \circ f_{\lambda_0}^{-1}(y_n)-f_\lambda \circ f_{\lambda_0}^{-1}(x_n)|}\right)^{s_n(\lambda)}\\
&=& \sum_{B \in \mathcal{B}_n} \left( \frac{\operatorname{diam}(B)}{4cR_n}\right)^{d_n}.
\end{eqnarray*}

We now claim that there exists a constant $C$ depending only on $E, p, \theta$ such that for all $n \in \mN$ and all $B \in \mathcal{B}_n$ we have
\begin{equation}
\label{eqd}
\operatorname{diam}(f_\lambda \circ f_{\lambda_0}^{-1}(B)) \leq C(2c|f_\lambda \circ f_{\lambda_0}^{-1}(y_n)-f_\lambda \circ f_{\lambda_0}^{-1}(x_n)|)^{1+\theta'}.
\end{equation}
To see this, let $B \in \mathcal{B}_n$ for some $n \in \mN$. Denote by $z_0$ the center of $B$, so that $z_0 \in \overline{\mD}(x_n,R_n)$ and the radius of $B$ is at most $R_n^{1+\theta}<R_n<1$. Let $y \in \partial \mD(x_n,R_n)$ such that $|y-z_0| \geq R_n$. For any $z_1 \in B$, we have, by Theorem \ref{T:quasisym},

\begin{eqnarray*}
|f_\lambda \circ f_{\lambda_0}^{-1}(z_0)-f_\lambda \circ f_{\lambda_0}^{-1}(z_1)| &\leq& \eta \left( \frac{|z_0-z_1|}{|z_0-y|} \right) |f_\lambda \circ f_{\lambda_0}^{-1}(z_0)-f_\lambda \circ f_{\lambda_0}^{-1}(y)|\\
&\leq& \eta \left( \frac{R_n^{1+\theta}}{R_n} \right) |f_\lambda \circ f_{\lambda_0}^{-1}(z_0)-f_\lambda \circ f_{\lambda_0}^{-1}(y)|\\
&\leq& \eta(R_n^\theta) 2c|f_\lambda \circ f_{\lambda_0}^{-1}(y_n)-f_\lambda \circ f_{\lambda_0}^{-1}(x_n)|,
\end{eqnarray*}
where we used the fact that $\eta:[0,\infty) \to [0,\infty)$ is increasing and (\ref{eq22}). Now, recall from Theorem \ref{T:quasisym} that we can take
$$\eta(t):=C\max \{t^{K^2}, t^{1/K^2} \} \qquad (t \in [0,\infty)),$$
where $C=C(p)$ and $K=K(p) \geq 1$ are constants depending only on $p$. Using this and the fact that $R_n^{\theta}<1$, the above gives

\begin{eqnarray*}
|f_\lambda \circ f_{\lambda_0}^{-1}(z_0)-f_\lambda \circ f_{\lambda_0}^{-1}(z_1)| &\leq& C R_n^{\theta/K^2} 2c|f_\lambda \circ f_{\lambda_0}^{-1}(y_n)-f_\lambda \circ f_{\lambda_0}^{-1}(x_n)|\\
&=& C(|y_n-x_n|/2)^{\theta/K^2}2c|f_\lambda \circ f_{\lambda_0}^{-1}(y_n)-f_\lambda \circ f_{\lambda_0}^{-1}(x_n)|\\
&\leq& C |f_\lambda \circ f_{\lambda_0}^{-1}(y_n)-f_\lambda \circ f_{\lambda_0}^{-1}(x_n)|^{\theta/K^4} 2c |f_\lambda \circ f_{\lambda_0}^{-1}(y_n)-f_\lambda \circ f_{\lambda_0}^{-1}(x_n)|\\
&=& C (2c|f_\lambda \circ f_{\lambda_0}^{-1}(y_n)-f_\lambda \circ f_{\lambda_0}^{-1}(x_n)|)^{1+\theta/K^4}.
\end{eqnarray*}
Here we used Theorem \ref{T:holder} applied to the map $h:=f_{\lambda_0} \circ f_{\lambda}^{-1}$ and the points $z=h^{-1}(y_n), w=h^{-1}(x_n)$. For convenience we use the same letter $C$ to denote a constant depending only on $p, E$ and $\theta$. Since the above holds for all $z_1 \in B$, we get
$$\operatorname{diam}(f_\lambda \circ f_{\lambda_0}^{-1}(B)) \leq C(2c |f_\lambda \circ f_{\lambda_0}^{-1}(y_n)-f_\lambda \circ f_{\lambda_0}^{-1}(x_n)|)^{1+\theta/K^4}.$$
The claim (\ref{eqd}) follows by recalling that $\theta/K^4=\theta'$.

Now, Corollary \ref{C:quasisym} implies that for each $B \in \mathcal{B}_n$, the set $f_\lambda \circ f_{\lambda_0}^{-1}(B)$ contains a closed disk centered in $f_\lambda \circ f_{\lambda_0}^{-1}(E_{\lambda_0})=E_\lambda$ of diameter $\delta \operatorname{diam}(f_\lambda \circ f_{\lambda_0}^{-1}(B))$, for some $\delta>0$ depending only on $p$. We may assume $\frac{1}{2}\delta C2^{1+\theta'} \leq 1$. Denote the collection of all such disks by $\mathcal{B}_n'$. It follows from (\ref{eqd}) that for all $n \in \mN$ and all $B' \in \mathcal{B}_n'$ the radius of $B'$ is at most
$$\frac{1}{2} \delta C(2c|f_\lambda \circ f_{\lambda_0}^{-1}(y_n)-f_\lambda \circ f_{\lambda_0}^{-1}(x_n)|)^{1+\theta'} \leq (c|f_\lambda \circ f_{\lambda_0}^{-1}(y_n)-f_\lambda \circ f_{\lambda_0}^{-1}(x_n)|)^{1+\theta'}.$$

Moreover, we have

\begin{eqnarray*}
\sum_{B' \in \mathcal{B}_n'} \left( \frac{\operatorname{diam}(B')}{2c|f_\lambda \circ f_{\lambda_0}^{-1}(y_n)-f_\lambda \circ f_{\lambda_0}^{-1}(x_n)|}\right)^{b} &=& \delta^b \sum_{B \in \mathcal{B}_n} \left( \frac{\operatorname{diam}(f_\lambda \circ f_{\lambda_0}^{-1}(B))}{2c|f_\lambda \circ f_{\lambda_0}^{-1}(y_n)-f_\lambda \circ f_{\lambda_0}^{-1}(x_n)|}\right)^{b}\\
&\geq& \delta^b \sum_{B \in \mathcal{B}_n} \left( \frac{\operatorname{diam}(B)}{4cR_n}\right)^{d_n}\\
&=& \delta^b \sum_{B \in \mathcal{B}_n} \frac{1}{(2c)^{d_n}}\left( \frac{\operatorname{diam}(B)}{2R_n}\right)^{d_n}\\
&\geq& \delta^b \frac{1}{(2c)^{h_{E_{\lambda_0}}(\theta)}} \sum_{B \in \mathcal{B}_n} \left( \frac{\operatorname{diam}(B)}{2R_n}\right)^{d_n}\\
&>& \left(\delta^b\frac{1}{(2c)^{h_{E_{\lambda_0}}(\theta)}}\right) n.
\end{eqnarray*}
where we used the facts that $\delta>0$, $2c>1$, $d_n<h_{E_{\lambda_0}}(\theta)$ for all $n \in \mN$ and Equation (\ref{eq11}). Now, for each $n$, let $m_n \in \mN$ be large enough so that
$$
\left(\delta^b\frac{1}{(2c)^{h_{E_{\lambda_0}}(\theta)}}\right) m_n > n.
$$
For simplicity relabel $\mathcal{B}_{m_n}'$ by $\mathcal{B}_n'$, $R_{m_n}$ by $R_n$,  $x_{m_n}$ by $x_n$ and $y_{m_n}$ by $y_n$. Then we get
$$
\sum_{B' \in \mathcal{B}_n'} \left( \frac{\operatorname{diam}(B')}{2c|f_\lambda \circ f_{\lambda_0}^{-1}(y_n)-f_\lambda \circ f_{\lambda_0}^{-1}(x_n)|}\right)^{b}>n \qquad (n \in \mN).$$
Also, since each disk $B \in \mathcal{B}_n$ is contained in $\mD(x_n,2R_n)$, we get from (\ref{eq22}) that for all $B' \in \mathcal{B}_n'$ we have
$$B' \subset f_\lambda \circ f_{\lambda_0}^{-1}(B) \subset \mD(f_\lambda \circ f_{\lambda_0}^{-1}(x_n), c|f_\lambda \circ f_{\lambda_0}^{-1}(y_n)-f_\lambda \circ f_{\lambda_0}^{-1}(x_n)|).$$
In particular $\mathcal{B}_n'$ is a packing of the set $\mD(x_n', R_n') \cap E_\lambda$, where $x_n':=f_\lambda \circ f_{\lambda_0}^{-1}(x_n) \in E_\lambda$ and
$$
R_n':=c|f_\lambda \circ f_{\lambda_0}^{-1}(y_n)-f_\lambda \circ f_{\lambda_0}^{-1}(x_n)|,
$$
which satisfies
$$
\sum_{B' \in \mathcal{B}_n'} \left( \frac{\operatorname{diam}(B')}{2R_n'} \right) ^ b > n
$$
for all $n \in \mN$. Note also that the radius of each disk in $\mathcal{B}_n'$ is at most $(R_n')^{1+\theta'}$. It follows from Theorem \ref{theorem:assouad} that $h_{E_{\lambda}}(\theta') \geq b$, provided we also have $R_n'<1$ for all $n$. But recall that $x_n, y_n \in \overline{\mD}(x_n,2R_n)$ and that $R_n<1$, from which it is easy to deduce that the sequence $(R_n')$ is bounded, say $R_n'\leq M$. Choose $0<\epsilon<1$ such that
$\epsilon^{1/(1+\theta')}M<1$. Scaling $E_\lambda$ and every disk in $\mathcal{B}_n'$ by $\epsilon$, and setting
\[
\widetilde R_n:=\epsilon^{1/(1+\theta')}R_n',
\]
we obtain packings satisfying the hypotheses of Theorem \ref{theorem:assouad}, with the corresponding sums bounded below by 
$$\epsilon^{b \theta'/(1+\theta')} n \to \infty \qquad (n \to \infty).$$
Hence
$h_{\epsilon E_\lambda}(\theta')\geq b$, and by scale invariance,
$h_{E_\lambda}(\theta')\geq b$. Since this is true for all $b \in (0,1/u(\lambda))$, we must have $h_{E_{\lambda}}(\theta') \geq 1/u(\lambda)$ or, equivalently, $u(\lambda) \geq 1/h_{E_{\lambda}}(\theta')$, as required.

The next step is to let $\theta \to 0$. Let $(\theta_n)$ be a decreasing sequence in $(0,1)$ with $\theta_n \to 0$ as $n \to \infty$. For each $n \in \mN$ large enough, the above construction gives an inf-harmonic function $u_n$ on $\mD(0,p)$ such that $u_n(\lambda_0)=1/h_{E_{\lambda_0}}(\theta_n)$ and $u_n(\lambda) \geq 1/h_{E_{\lambda}}(\theta_n')$ for all $\lambda \in \mD(0,p)$, where $\theta_n':=\theta_n/K^4$. By (iii) in Proposition \ref{P:infharmonic}, a subsequence of $(u_n)$ converges locally uniformly to an inf-harmonic function $u$ on $\mD(0,p)$. Since both $\theta_n$ and $\theta_n'$ tend to $0$ as $n \to \infty$, we obtain
$$u(\lambda_0)=\frac{1}{\dim_{qA} (E_{\lambda_0})} \qquad \mbox{and} \qquad u(\lambda) \geq \frac{1}{\dim_{qA} (E_{\lambda})} \qquad (\lambda \in \mD(0,p))$$
as required.

The proof of the lemma is almost complete, except that $u$ is currently only defined on $\mD(0,p)$ and not on all of $\mD$. To fix this, let $(p_n)$ be an increasing sequence in $(|\lambda_0|,1)$ with $p_n \to 1$ as $n \to \infty$. For each $n$ the above construction gives an inf-harmonic function $u_n$ on $\mD(0,p_n)$ such that $u_n(\lambda_0)=1/\dim_{qA} (E_{\lambda_0})$ and $u_n(\lambda) \geq 1/\dim_{qA} (E_{\lambda})$ for all $\lambda \in \mD(0,p_n)$. Again by (iii) in Proposition \ref{P:infharmonic}, a subsequence of $(u_n)$ converges locally uniformly to an inf-harmonic function $u$ on $\mD$. Clearly
$$u(\lambda_0)=\frac{1}{\dim_{qA} (E_{\lambda_0})} \qquad \mbox{and} \qquad u(\lambda) \geq \frac{1}{\dim_{qA} (E_{\lambda})} \qquad (\lambda \in \mD)$$
as required.

\end{proof}

We can now proceed with the proof of Theorem \ref{theorem:main1}.

\begin{proof}
Let $f: \mD \times \hC \to \hC$ be a holomorphic motion and let $E \subset \mC$ be bounded. If $\dim_{qA} (E_{\lambda_0})>0$ for some $\lambda_0 \in \mD$, then $\dim_{qA} (E_{\lambda})>0$ for all $\lambda \in \mD$, by Lemma \ref{lemma2}. In this case, for each $\lambda_0 \in \mD$ Lemma \ref{lemma2} yields an inf-harmonic function $u_{\lambda_0}$ on $\mD$ such that
$$u_{\lambda_0}(\lambda_0)=\frac{1}{\dim_{qA}(E_{\lambda_0})} \qquad \mbox{and} \qquad u_{\lambda_0}(\lambda) \geq \frac{1}{\dim_{qA}(E_{\lambda})} \qquad (\lambda \in \mD).$$
It easily follows that
$$\frac{1}{\dim_{qA}(E_{\lambda})} = \inf_{\lambda_0 \in \mD} u_{\lambda_0}(\lambda)$$
which is evidently inf-harmonic on $\mD$, as required.
\end{proof}

\section{Proof of Theorem \ref{theorem:main2}}
\label{sec4}

In this section we prove Theorem \ref{theorem:main2}, which states that if $E \subset \mR$ is bounded and if $f: \mD \times \hC \to \hC$ is a holomorphic motion symmetric with respect to the real line, i.e.
$$f(\lambda,z)=\overline{f(\overline{\lambda},\overline{z})} \qquad (\lambda \in \mD, z \in \mC),$$
then either $\operatorname{dim}_{qA}(E_\lambda)=0$ for all $\lambda \in \mD$, or $\lambda \mapsto 1/\operatorname{dim}_{qA}(E_\lambda)$ is an inf-sym-harmonic function on $\mD$.

The proof closely follows that of Theorem \ref{theorem:main1}, with a few modifications using the symmetry of the set and the holomorphic motion. We begin with a symmetric version of Lemma \ref{lemma1}.

\begin{lemma}
\label{lemma1sym}
Let $f:\mD \times \hC \to \hC$ be a symmetric holomorphic motion and let $E \subset \mR$ be bounded with $\operatorname{diam}(E)>0$. Suppose that $h:\mD \to \mC$ is a non-vanishing holomorphic function such that
$$|h(\lambda)| \geq \operatorname{diam}(E_\lambda) \qquad (\lambda \in \mD)$$
and
$$|h(\lambda)|=|h(\overline{\lambda})| \qquad (\lambda \in \mD).$$
Then the function
$$\lambda \mapsto \log \left(\frac{|h(\lambda)|}{\operatorname{diam}(E_\lambda)}\right) \qquad (\lambda \in \mD)$$
is inf-sym-harmonic.
\end{lemma}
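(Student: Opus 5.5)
Follow the proof of Lemma \ref{lemma1}: write the target function as an infimum of positive harmonic functions, then symmetrize each one. For $z,w\in E$ with $z\neq w$, set
$$
h_{z,w}(\lambda):=\log\left|\frac{h(\lambda)}{f_\lambda(z)-f_\lambda(w)}\right| \qquad (\lambda\in\mD).
$$
Since $f_\lambda$ is injective, $\lambda\mapsto f_\lambda(z)-f_\lambda(w)$ is a non-vanishing holomorphic function on $\mD$; it is finite because $f_\lambda$ fixes $\infty$. As $h$ is also non-vanishing, $h_{z,w}$ is harmonic. It is positive because $|f_\lambda(z)-f_\lambda(w)|\le \operatorname{diam}(E_\lambda)\le |h(\lambda)|$. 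Taking the infimum over all pairs gives
$$
\inf_{z\neq w} h_{z,w}(\lambda)=\log\frac{|h(\lambda)|}{\operatorname{diam}(E_\lambda)},
$$
exactly as in Lemma \ref{lemma1}.

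The individual functions $h_{z,w}$ need not be symmetric, so the plan is to show that each is already symmetric by exploiting $E\subset\mR$. For real $z$ the symmetry of the motion gives $f(\overline{\lambda},z)=\overline{f(\lambda,\overline z)}=\overline{f(\lambda,z)}$. Hence for $z,w\in E\subset\mR$,
$$
|f_{\overline\lambda}(z)-f_{\overline\lambda}(w)|=|\overline{f_\lambda(z)-f_\lambda(w)}|=|f_\lambda(z)-f_\lambda(w)|.
$$
Combined with the hypothesis $|h(\overline\lambda)|=|h(\lambda)|$, this yields $h_{z,w}(\overline\lambda)=h_{z,w}(\lambda)$. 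So every $h_{z,w}$ is a positive symmetric harmonic function, and the infimum formula shows that the function in the statement is inf-sym-harmonic.

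There is no real obstacle in this argument. The only point needing care is the observation that real points have conjugation-symmetric orbits under a symmetric motion, which is where $E\subset\mR$ is used. One should also note that $\operatorname{diam}(E_\lambda)>0$ by injectivity, so the logarithm is finite, and that $E_\lambda$ is bounded because $f_\lambda$ is a homeomorphism of $\hC$ fixing $\infty$.
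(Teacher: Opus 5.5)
Your proposal is correct and follows the paper's proof. The paper writes the same infimum of the positive harmonic functions $\log|h(\lambda)/(f_\lambda(z)-f_\lambda(w))|$ over pairs $z\neq w$ in $E$, and asserts that each one is symmetric by ``a simple calculation'' using $E\subset\mR$ and the symmetry of $f$ and $h$; you have written that calculation out via $f(\overline{\lambda},z)=\overline{f(\lambda,z)}$ for real $z$.
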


\begin{proof}
As in the proof of Lemma \ref{lemma1}, we have, for each $\lambda \in \mD$,
$$
\log \left(\frac{|h(\lambda)|}{\operatorname{diam}(E_\lambda)}\right) = \inf \left\{ \log \left| \frac{h(\lambda)}{f_\lambda(z)-f_\lambda(w)}\right|: z,w \in E, z \neq w \right\},
$$
where the right-hand side is an infimum of positive harmonic functions in $\lambda$. Each of these positive harmonic functions is symmetric, as is easily seen by a simple calculation using the fact that $E \subset \mR$ and that $f$ and $h$ are symmetric.
\end{proof}

We will also need a symmetric version of Theorem \ref{C:implicit}.

\begin{theorem}
\label{C:implicitsym}
Let $a_1,\dots,a_n:\mD\to(0,1)$ be functions such that
$\log (1/a_j)$ is inf-sym-harmonic on $\mD$ for each $j$.
Let $c\in(0,n)$ and, for each $\lambda\in \mD$, let $s(\lambda)$
be the unique solution of the equation
$$
\sum_{j=1}^na_j(\lambda)^{s(\lambda)}=c.
$$
Then  $1/s$ is an inf-sym-harmonic function on $\mD$.
\end{theorem}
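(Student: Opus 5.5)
The plan is to follow the proof of Theorem \ref{C:implicit} from \cite{FRY23} while keeping track of symmetry. The only thing that changes is the class of harmonic functions allowed. I will try to reduce the statement to a general principle: if $u$ is inf-harmonic and symmetric, and all the harmonic minorants produced in the construction can be symmetrized, then $u$ is inf-sym-harmonic.

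First I will check that $s(\lambda)$ is well defined. Since each $a_j(\lambda)\in(0,1)$, the map $s\mapsto\sum_j a_j(\lambda)^s$ is continuous and strictly decreasing. It tends to $n$ as $s\to0^+$ and to $0$ as $s\to\infty$. Because $c\in(0,n)$, there is therefore a unique root $s(\lambda)>0$.

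The key step is a variational formula for $1/s(\lambda)$. Write $a_j=e^{-v_j}$ with $v_j=\log(1/a_j)$. I expect
\[
\frac{1}{s(\lambda)}=\inf\Bigl\{\sum_j p_j\,\frac{v_j(\lambda)}{\log(1/c)+H(p)}\Bigr\}
\]
or a variant of it. Here the infimum runs over probability vectors $p$, $H(p)=-\sum_j p_j\log p_j$ is the entropy, and only those $p$ with $\log(1/c)+H(p)>0$ are allowed. The identity should follow from the Gibbs variational principle
\[
\log\sum_j e^{-s v_j}=\max_p\Bigl(H(p)-s\sum_j p_jv_j\Bigr).
\]
Substituting into this formula, each term $\sum_j p_jv_j/(\log(1/c)+H(p))$ is a positive constant combination of the $v_j$. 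Writing $v_j=\inf_{h\in\mathcal H_j}h$, with $\mathcal H_j$ a family of positive symmetric harmonic functions, the combination is an infimum of positive combinations $\sum_j\alpha_jh_j$. Positive combinations of symmetric positive harmonic functions are again symmetric positive harmonic, so $1/s$ is an infimum of such functions, that is, inf-sym-harmonic.

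Since I am trying to reproduce the proof in \cite{FRY23}, which I believe goes essentially this way, the main obstacle is establishing the variational identity correctly. This includes the constraint on $p$ and the fact that the infimum is attained. The optimum should be at the Gibbs weights $p_j=a_j^{s}/c$. At these weights $H(p)=s\sum_j p_jv_j+\log c$, so $\sum_j p_jv_j/(H(p)-\log c)=1/s$, which gives equality. For a general $p$, Gibbs' inequality gives $H(p)-s\sum_j p_jv_j\le\log c$. Dividing by the positive quantity $s\,(H(p)-\log c)$ yields the lower bound $\sum_j p_jv_j/(H(p)-\log c)\ge1/s$. The denominator is $H(p)+\log(1/c)$, and it is positive at the Gibbs weights. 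So I will restrict the infimum to those $p$ where this denominator is positive.

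A fallback route is also available. One could invoke Theorem \ref{C:implicit} itself and symmetrize each harmonic minorant $h$ by replacing it with $\tfrac12(h(\lambda)+h(\bar\lambda))$. However, the infimum of the symmetrized functions need not equal $1/s$ unless equality is attained at both $\lambda$ and $\bar\lambda$ by the same $h$, which is exactly what the explicit formula above guarantees. So I will commit to the variational argument.
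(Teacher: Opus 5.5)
Your proposal is correct and is essentially the paper's route. The paper simply cites \cite[Lemma 4.7]{FRY23}, applied to the inf-cone of inf-sym-harmonic functions on $\mD$. Your identity $\frac{1}{s}=\inf_{p}\frac{\sum_j p_j\log(1/a_j)}{H(p)-\log c}$, taken over probability vectors $p$ with $H(p)>\log c$ and obtained from Gibbs' inequality (equivalently weighted AM--GM), together with the closure of inf-sym-harmonic functions under nonnegative linear combinations and arbitrary infima, is exactly the content of that inf-cone lemma.
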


\begin{proof}
This is used in the proof of \cite[Lemma 10.4]{FRY23} and follows from applying \cite[Lemma 4.7]{FRY23} to the inf-cone of inf-sym-harmonic functions on $\mD$.
\end{proof}

We can now prove a symmetric version of Lemma \ref{lemma2}.

\begin{lemma}
\label{lemma2sym}
Let $f: \mD \times \hC \to \hC$ be a symmetric holomorphic motion and let $E \subset \mR$ be bounded. Let $\lambda_0 \in \mD$ such that $\dim_{qA} (E_{\lambda_0})>0$. Then there exists an inf-sym-harmonic function $u$ on $\mD$ such that
$$u(\lambda_0)=\frac{1}{\dim_{qA} (E_{\lambda_0})} \qquad \mbox{and} \qquad u(\lambda) \geq \frac{1}{\dim_{qA} (E_{\lambda})} \qquad (\lambda \in \mD).$$
\end{lemma}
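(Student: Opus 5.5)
We follow the proof of Lemma \ref{lemma2} essentially verbatim; the only new work is to check that every inf-harmonic function produced there can be replaced by an inf-sym-harmonic one. We fix $p\in(|\lambda_0|,1)$ and work on $\mD$ throughout. Lemma \ref{lemma1sym} and Theorem \ref{C:implicitsym} are stated on $\mD$, and the symmetric family is not naturally defined on $\mD(0,p)$. So we either rescale the motion, using $\lambda\mapsto f(p\lambda,\cdot)$, which is again symmetric, or simply keep $p$ only as the bound controlling the quasisymmetry and H\"older constants at the single point $\lambda$ under consideration. The natural choice is to use the full disk for the harmonic objects and the constants $c,K,\delta,C$ depending on $|\lambda|$ only locally. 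We fix $\theta$ with $h_{E_{\lambda_0}}(\theta)>0$ and take $d_n\uparrow h_{E_{\lambda_0}}(\theta)$. Theorem \ref{theorem:assouad} then gives $R_n$, $x_n\in E_{\lambda_0}$ and packings $\mathcal{B}_n$ satisfying (\ref{eq11}).

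The key modification is the choice of the points and the normalising function. The set $E\subset\mR$ is fixed, but $E_{\lambda_0}$ is not real in general. So we pull everything back to $\lambda=0$. We set $\tilde x_n:=f_{\lambda_0}^{-1}(x_n)\in E\subset\mR$, and we choose $\tilde y_n:=f_{\lambda_0}^{-1}(y_n)$ to be a \emph{real} point. The requirement on $\tilde y_n$ is that the image $y_n=f_{\lambda_0}(\tilde y_n)$ is comparable in distance to $2R_n$ from $x_n$. Such a real point exists because $f_{\lambda_0}(\mR)$ is a quasicircle through $x_n$ that leaves $\mD(x_n,2R_n)$. By quasisymmetry (Theorem \ref{T:quasisym}) this point plays the role of $y_n$ in (\ref{eq22}), with a constant $c$ depending only on $p$. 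We then use
$$h_n(\lambda):=2c\bigl(f_\lambda(\tilde y_n)-f_\lambda(\tilde x_n)\bigr),$$
which is non-vanishing and holomorphic. Because $\tilde x_n,\tilde y_n\in\mR$ and $f$ is symmetric, it satisfies $|h_n(\overline\lambda)|=|h_n(\lambda)|$. Each disk $B\in\mathcal{B}_n$ is replaced by the set $\tilde B:=f_{\lambda_0}^{-1}(B)$. The point to watch is that $\tilde B$ is not a subset of $\mR$. To fix this we apply Lemma \ref{lemma1sym} to $\tilde B\cap\mR$, or more simply to the symmetrised set $\tilde B\cup\overline{\tilde B}$, whose image under $f_\lambda$ has diameter comparable to that of $f_\lambda(\tilde B)$ thanks to symmetry and quasisymmetry. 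Then $\lambda\mapsto\log(|h_n(\lambda)|/\operatorname{diam} f_\lambda(\tilde B\cup\overline{\tilde B}))$ is an infimum of functions $\log|h_n(\lambda)/(f_\lambda(z)-f_\lambda(w))|$ that are symmetric and harmonic. These are positive after enlarging $c$ by a fixed factor, as in (\ref{eq22}).

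Next we define $s_n$ by the same implicit equation as in the proof of Lemma \ref{lemma2}. Theorem \ref{C:implicitsym} makes $1/s_n$ inf-sym-harmonic with $s_n(\lambda_0)=d_n$. The inf-sym-harmonic functions form a class closed under the locally uniform limits of Proposition \ref{P:infharmonic}(iii), since limits of positive symmetric harmonic functions in Harnack families remain symmetric. Granting this, we pass to a limit $u$ with $u(\lambda_0)=1/h_{E_{\lambda_0}}(\theta)$. The lower bound $u(\lambda)\ge 1/h_{E_\lambda}(\theta')$ is proved word for word as before. This uses the estimate (\ref{eqd}) from Theorems \ref{T:quasisym} and \ref{T:holder}, inner disks from Corollary \ref{C:quasisym}, and Theorem \ref{theorem:assouad} together with rescaling. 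The symmetrised sets only change the diameters by bounded factors, which get absorbed into $\delta$. Finally we let $\theta\to0$ and $p\to1$ exactly as before.

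The main obstacle is the step in the third paragraph that relies on this limit closure, which is used in the two limiting arguments. Proposition \ref{P:infharmonic}(iii) only guarantees an inf-harmonic limit. We must verify that the proof in \cite{FRY23} goes through inside the inf-cone of symmetric functions. An inf-sym-harmonic $u$ satisfies $u(\lambda)=u(\overline\lambda)$, and conversely, a symmetric inf-harmonic $u$ is inf-sym-harmonic: if $u=\inf h$, then $u=\inf \tfrac12(h+h\circ\overline{\,\cdot\,})$. Indeed, $\tfrac12(h(\lambda)+h(\overline\lambda))\ge u(\lambda)$, and it is close to $u(\lambda)$ when $h$ nearly attains $u$ at $\lambda$ and also nearly attains it at $\overline\lambda$. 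This last point needs care; if it fails, we would instead invoke \cite[Lemma 4.7]{FRY23} for the symmetric inf-cone, as in Theorem \ref{C:implicitsym}. The symmetry of the limit is then preserved under pointwise convergence.
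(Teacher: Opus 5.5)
There is a genuine gap, and it sits exactly where this lemma differs from Lemma \ref{lemma2}. Your preferred ``more simply'' route applies Lemma \ref{lemma1sym} to the symmetrised set $S=\tilde B\cup\overline{\tilde B}$. That lemma needs the set to lie in $\mR$. For non-real $z,w\in S$, the function $g_{z,w}(\lambda)=\log|h_n(\lambda)/(f_\lambda(z)-f_\lambda(w))|$ is not symmetric: the symmetry of $f$ only gives $g_{z,w}(\overline\lambda)=g_{\overline z,\overline w}(\lambda)$. So the infimum over $S$ is merely a \emph{symmetric inf-harmonic} function.

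Your fallback, that a symmetric inf-harmonic function is inf-sym-harmonic, is false. Write $P_\zeta(\lambda)=(1-|\lambda|^2)/|\zeta-\lambda|^2$. Then $u=\min(P_i,P_{-i})$ is symmetric and inf-harmonic. But $u(0)=1$ and $u(i/2)=1/3<\frac{3}{5}u(0)$, which violates Lemma \ref{L:HarnackSym}. Your averaging argument breaks because no single $h$ need be nearly extremal at both $\lambda$ and $\overline\lambda$. Indeed, if that implication held, the lemma would follow immediately from Theorem \ref{theorem:main1}, since $E_{\overline\lambda}=\overline{E_\lambda}$ already makes $\lambda\mapsto 1/\dim_{qA}(E_\lambda)$ symmetric. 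The whole content of the lemma is to produce genuinely symmetric harmonic majorants.

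The repair is the other option you mention but do not develop, and it is the paper's proof. Choose $y_n\in\partial\mD(x_n,2R_n)\cap f_{\lambda_0}(\mR)$. Work with $\operatorname{diam} f_\lambda(f_{\lambda_0}^{-1}(B)\cap\mR)$, so that only real pairs $z,w$ enter and Lemma \ref{lemma1sym} applies.

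The implicit equation cannot be ``the same'' as in Lemma \ref{lemma2}. Its right-hand side must be $\sum_B(\operatorname{diam}(B\cap f_{\lambda_0}(\mR))/4cR_n)^{d_n}$, so that $s_n(\lambda_0)=d_n$.

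The missing ingredient is the bound $\operatorname{diam}(B\cap f_{\lambda_0}(\mR))\ge\frac12\operatorname{diam}B$. It holds because the centre of $B$ lies in $E_{\lambda_0}\subset f_{\lambda_0}(\mR)$, a connected unbounded set that must leave $B$. This keeps the sum $\gtrsim n$, losing only a factor $2^{-d_n}$. In the lower-bound step, the inner disks from Corollary \ref{C:quasisym} inside $f_\lambda(f_{\lambda_0}^{-1}(B))$ have diameter at least $\delta\operatorname{diam} f_\lambda(f_{\lambda_0}^{-1}(B)\cap\mR)$, so restricting to real points costs nothing there.

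Finally, prove closure of inf-sym-harmonic functions under the limits of Proposition \ref{P:infharmonic}(iii) directly, not via the false equivalence. For each $\lambda$, pick near-extremal symmetric $h_n$. Harnack gives a locally uniformly convergent subsequence, and its limit is symmetric, harmonic, at least $u$, and equal to $u$ at $\lambda$.
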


\begin{proof}

Fix $p \in (0,1)$ with $p>|\lambda_0|$. We carry out the proof on the disk $\mD(0,p)$ and then let $p \to 1$ at the end.

Fix $\theta \in (0,1)$ such that $h_{E_{\lambda_0}}(\theta)>0$. Let $(d_n)$ be a sequence with $0<d_n<h_{E_{\lambda_0}}(\theta)$ and $d_n \to h_{E_{\lambda_0}}(\theta)$ as $n \to \infty$. Fix $n \in \mN$ for now. By Theorem \ref{theorem:assouad} and the remark after it, there exist $0<R_n<1$, $x_n \in E_{\lambda_0}$ and a collection $\mathcal{B}_n$ of mutually disjoint closed disks with centers in $\overline{\mD}(x_n,R_n) \cap E_{\lambda_0}$ and radii at most $R_n^{1+\theta}$ such that

\begin{equation}
\label{eq11sym}
\sum_{B \in \mathcal{B}_n} \left( \frac{\operatorname{diam}(B)}{2R_n} \right)^{d_n} > n.
\end{equation}

As in the proof of Lemma \ref{lemma2}, there exists a constant $c>0$ depending only on $p$ such that for all $\lambda \in \mD(0,p)$ and any disk $\mD(z_0,r) \subset \mC$, we have
$$
|f_\lambda \circ f_{\lambda_0}^{-1}(z_0)-f_\lambda \circ f_{\lambda_0}^{-1}(z_1)| \leq c|f_\lambda \circ f_{\lambda_0}^{-1}(z_0)-f_\lambda \circ f_{\lambda_0}^{-1}(z_2)| \qquad (z_1,z_2 \in \partial \mD(z_0,r)).
$$

We may assume that $c>1/2$. Now, note that the curve $f_{\lambda_0}(\mR)$ contains $x_n$ and is unbounded, so there must exist a point $y_n \in \partial \mD(x_n,2R_n) \cap f_{\lambda_0}(\mR)$. As in the proof of Lemma \ref{lemma2}, we obtain the inequality
$$
\operatorname{diam}(f_\lambda \circ f_{\lambda_0}^{-1}(B)) \leq 2c|f_\lambda \circ f_{\lambda_0}^{-1}(y_n)-f_\lambda \circ f_{\lambda_0}^{-1}(x_n)|
$$
valid for all $B \in \mathcal{B}_n$ and all $\lambda \in \mD(0,p)$. Replacing $B$ by $B \cap f_{\lambda_0}(\mathbb{R})$ can only decrease the diameter, hence
$$
\operatorname{diam}(f_\lambda(f_{\lambda_0}^{-1}(B) \cap \mR)) \leq 2c|f_\lambda \circ f_{\lambda_0}^{-1}(y_n)-f_\lambda \circ f_{\lambda_0}^{-1}(x_n)|.
$$

A simple calculation using the symmetry of $f$ and the fact that $x_n, y_n \in f_{\lambda_0}(\mR)$ shows that the right-hand side is symmetric in $\lambda$. By Lemma \ref{lemma1sym}, the function
$$\lambda \mapsto \log \left( \frac{2c|f_\lambda \circ f_{\lambda_0}^{-1}(y_n)-f_\lambda \circ f_{\lambda_0}^{-1}(x_n)|}{\operatorname{diam}(f_\lambda(f_{\lambda_0}^{-1}(B) \cap \mR))}\right) \qquad (\lambda \in \mD(0,p))$$
is inf-sym-harmonic. Now, for each $\lambda \in \mD(0,p)$, define $s_n(\lambda)$ to be the unique solution to the equation
$$
\sum_{B \in \mathcal{B}_n } \left( \frac{\operatorname{diam}(f_\lambda(f_{\lambda_0}^{-1}(B) \cap \mR))}{2c|f_\lambda \circ f_{\lambda_0}^{-1}(y_n)-f_\lambda \circ f_{\lambda_0}^{-1}(x_n)|}\right)^{s_n(\lambda)} = \sum_{B \in \mathcal{B}_n} \left( \frac{\operatorname{diam}(B \cap f_{\lambda_0}(\mR))}{4cR_n}\right)^{d_n}.
$$

Then $s_n(\lambda_0)=d_n$ and $1/s_n$ is inf-sym-harmonic on $\mD(0,p)$, by Theorem \ref{C:implicitsym}. Passing to a subsequence if necessary, we may assume that $(1/s_n)$ converges locally uniformly to an inf-sym-harmonic function $u$ on $\mD(0,p)$. Note that
$$u(\lambda_0)= \frac{1}{h_{E_{\lambda_0}}(\theta)}$$
and $u(\lambda)>0$ for all $\lambda \in \mD(0,p)$.

Now, following the proof of Lemma \ref{lemma2}, we show that there exists a constant $K \geq 1$, depending only on $p$, such that
$$
u(\lambda) \geq \frac{1}{h_{E_{\lambda}}(\theta')} \qquad (\lambda \in \mD(0,p))
$$
where $\theta':=\theta/K^4$. For this, fix $\lambda \in \mD(0,p)$, and let $b \in (0,1/u(\lambda))$. Then $s_n(\lambda)>b$ for all large enough $n$, and so, for such $n$, we have that
\begin{eqnarray*}
\sum_{B \in \mathcal{B}_n} \left( \frac{\operatorname{diam}(f_\lambda(f_{\lambda_0}^{-1}(B) \cap \mR))}{2c|f_\lambda \circ f_{\lambda_0}^{-1}(y_n)-f_\lambda \circ f_{\lambda_0}^{-1}(x_n)|}\right)^{b} &\geq& \sum_{B \in \mathcal{B}_n } \left( \frac{\operatorname{diam}(f_\lambda(f_{\lambda_0}^{-1}(B) \cap \mR))}{2c|f_\lambda \circ f_{\lambda_0}^{-1}(y_n)-f_\lambda \circ f_{\lambda_0}^{-1}(x_n)|}\right)^{s_n(\lambda)}\\
&=& \sum_{B \in \mathcal{B}_n} \left( \frac{\operatorname{diam}(B \cap f_{\lambda_0}(\mR))}{4cR_n}\right)^{d_n}.
\end{eqnarray*}

Now, just as in the proof of Lemma \ref{lemma2}, one can show that there exists a constant $C$ depending only on $E, p, \theta$ such that for all $n \in \mN$ and all $B \in \mathcal{B}_n$ we have
\begin{equation}
\operatorname{diam}(f_\lambda(f_{\lambda_0}^{-1}(B) \cap \mR)) \leq C(2c|f_\lambda \circ f_{\lambda_0}^{-1}(y_n)-f_\lambda \circ f_{\lambda_0}^{-1}(x_n)|)^{1+\theta'}.
\end{equation}

For each $B \in \mathcal{B}_n$, the set $f_\lambda \circ f_{\lambda_0}^{-1}(B)$ contains a closed disk centered in $f_\lambda \circ f_{\lambda_0}^{-1}(E_{\lambda_0})=E_\lambda$ of diameter $\delta \operatorname{diam}(f_\lambda \circ f_{\lambda_0}^{-1}(B))$, for some $\delta>0$ depending only on $p$. We may assume $\frac{1}{2}\delta C2^{1+\theta'} \leq 1$. Denote the collection of all such disks by $\mathcal{B}_n'$. For all $n \in \mN$ and all $B' \in \mathcal{B}_n'$ the radius of $B'$ is at most
$$\frac{1}{2} \delta C(2c|f_\lambda \circ f_{\lambda_0}^{-1}(y_n)-f_\lambda \circ f_{\lambda_0}^{-1}(x_n)|)^{1+\theta'} \leq (c|f_\lambda \circ f_{\lambda_0}^{-1}(y_n)-f_\lambda \circ f_{\lambda_0}^{-1}(x_n)|)^{1+\theta'}.$$

Now, note that each $B \in \mathcal{B}_n$ is centered at a point belonging to the unbounded set $f_{\lambda_0}(\mR)$, and thus
$$
\operatorname{diam}(B \cap f_{\lambda_0}(\mR)) \geq \frac{1}{2} \operatorname{diam}(B).
$$
We obtain
\begin{eqnarray*}
\sum_{B' \in \mathcal{B}_n'} \left( \frac{\operatorname{diam}(B')}{2c|f_\lambda \circ f_{\lambda_0}^{-1}(y_n)-f_\lambda \circ f_{\lambda_0}^{-1}(x_n)|}\right)^{b}
&=& \delta^b \sum_{B \in \mathcal{B}_n} \left( \frac{\operatorname{diam}(f_\lambda \circ f_{\lambda_0}^{-1}(B))}{2c|f_\lambda \circ f_{\lambda_0}^{-1}(y_n)-f_\lambda \circ f_{\lambda_0}^{-1}(x_n)|}\right)^{b}\\
&\geq& \delta^b \sum_{B \in \mathcal{B}_n} \left( \frac{\operatorname{diam}(f_\lambda(f_{\lambda_0}^{-1}(B) \cap \mR))}{2c|f_\lambda \circ f_{\lambda_0}^{-1}(y_n)-f_\lambda \circ f_{\lambda_0}^{-1}(x_n)|}\right)^{b}\\
&\geq& \delta^b \sum_{B \in \mathcal{B}_n} \left( \frac{\operatorname{diam}(B \cap f_{\lambda_0}(\mR))}{4cR_n}\right)^{d_n}\\
&\geq& \delta^b \sum_{B \in \mathcal{B}_n} \left( \frac{\operatorname{diam}(B)}{8cR_n}\right)^{d_n}\\
&=& \delta^b \sum_{B \in \mathcal{B}_n} \frac{1}{(4c)^{d_n}}\left( \frac{\operatorname{diam}(B)}{2R_n}\right)^{d_n}\\
&\geq& \delta^b \frac{1}{(4c)^{h_{E_{\lambda_0}}(\theta)}} \sum_{B \in \mathcal{B}_n} \left( \frac{\operatorname{diam}(B)}{2R_n}\right)^{d_n}\\
&>& \left(\delta^b\frac{1}{(4c)^{h_{E_{\lambda_0}}(\theta)}}\right) n.
\end{eqnarray*}
where we used the facts that $\delta>0$, $2c>1$, $d_n<h_{E_{\lambda_0}}(\theta)$ for all $n \in \mN$ and Equation (\ref{eq11sym}).

It then follows from the above that $u(\lambda) \geq 1/h_{E_{\lambda}}(\theta')$, exactly as in the proof of Lemma \ref{lemma2}. The next step is to let $\theta \to 0$. Let $(\theta_n)$ be a decreasing sequence in $(0,1)$ with $\theta_n \to 0$ as $n \to \infty$. For each $n \in \mN$ large enough, the above construction gives an inf-sym-harmonic function $u_n$ on $\mD(0,p)$ such that $u_n(\lambda_0)=1/h_{E_{\lambda_0}}(\theta_n)$ and $u_n(\lambda) \geq 1/h_{E_{\lambda}}(\theta_n')$ for all $\lambda \in \mD(0,p)$, where $\theta_n':=\theta_n/K^4$. A subsequence of $(u_n)$ converges locally uniformly to an inf-sym-harmonic function $u$ on $\mD(0,p)$. Since both $\theta_n$ and $\theta_n'$ tend to $0$ as $n \to \infty$, we obtain
$$u(\lambda_0)=\frac{1}{\dim_{qA} (E_{\lambda_0})} \qquad \mbox{and} \qquad u(\lambda) \geq \frac{1}{\dim_{qA} (E_{\lambda})} \qquad (\lambda \in \mD(0,p))$$
as required.

Finally, to complete the proof, let $(p_n)$ be an increasing sequence in $(|\lambda_0|,1)$ with $p_n \to 1$ as $n \to \infty$. For each $n$ the above construction gives an inf-sym-harmonic function $u_n$ on $\mD(0,p_n)$ such that $u_n(\lambda_0)=1/\dim_{qA} (E_{\lambda_0})$ and $u_n(\lambda) \geq 1/\dim_{qA} (E_{\lambda})$ for all $\lambda \in \mD(0,p_n)$. A subsequence of $(u_n)$ converges locally uniformly to an inf-sym-harmonic function $u$ on $\mD$, and clearly
$$u(\lambda_0)=\frac{1}{\dim_{qA} (E_{\lambda_0})} \qquad \mbox{and} \qquad u(\lambda) \geq \frac{1}{\dim_{qA} (E_{\lambda})} \qquad (\lambda \in \mD)$$
as required.

\end{proof}

We can now proceed with the proof of Theorem \ref{theorem:main2}.

\begin{proof}
Let $f: \mD \times \hC \to \hC$ be a symmetric holomorphic motion and let $E \subset \mR$ be bounded. If $\dim_{qA} (E_{\lambda_0})>0$ for some $\lambda_0 \in \mD$, then $\dim_{qA} (E_{\lambda})>0$ for all $\lambda \in \mD$, by Lemma \ref{lemma2sym}. In this case, for each $\lambda_0 \in \mD$ Lemma \ref{lemma2sym} yields an inf-sym-harmonic function $u_{\lambda_0}$ on $\mD$ such that
$$u_{\lambda_0}(\lambda_0)=\frac{1}{\dim_{qA}(E_{\lambda_0})} \qquad \mbox{and} \qquad u_{\lambda_0}(\lambda) \geq \frac{1}{\dim_{qA}(E_{\lambda})} \qquad (\lambda \in \mD).$$
It easily follows that
$$\frac{1}{\dim_{qA}(E_{\lambda})} = \inf_{\lambda_0 \in \mD} u_{\lambda_0}(\lambda)$$
which is evidently inf-sym-harmonic on $\mD$, as required.
\end{proof}

\section{Applications to quasiconformal mappings}
\label{sec5}

In this section we prove Corollary \ref{corollary:assouadqc1} and Corollary \ref{corollary:assouadqc2}, following the ideas of \cite[\S10]{FRY23}.

To prove Corollary \ref{corollary:assouadqc1}, let $F:\mC\to\mC$ be a $k$-quasiconformal mapping and let $E$ be a bounded subset of $\mC$ such that $\dim_{qA}(E)>0$. We have to show that
\[
\frac{1}{K}\Bigl(\frac{1}{\dim_{qA} E}-\frac{1}{2}\Bigr)
\le \Bigl(\frac{1}{\dim_{qA} F(E)}-\frac{1}{2}\Bigr)\le
K\Bigl(\frac{1}{\dim_{qA} E}-\frac{1}{2}\Bigr),
\]
where $K:=(1+k)/(1-k)$.

\begin{proof}
By Theorem \ref{T:embedding}, there exists a holomorphic motion $f:\mD \times \hC \to \hC$ such that $f_{k}=F$. For $\lambda\in\mD$, set $E_\lambda:=f_\lambda(E)$. By Theorem \ref{theorem:main1}, the function $\lambda \mapsto 1/\operatorname{dim}_{qA}(E_\lambda)$ is an inf-harmonic function on $\mD$. It is also bounded from below by $1/2$, and thus $\lambda \mapsto 1/\operatorname{dim}_{qA}(E_\lambda) - 1/2$ is also inf-harmonic on $\mD$. By Proposition \ref{P:infharmonic}, it satisfies Harnack's inequality, so, for all $\lambda \in \mD$, we have
\[
\frac{1-|\lambda|}{1+|\lambda|}\Bigl(\frac{1}{\operatorname{dim}_{qA}(E_0)}-\frac{1}{2}\Bigr)
\le \Bigl(\frac{1}{\operatorname{dim}_{qA}(E_\lambda)}-\frac{1}{2}\Bigr)\le
\frac{1+|\lambda|}{1-|\lambda|}\Bigl(\frac{1}{\operatorname{dim}_{qA}(E_0)}-\frac{1}{2}\Bigr).
\]
But $E_0=E$ and setting $\lambda=k$ gives the desired inequalities.
\end{proof}

We now prove Corollary \ref{corollary:assouadqc2}. Since quasi-Assouad dimension is finitely stable, it suffices to show that $\operatorname{dim}_{qA}(\Gamma) \leq 1+k^2$ whenever $\Gamma$ is the image of an interval $[a,b] \subset \mR$ under a $k$-quasiconformal mapping. We will need the following two lemmas due to Smirnov \cite[Theorem 4]{Sm10}.
\begin{lemma}
\label{symbeltrami}
There is a holomorphic motion $f:\mD \times \hC \to \hC$ which is symmetric with respect to the real line in the sense of Theorem \ref{theorem:main2} such that $\Gamma=f_{ik}([a,b])$.
\end{lemma}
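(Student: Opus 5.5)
The plan is to build the motion from a Beltrami coefficient on $\hC$ that is \emph{antisymmetric} with respect to the real line, meaning $\mu(\overline{z})=-\overline{\mu(z)}$ a.e., with $\|\mu\|_\infty\le k$. If such a $\mu$ exists and the normalized solution $G$ of the Beltrami equation with coefficient $\mu$ satisfies $G([a,b])=\Gamma$ (up to a M\"obius normalization fixing $0,1,\infty$), the rest is formal.

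For $\lambda\in\mD$ set $\mu_\lambda:=\lambda\mu/(ik)$. Then $\|\mu_\lambda\|_\infty\le|\lambda|$ and $\lambda\mapsto\mu_\lambda$ is holomorphic (indeed linear) into $L^\infty$. By Theorem \ref{T:MRMT} and Theorem \ref{T:motions}, the normalized solutions $f_\lambda$ form a holomorphic motion with $f_{ik}=G$, since $\mu_{ik}=\mu$.

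For the symmetry, consider $g_\lambda(z):=\overline{f_{\overline\lambda}(\overline z)}$. This is an orientation-preserving quasiconformal map fixing $0,1,\infty$. A direct Wirtinger computation gives
\[
\mu_{g_\lambda}(z)=\overline{\mu_{\overline\lambda}(\overline z)}=\frac{\lambda\,\overline{\mu(\overline z)}}{\overline{ik}}=\frac{\lambda\,\overline{\mu(\overline z)}}{-ik}=\frac{\lambda\,\mu(z)}{ik}=\mu_\lambda(z),
\]
where the last step uses antisymmetry. Uniqueness in Theorem \ref{T:MRMT} then gives $g_\lambda=f_\lambda$, which is exactly the symmetry required in Theorem \ref{theorem:main2}. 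This part is routine.

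The real content is therefore the following representation statement: every $k$-quasiarc $\Gamma=F([a,b])$ lies in, or can be taken to be, the image of a real interval under a quasiconformal map whose dilatation is antisymmetric with norm at most $k$. I would proceed in two steps.

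\emph{Step 1.} Replace $F$ by a map that is conformal in the lower half-plane and has the same dilatation bound. Let $\Omega$ be the complementary component of the quasicircle $F(\hat{\mR})$ corresponding to the lower half-plane, and compose with a Riemann map of $\Omega$. This is conformal welding; the image of $[a,b]$ still covers $\Gamma$, and since quasi-Assouad dimension is monotone we are free to enlarge $\Gamma$.

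\emph{Step 2.} Symmetrize. Given $\nu$ supported on the upper half-plane $\mathbb{H}$, let $\mu:=\nu$ on $\mathbb{H}$ and $\mu(z):=-\overline{\nu(\overline z)}$ on the lower half-plane. Then show, following Smirnov's argument in \cite[Theorem 4]{Sm10}, that the image of $\mR$ under the solution $G$ for $\mu$ is the original curve up to a M\"obius map, or at least a $k$-quasicircle of the required type. The tool is a welding comparison: on $\mathbb{H}$, $G$ and the Step 1 map differ by a conformal map, and the welding homeomorphisms of the two sides must match.

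This welding identification is where I expect the main obstacle. The naive reflection changes the dilatation on the lower side from $0$ to $-\overline{\nu(\overline z)}$, so the images of $\mR$ are not obviously the same curve. If a direct identification fails, I would instead run a fixed-point argument. Parametrize antisymmetric coefficients on $\mathbb{H}$ by $\nu$, and seek $\nu$ such that the welding of $G_\nu(\mR)$ equals the welding of $F(\mR)$. Uniqueness of conformal welding for quasisymmetric homeomorphisms would then force $G_\nu(\mR)=F(\mR)$ up to M\"obius normalization, with $\|\nu\|_\infty\le k$ controlled through Smirnov's estimate.
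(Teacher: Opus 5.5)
\emph{There is a genuine gap.}

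Your first two paragraphs are correct. If $\Phi$ is $k$-quasiconformal with $\Phi(\mR\cup\{\infty\})=F(\mR\cup\{\infty\})$ and $\mu_\Phi(\overline z)=-\overline{\mu_\Phi(z)}$, then $\mu_\lambda=\lambda\mu_\Phi/(ik)$ gives the required symmetric motion. This is the standard way the lemma is deduced from Smirnov's theorem that a $k$-quasicircle is the image of $\mR\cup\{\infty\}$ under a $k$-quasiconformal map with antisymmetric dilatation \cite[Theorem 4]{Sm10}; the paper gives no proof beyond citing this.

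That antisymmetrization is the entire content of the lemma, and your argument for it does not work. Step 1 is false as stated. A map conformal on one half-plane and sending $\mR\cup\{\infty\}$ onto a $k$-quasicircle is in general only $2k/(1+k^2)$-quasiconformal, because the natural extension of the welding $w=\phi_-^{-1}\circ\phi_+$ is a composition of two $k$-quasiconformal maps. The loss is genuine: for two rays meeting at angle $\alpha\pi$, $0<\alpha<1$, a map quasiconformal on both sides achieves $K=\sqrt{(2-\alpha)/\alpha}$, while one conformal on either side needs $K\ge(2-\alpha)/\alpha$.

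Step 2 fails as well. If $\Phi$ has coefficient $\nu$ on $\mathbb{H}^+$ and $-\overline{\nu(\overline z)}$ on $\mathbb{H}^-$, the welding of $\Phi(\mR\cup\{\infty\})$ is $\tilde g\circ g^{-1}$, where $g,\tilde g$ are self-maps of $\mathbb{H}^+$ with $\mu_g=\nu$ and $\mu_{\tilde g}=-\nu$. That is not the welding $g^{-1}$ of the one-sided map. Moreover, with a correct Step 1 its norm would be $2k/(1+k^2)$, not $k$. Your fallback only restates the problem (find $\nu$ with $\|\nu\|_\infty\le k$ and $\tilde g\circ g^{-1}=w$), and appealing to ``Smirnov's estimate'' for the norm bound is circular.

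The missing idea is to split an extension of $w$ at the pointwise hyperbolic midpoint; this is a linear Beltrami problem and needs no fixed point.
\begin{itemize}
\item Set $\sigma(z)=\overline z$, $\Omega_\pm=F(\mathbb{H}^\pm)$, and let $\phi_\pm:\mathbb{H}^\pm\to\Omega_\pm$ be conformal. Then $\mathcal H:=(\sigma\circ\phi_-^{-1}\circ F\circ\sigma)\circ(F^{-1}\circ\phi_+)$ is a self-map of $\mathbb{H}^+$ equal to $w$ on $\mR$, and $\kappa:=\mu_{\mathcal H}$ satisfies $\|\kappa\|_\infty\le 2k/(1+k^2)$.
\item Put $\beta:=\kappa/(1+\sqrt{1-|\kappa|^2})$. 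Then $\kappa=2\beta/(1+|\beta|^2)$ and $\|\beta\|_\infty\le k$. Let $G$ be a quasiconformal self-map of $\mathbb{H}^+$ with $\mu_G=\beta$.
\item Since $(\kappa-\beta)/(1-\overline\beta\kappa)=\beta$, the composition formula gives $\mu_{\mathcal H\circ G^{-1}}=-\mu_{G^{-1}}$.
\item Define $\Phi:=\phi_+\circ G^{-1}$ on $\overline{\mathbb{H}^+}$ and $\Phi:=\phi_-\circ\sigma\circ\mathcal H\circ G^{-1}\circ\sigma$ on $\mathbb{H}^-$. The two definitions agree on $\mR$ because $\mathcal H=w$ there. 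So $\Phi$ is a $k$-quasiconformal homeomorphism of $\hC$ mapping $\mathbb{H}^\pm$ onto $\Omega_\pm$.
\item For $z\in\mathbb{H}^-$ we get $\mu_\Phi(z)=\overline{\mu_{\mathcal H\circ G^{-1}}(\overline z)}=-\overline{\mu_\Phi(\overline z)}$, so $\mu_\Phi$ is antisymmetric.
\end{itemize}
After precomposing with a real M\"obius map, which preserves antisymmetry, your reduction applies. The interval $[a,b]$ is replaced by $\Phi^{-1}(\Gamma)$, which is harmless for Corollary \ref{corollary:assouadqc2}.
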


\begin{lemma}
\label{L:HarnackSym}
Let $v:\mathbb{D} \to [0,\infty)$ be an inf-sym-harmonic function. Then
\[
\frac{1-y^2}{1+y^2}v(0) \le v(iy) \le \frac{1+y^2}{1-y^2} v(0) \qquad (y \in (-1,1)).
\]
\end{lemma}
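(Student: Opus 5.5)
The plan is to reduce to the ordinary Harnack inequality for a single positive symmetric harmonic function, since an infimum of functions that all satisfy the two-sided bound at the pair $(0,iy)$ satisfies the same bound. Write $v=\inf_{h\in\mathcal H}h$ with each $h$ positive, harmonic and symmetric. Suppose we know, for every positive symmetric harmonic $h$,
\[
\frac{1-y^2}{1+y^2}h(0)\le h(iy)\le\frac{1+y^2}{1-y^2}h(0).
\]
Taking the infimum over $h\in\mathcal H$ then gives $v(iy)\ge \frac{1-y^2}{1+y^2}v(0)$. Also $v(iy)\le h(iy)\le \frac{1+y^2}{1-y^2}h(0)$ for every $h$, and taking the infimum yields the upper bound. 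So everything reduces to a single $h$.

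For a single positive harmonic $h$ on $\mD$, I would use the Herglotz representation $h(\lambda)=\int_{\partial\mD}P(\lambda,\zeta)\,d\mu(\zeta)$, where $\mu\ge0$, $\mu(\partial\mD)=h(0)$ and $P(\lambda,\zeta)=\frac{1-|\lambda|^2}{|\zeta-\lambda|^2}$. Symmetry $h(\lambda)=h(\bar\lambda)$ together with uniqueness of the representing measure shows $\mu$ is invariant under $\zeta\mapsto\bar\zeta$. We can therefore replace the kernel by its symmetrization
\[
Q(\lambda,\zeta)=\tfrac12\bigl(P(\lambda,\zeta)+P(\lambda,\bar\zeta)\bigr),
\]
so that $h(iy)=\int Q(iy,\zeta)\,d\mu(\zeta)$. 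It then suffices to prove the pointwise bound
\[
\frac{1-y^2}{1+y^2}\le Q(iy,\zeta)\le\frac{1+y^2}{1-y^2}\qquad(\zeta\in\partial\mD).
\]

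This is an explicit computation. With $\zeta=e^{it}$ we have $|\zeta-iy|^2=1+y^2-2y\sin t$ and $|\bar\zeta-iy|^2=1+y^2+2y\sin t$. Setting $A=1+y^2$ and $B=2y\sin t$, we get
\[
Q(iy,\zeta)=\frac{1-y^2}{2}\Bigl(\frac{1}{A-B}+\frac{1}{A+B}\Bigr)=\frac{(1-y^2)A}{A^2-B^2}.
\]
Since $0\le B^2\le 4y^2$ and $A^2-4y^2=(1-y^2)^2$, the expression is minimized at $B=0$, giving $\frac{1-y^2}{1+y^2}$. It is maximized at $B^2=4y^2$, giving $\frac{1+y^2}{1-y^2}$.

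The only delicate point is the symmetry of $\mu$, which follows from the uniqueness of the Herglotz measure (equivalently, because $\mu$ is the weak-$*$ limit of $h(r\zeta)\,|d\zeta|/2\pi$ as $r\to1$). Everything else is routine.
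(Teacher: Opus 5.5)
Your proof is correct. The reduction from $v=\inf_{h}h$ to a single positive symmetric harmonic $h$ is valid, and the symmetrized Poisson kernel $Q(iy,e^{it})=(1-y^2)(1+y^2)/\bigl((1+y^2)^2-4y^2\sin^2 t\bigr)$ gives exactly the two bounds. The paper does not prove the lemma itself but cites \cite[\S10.4]{FRY23} (going back to Smirnov), and your Herglotz argument is the standard proof of this fact.

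One small simplification: you do not need uniqueness of the representing measure. Since $P(\bar\lambda,\zeta)=P(\lambda,\bar\zeta)$ and $h(\lambda)=\tfrac12\bigl(h(\lambda)+h(\bar\lambda)\bigr)$, you get $h(\lambda)=\int Q(\lambda,\zeta)\,d\mu(\zeta)$ directly for the original $\mu$.
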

See \cite[\S10.4]{FRY23} for the proofs.

We can now prove Corollary \ref{corollary:assouadqc2}.

\begin{proof}
By Lemma \ref{symbeltrami}, we can write $\Gamma=f_{ik}(E)$ where $E:=[a,b]$ and $f:\mD \times \hC \to \hC$ is a symmetric holomorphic motion. By Theorem \ref{theorem:main2}, the function $u(\lambda):=1/\operatorname{dim}_{qA}(E_\lambda)$ is inf-sym-harmonic on $\mD$, and so is $v:=u-1/2$. Lemma \ref{L:HarnackSym} then yields

\[
v(ik) \ge \frac{1-k^2}{1+k^2}v(0) = \frac{1}{2} \frac{1-k^2}{1+k^2},
\]
where we used the fact that $\operatorname{dim}_{qA}([a,b])=1$. On the other hand, we have
\[
v(ik) = \frac{1}{\operatorname{dim}_{qA}(f_{ik}(E))} - \frac{1}{2} = \frac{1}{\operatorname{dim}_{qA}(\Gamma)} - \frac{1}{2},
\]
and hence we obtain
\[
\operatorname{dim}_{qA}(\Gamma) \le 1 + k^2,
\]
as required.

\end{proof}

\bibliographystyle{amsplain}
\bibliography{biblio}

\end{document}